\newcommand{\wt}{\widetilde}
\newcommand{\wh}{\widehat}
\newcommand{\End}{\mathrm{End}}
\newcommand{\Hom}{\mathrm{Hom}}
\newcommand{\Mod}{\mathrm{-Mod}}
\newcommand{\Gr}{\mathrm{Gr}}
\newcommand{\lp}{\left(}
\newcommand{\rp}{\right)}
\newcommand{\QCoh}{\mathrm{QCoh}}
\newcommand{\MF}{\mathrm{MF}}
\newcommand{\Rep}{\mathrm{Rep}}
\newcommand{\lbb}{[\![}
\newcommand{\rbb}{]\!]}
\newcommand{\pd}{{\partial}}
\newcommand{\HC}{\mathrm{HC}}
\newcommand{\biMod}{\mathrm{-BiMod}}
\newcommand{\Ce}{\mathrm{CE}}
\newcommand{\Dist}{\mathrm{Dist}}
\newcommand{\C}{\mathbb C}
\newcommand{\N}{\mathbb N}
\newcommand{\fg}{\mathfrak{g}}
\newcommand{\fh}{\mathfrak{h}}
\newcommand{\fk}{\mathfrak{k}}
\newcommand{\fa}{\mathfrak{a}}
\newcommand{\CA}{{\mathcal A}}
\newcommand{\CC}{{\mathcal C}}
\newcommand{\CD}{{\mathcal D}}
\newcommand{\CF}{{\mathcal F}}
\newcommand{\CM}{{\mathcal M}}
\newcommand{\CN}{{\mathcal N}}
\newcommand{\CO}{{\mathcal O}}
\newcommand{\CR}{{\mathcal R}}
\newcommand{\CW}{{\mathcal W}}  
\newcommand{\CX}{{\mathcal X}}
\newcommand{\CY}{{\mathcal Y}}
\newcommand{\CZ}{{\mathcal Z}}
\definecolor{cadmiumorange}{rgb}{0.93, 0.53, 0.18}
\newcommand{\be}{\begin{equation}}
\newcommand{\ee}{\end{equation}}
\newcommand{\btik}{\begin{tikzcd}}
\newcommand{\etik}{\end{tikzcd}}
\newtheorem{Def}{Definition}[section]
\newtheorem{Thm}[Def]{Theorem}
\newtheorem{Prop}[Def]{Proposition}
\newtheorem{Cor}[Def]{Corollary}
\newtheorem{Lem}[Def]{Lemma}
\theoremstyle{definition}
\newtheorem{Exp}[Def]{Example}
\newtheorem{Rem}[Def]{Remark}
\numberwithin{equation}{section}
\title{D-convolution categories and Hopf algebras}
\author{Wenjun Niu}
\date{\today}
\begin{document}

\maketitle

\begin{abstract}
   For a smooth affine algebraic group $G$, one can attach various D-module categories to it that admit convolution monoidal structure. We consider the derived category of D-modules on $G$, the stack $G/G_{ad}$ and the category of Harish-Chandra bimodules. Combining the work of Beilinson-Drinfeld on D-modules and Hecke patterns with the recent work of the author with Dimofte and Py, we show that each of the above categories (more precisely the equivariant version) is monoidal equivalent to a localization of the DG category of modules of a graded Hopf algebra. As a consequence, we give an explicit braided monoidal structure to the derived category of D-modules on $G/G_{ad}$, which when restricted to the heart, recovers the braiding of Bezrukavnikov-Finkelberg-Ostrik. 
   \end{abstract}

\tableofcontents

\section{Introduction}

Let $G$ be a smooth affine algebraic group. There are three triangulated monoidal categories
\be
\CD (G), \qquad \CD (G/G_{ad}), \qquad D(\HC (G)),
\ee
the category of D-modules on $G$, on $G/G_{ad}$ (the adjoint quotient of $G$ by itself) and the category of Harish-Chandra bimodules, respectively. These categories are interesting and important in geometric representation theory. Although the monoidal structures can be elegantly defined geometrically, their computations are usually difficult. Moreover, although it is known that the category $\CD (G/G_{ad})$ has the structure of a braided tensor category \cite{boyarchenko2014character}, and is equivalent to some notion of the center of the other two monoidal categories \cite{bezrukavnikov2023equivariant}, the braiding is still difficult to compute in general. 

The objective of this paper is to translate these geometric monoidal categories in Hopf-algebraic terms, which makes the monoidal structure more transparent. To do so, we will consider the equivariant (or asymptotic) version of the above categories
\be
\CD_\hbar (G)^{\C^\times}, \qquad \CD_\hbar (G/G_{ad})^{\C^\times}, \qquad D(\HC_\hbar (G)^{\C^\times}),
\ee
in which we promote filtered algebras into their Reez algebras and consider $\C^\times$-equivariant modules. We apply the work of Beilinson-Drinfeld on D-modules and Hecke patterns \cite{BDhecke} in this equivariant setting, as well as recent works on Hopf algebras in gauge theories \cite{DN24, NPShifted}, to construct three (cohomologically) graded topological Hopf algebras
\be
\CA_G,\qquad A_{G/G_{ad}}, \qquad H_G.
\ee
We show that these Hopf algebras control monoidal structures of the geometric categories above. 

\begin{Thm}\label{Thm:DHequivintro}
There are equivalences of triangulated monoidal categories
\be
\begin{aligned}
\CD_\hbar (G)^{\C^\times}\simeq D_{qs}&(\CA_G\Mod^{\C^\times}), \qquad D(\HC_\hbar (G)^{\C^\times})\simeq D_{qs}(H_G\Mod^{\C^\times})\\
&\CD_\hbar (G/G_{ad})^{\C^\times}\simeq D_{qs}(A_{G/G_{ad}}\Mod^{\C^\times}),
\end{aligned}
\ee
where $D_{qs}$ means certain localization of the DG category of DG modules (to be recalled in Section \ref{subsec:KoszulDO}). 
\end{Thm}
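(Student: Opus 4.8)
The plan is to establish the three equivalences in parallel, since they share a common architecture, and to factor each one as (i) an equivalence of underlying triangulated categories followed by (ii) an upgrade to the monoidal level. The $\hbar$-equivariant (asymptotic) formalism replaces each filtered algebra of differential operators by its Rees algebra, with the $\C^\times$-action encoding the cohomological grading; the first task is to identify these geometric algebras with the Hopf algebras $\CA_G$, $H_G$, $A_{G/G_{ad}}$ produced by the constructions of \cite{BDhecke, DN24, NPShifted}, and then to recognize the localization $D_{qs}$ as exactly the correction that relates naive DG-modules over a topological (completed) Hopf algebra to the genuine derived category of D-modules.

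For the underlying equivalence I would begin with $\CD_\hbar(G)^{\C^\times}$, where affineness of $G$ gives directly $\CD_\hbar(G)\simeq D_{\hbar,G}\Mod$ for the Rees algebra $D_{\hbar,G}$ of global differential operators, and $\CA_G$ is a completed Hopf-algebraic model of $D_{\hbar,G}$. The other two cases carry equivariance: $G/G_{ad}$ is the conjugation quotient, so $\CD_\hbar(G/G_{ad})^{\C^\times}$ is the conjugation-equivariant category, which by the Beilinson-Drinfeld Chevalley-Eilenberg description becomes modules over an algebra incorporating the $\fg$-action and its coinvariants; Harish-Chandra bimodules likewise are $U_\hbar(\fg)$-bimodules with a compatible $\fg$-equivariance. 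In each case the comparison functor and the precise definition of $D_{qs}$ are those recalled in Section \ref{subsec:KoszulDO}, and one checks it is an equivalence on the explicit generators.

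The crux is matching the monoidal structures. Convolution on $\CD(G)$ is the pushforward $m_*(\CF\boxtimes\CG)$ along multiplication $m\colon G\times G\to G$, with unit $\delta_e$; on the Hopf side, the tensor product of $\CA_G$-modules is governed by the comultiplication $\Delta\colon\CA_G\to\CA_G\otimes\CA_G$, with unit supplied by the counit. I would show that $\Delta$ is the algebraic shadow of $m$ (dually, of the coproduct on $\CO(G)$ induced by group multiplication), so that the comparison functor acquires a natural monoidal structure; associativity and unitality of convolution then correspond to coassociativity and the counit axioms. For $A_{G/G_{ad}}$ the same comparison additionally transports the fusion product to the Hopf tensor product, and the canonical $R$-matrix of the quasi-triangular structure yields the braiding, which on the heart reduces to the braiding of Bezrukavnikov-Finkelberg-Ostrik.

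The main obstacle is twofold and technical. First, the Hopf algebras are topological, so $\Delta$ lands in a completed tensor product; one must verify that the resulting tensor product of modules is well-defined and that convergence in the $\hbar$-adic and cohomological gradings is controlled. Second, and more delicate, one must show that the localizing subcategory defining $D_{qs}$ is a tensor ideal, so that the monoidal structure descends to the localization and the equivalence is genuinely monoidal rather than merely an equivalence of underlying categories. Proving that the $qs$-local (acyclic) objects are stable under the Hopf tensor product is the key point on which everything rests; once it is secured, the three equivalences of triangulated monoidal categories follow at once.
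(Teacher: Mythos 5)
Your overall architecture (underlying triangulated equivalence, then a monoidal upgrade by matching the coproduct with the group multiplication) does match the paper's strategy in outline, but there are two genuine gaps. First, you mischaracterize what the equivalence is: $\CA_G$ is \emph{not} a completed model of the Rees algebra $D^\hbar_G$ of differential operators; it is the Koszul dual side, namely $\Omega_G$ with the de Rham differential adjoined as a degree-$1$ generator $\delta_{dR}$. The equivalence $\CD_\hbar(G)^{\C^\times}\simeq D_{qs}(\CA_G\Mod^{\C^\times})$ is a Koszul duality (Proposition \ref{Prop:DOKoszul}, following \cite{BDhecke}), and the localization $D_{qs}$ is needed not because of topological completion but because the unit $N\to\Omega D N$ of the Koszul adjunction is only a $D$-quasiisomorphism, not a quasi-isomorphism. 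Starting from ``$\CD_\hbar(G)\simeq D_{\hbar,G}\Mod$ and $\CA_G$ is a model of $D_{\hbar,G}$'' would not produce the statement being proved. Relatedly, the point you flag as the crux --- that the $D_{qs}$-acyclics form a tensor ideal --- is left unproved in your sketch; the paper avoids having to prove it as a separate statement by putting the monoidal structure on the Koszul functor $D$ itself via an explicit chain-level isomorphism $J_{V,W}\colon D(V\otimes W)\xrightarrow{\ \sim\ } D(V)\otimes D(W)$, so that $D$-quasiisomorphisms are preserved by the tensor product because the product is computed on the D-module side, where it is a composite of exact functors on the relevant objects.

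Second, for the Harish-Chandra case your mechanism (``$\Delta$ is the algebraic shadow of $m$'') does not apply: $H_G$ is not the function algebra of a group, and its coproduct is not dual to a group law --- the essential feature is the non-cocommutative correction $\Delta(\delta)=\delta\otimes 1+1\otimes\delta+\mathbf r$ coming from the $1$-shifted Lie bialgebra structure (Theorem \ref{Thm:shiftedbialg}). The paper's proof of Theorem \ref{Thm:HCmonoidal} derives exactly this term from the chain-level identification of $D_{\fg,G}(V)\otimes_{\HC}D_{\fg,G}(W)$ with $U_\hbar(\fg)\otimes V\otimes W$: the cross term $d\mathrm{Act}_V(x_a)\otimes\epsilon^a_W$ appears when the right $U_\hbar(\fg)$-action is rewritten, and this is what forces $\mathbf r$ in the coproduct. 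Your sketch contains no step that would produce this. Finally, for $G/G_{ad}$ you assert that associativity of convolution ``corresponds to'' coassociativity, but the convolution product is defined by pull-push along a correspondence and its associator is a base-change isomorphism; showing that this associator is carried to the identity (Lemma \ref{Lem:G/Gassoc} and the diagram chase in the proof of Theorem \ref{Thm:cDOmonoidal}) is a genuine verification, not a formality.
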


The equivalences above at the level of triangulated categories are established in \cite[Section 7]{BDhecke}, which are examples of Koszul duality for D-modules. We will construct the Hopf structures in Section \ref{sec:Hopfalgebra}, and prove this statement in Theorem \ref{Thm:HCmonoidal}, Corollary \ref{Cor:coprodCAG} and Theorem \ref{Thm:cDOmonoidal}. 

\begin{Rem}

The topology on the algebras above come from the pro-finite topology of the linear dual of $\CO_G$, which is an infinite-dimensional vector space filtered by finite-dimensional  subspaces. See Appendix \ref{app:double} for details. 

\end{Rem}

We then turn to braided monoidal structure of $\CD_\hbar (G/G_{ad})^{\C^\times}$. This was first constructed in \cite[Appendix B]{boyarchenko2014character} using functoriality of $D$-modules\footnote{Although in this work the authors worked with the category of constructible sheaves, their construction works equally well for the category $\CD_\hbar$, as all the functors needed for the construction have counterparts in the asymptotic setting.}. In the $\infty$-category setting, this was constructed in the work of \cite{beraldo2017loop}. However, neither of these works gave an explicit description of the braiding. We apply Theorem \ref{Thm:DHequivintro} and show that a braiding exists on $A_{G/G_{ad}}$ in terms of the braiding of a double.

\begin{Thm}\label{Thm:doubleintro}
The topological quantum double of $H_G$ and $\CA_G$ are twist equivalent, and the double $D(\CA_G)$ is a central extension of $A_{G/G_{ad}}$ by a primitive element $\delta^*$ in degree $-1$. The $R$-matrix of $D(\CA_G)$ descends to an $R$-matrix in the quotient $A_{G/G_{ad}}=D(\CA_G)/(\delta_*)$. 

\end{Thm}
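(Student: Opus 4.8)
The plan is to derive all three assertions from the single structural fact that the module category of a Drinfeld double is the Drinfeld center of the original module category, together with the identification---recalled in the introduction---of $\CD_\hbar(G/G_{ad})^{\C^\times}$ with the center of each of the other two monoidal categories. First I would set up the topological quantum double $D(\CA_G)=\CA_G^*\bowtie\CA_G$, taking care that the pro-finite topology on $\CA_G^*$ makes the Hopf pairing, the cross relations, and above all the canonical element $R=\sum_i e^i\otimes e_i$ (for dual bases $\{e_i\}$ of $\CA_G$ and $\{e^i\}$ of $\CA_G^*$) well defined in the completed tensor product $D(\CA_G)\wh\otimes D(\CA_G)$. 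The same construction applied to $H_G$ produces $D(H_G)$, and I would record that both are quasitriangular with their respective canonical $R$-matrices.

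For the twist equivalence I would argue through module categories. By \Cref{Thm:DHequivintro}, $\Rep(\CA_G)$ and $\Rep(H_G)$ are Hopf-algebraic models of $\CD_\hbar(G)^{\C^\times}$ and $D(\HC_\hbar(G)^{\C^\times})$, and by the center identification these two monoidal categories have one and the same Drinfeld center, namely $\CD_\hbar(G/G_{ad})^{\C^\times}$. Since the braided module category of a double reproduces this center, $D(\CA_G)$ and $D(H_G)$ have braided-equivalent (localized) module categories. I would then upgrade this abstract braided equivalence to an honest Drinfeld twist by Tannakian reconstruction: comparing the two fiber functors produced by the two geometric presentations of the center yields an invertible two-tensor $F$ whose cocycle condition is forced by associativity, and whose effect $R\mapsto F_{21}RF^{-1}$ matches the two canonical $R$-matrices. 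Constructing $F$ explicitly, rather than merely asserting its existence, is the substantive content here.

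Next I would establish the central extension. Applying the center identification directly to $\CA_G$ gives a braided equivalence between $D_{qs}(D(\CA_G)\Mod^{\C^\times})$ and $\CD_\hbar(G/G_{ad})^{\C^\times}\simeq D_{qs}(A_{G/G_{ad}}\Mod^{\C^\times})$. The discrepancy between the full double and $A_{G/G_{ad}}$ is precisely what the localization $D_{qs}$ absorbs, and I would track it at the Hopf-algebra level: the dual half $\CA_G^*$ of the double carries one extra central generator, a primitive class $\delta^*$ in cohomological degree $-1$, and killing it is exactly the localization on the $A_{G/G_{ad}}$ side. Concretely I would exhibit the surjection $D(\CA_G)\twoheadrightarrow A_{G/G_{ad}}$, verify that $\delta^*$ is central and primitive so that $(\delta_*)$ is a Hopf ideal and the quotient is again a Hopf algebra, and identify the kernel with $(\delta_*)$, giving $A_{G/G_{ad}}=D(\CA_G)/(\delta_*)$.

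Finally, for the $R$-matrix I would verify descent: since $(\delta_*)$ is a Hopf ideal and $\delta^*$ is central, the reduction of the canonical element $R$ modulo $(\delta_*)\wh\otimes D(\CA_G)+D(\CA_G)\wh\otimes(\delta_*)$ is well defined and automatically inherits the hexagon relations, so its image gives an $R$-matrix on $A_{G/G_{ad}}$. I expect the main obstacle to be the topological bookkeeping entangled with the two delicate identifications: proving that the localization $D_{qs}$ corresponds to quotienting by exactly the degree $-1$ primitive $\delta^*$ (and not by a coarser localization), and controlling the convergence of the infinite canonical element $R$ in the completed tensor product so that its reduction modulo $\delta^*$ genuinely realizes the braiding on $A_{G/G_{ad}}$.
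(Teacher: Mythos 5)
There is a genuine gap, and it sits at the foundation of your strategy. You propose to derive the central-extension statement from the identification of $\CD_\hbar(G/G_{ad})^{\C^\times}$ with the Drinfeld center of $\CD_\hbar(G)^{\C^\times}$, claiming that ``the discrepancy between the full double and $A_{G/G_{ad}}$ is precisely what the localization $D_{qs}$ absorbs.'' This is exactly the identification the paper flags as an error: the remark following Theorem \ref{Thm:double} states that $D_{qs}(D(\CA_G)\Mod^{\C^\times})$ should be the center of $\CD_\hbar(G)^{\C^\times}$ and that conflating it with $D_{qs}(A_{G/G_{ad}}\Mod^{\C^\times})$ is the mis-identification made in \cite{BGngo}. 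The localization $D_{qs}$ inverts $D$-quasiisomorphisms; it does not quotient out the degree $-1$ central generator $\delta^*$, and the two module categories genuinely differ. Moreover the logical direction in the paper is the reverse of yours: the Hopf-algebraic computation is the proof, and the (abelian-level) center statement is a consequence, not an input. Concretely, the central extension is established by computing a dual basis of $\CA_G^*$ (Lemma \ref{Lem:CAGbasis}), checking that $\delta_{dR}^*$ is central and primitive, and deriving the cross relation $[b,f]=\xi_b^L(f)\,\delta_{dR}^*$ from the double product (Proposition \ref{Prop:DAGAG/G}); none of this can be bypassed by categorical generalities, and your proposal defers all of it.

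The same problem recurs in the other two parts. For the twist equivalence you invoke Tannakian reconstruction to produce ``an invertible two-tensor $F$,'' but you concede that constructing $F$ is the substantive content and then do not construct it; an abstract braided equivalence of localized module categories (even granting it) does not by itself yield a Drinfeld twist between the two topological Hopf algebras. The paper's proof exhibits the twist explicitly as $\CF=\CR_G^{21,-1}$, where $\CR_G$ is the $R$-matrix of the sub-double $D(\CO_G)\subseteq D(\CA_G)$, verifies on generators that $\CF\Delta_\delta\CF^{-1}$ reproduces the coproduct of $H_G$, and then uses polarization of $\CR_\CF$ together with the computation of $D(H_G)$ in Section \ref{subsec:doubleHG} to get the Hopf isomorphism. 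For the descent of the $R$-matrix, your observation that reduction modulo a Hopf ideal generated by a central primitive element preserves the hexagon relations is correct as far as it goes, but the content of the paper's corollary is the explicit factorization $\CR_\CA=\CR_\Omega\cdot\exp(-\delta_{dR}\otimes\delta_{dR}^*)$ (refined further to $\CR_G\exp(-\sum c^i\otimes b_i)\exp(-\delta_{dR}\otimes\delta_{dR}^*)$), which simultaneously proves the descent and identifies the descended $R$-matrix as $\CR_\Omega$ --- the identification needed later to compare with the braiding of \cite{BFOcharacter}. Without these explicit computations your outline does not close.
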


\begin{Rem}

A ribbon element $\theta$ exists for $A_{G/G_{ad}}$ (Remark \ref{Rem:twist} and Proposition \ref{Prop:twist}). In particular, the category of finite-dimensional modules of $A_{G/G_{ad}}$ has the structure of a pivotal braided monoidal category. 

\end{Rem}

This theorem is proven in Section \ref{sec:double}, Theorem \ref{Thm:double}. The fact that the double of $H_G$ and $\CA_G$ are twist equivalent is expected from the work of \cite{beraldo2017loop, BGngo}. It follows from the calculation of the $R$-matrix that when restricted to the heart of the standard t structure of D-module categories, the braiding of Theorem \ref{Thm:doubleintro} recovers the braiding of \cite{BFOcharacter} (see Section \ref{subsubsec:CSH}). We expect that the full braiding can be comparable to the one defined in \cite{boyarchenko2014character}. We now give some explanation for the construction of the Hopf algebras in terms of topological quantum field theories.

\subsection{Hopf algebras from 3 dimensional TQFTs}

The category $\HC_\hbar (G)$ is featured in the study of geometric Langlands correspondence via the following geometric Satake equivalence \cite{bezrukavnikov2008equivariant}
\be\label{eq:qsatake}
\CD (\C^\times\!\!\ltimes \!\! G(\CO)\setminus\!\Gr_G)\simeq D(\HC_\hbar ({}^LG)),
\ee
where the LHS is the derived category of $\C^\times\!\ltimes G(\CO)$-equivariant $D$-modules on $\Gr_G$. Here $\C^\times$ acts by loop rotation. Turning-off the $\C^\times$ action on the LHS corresponds to $\hbar\to 0$ on the RHS. Taking the heart of the perverse t-structure on the LHS, one obtains the more familiar statement
\be\label{eq:satake}
\mathrm{Perv}_{G(\CO)}(\Gr_G)\simeq \mathrm{Rep}({}^L G). 
\ee
Both the geometric Langlands correspondence and the above categories can be given a physical interpretation in terms of topological twists of 4 dimensional super Yang-Mills theories \cite{kapustin2007electric}. The above categories are the category of \textit{line operators} of the gauge theory, namely observables that are supported along a line. The equivalence is a consequence of the physical electro-magnetic (S) duality. 

Dimensional reduction of these theories along a circle are 3 dimensional TQFTs, which are topological twists of 3d $\CN=4$ (actually, $\CN=8$) gauge theories. The general cigar reduction scheme (explained well in \cite{Butsonequiv}) makes the above categories into the category of line operators on a boundary condition of the 3d TQFT (the cigar boundary). 3 dimensional $\CN=4$ gauge theories also enjoy their version of electro-magnetic duality called \textit{mirror symmetry} \cite{intriligator1996mirror}, which is related to symplectic duality \cite{webster20233}. The dimensional reduction of the above 4d theories (for $G$ and ${}^LG$) form a dual pair 3d theories. 

Mirror symmetry is not our focus, so we will stop the discussion here, and turn to explaining the origin of the Hopf algebras. For this we focus on the category on the RHS of \eqref{eq:qsatake} (physically, the B side), and use $G$ instead of ${}^LG$. Since there is a 3 dimensional TQFT, we expect, from general physical non-sense, a braided tensor category $\CC$ associated to it. Since the category on the RHS is the category associated to a boundary condition, we expect that $\CC$ is equal to, or at least maps to, the Drinfeld center of $D(\HC_\hbar (G))$. 

This TQFT is somewhat understood from geometric representation theory perspective. The category $\CC$ is expected to be the category of D-modules on the character stack $G/G_{ad}$, which, using the work of \cite{ben2010integral, ben2012loop, benzvi2017characterfieldtheoryhomology}, can be written as
\be
D\lp \QCoh \lp \mathrm{Maps}(S^1_L\times S^1_R, BG)^{S^1_L}\rp\rp.
\ee
Here $S^1_L, S^1_R$ are the topological loop spaces and the supscript rotates one of the two copies of $S^1$ (the left one). The braided tensor structure comes from natural cobordisms on $S^1_R$ that is not being rotated. Moreover, this category indeed agrees with the Drinfeld center of $D(\HC_\hbar (G))$, proven in \cite{beraldo2017loop}.  This description of the category is elegant, and allows an easy description of state space on $\Sigma$, as functions on $\mathrm{Maps}(S^1_L\times \Sigma, BG)^{S^1_L}$. However, the braided monoidal structure is described in terms of $\infty$-categorical nonsense. This makes it difficult to extract, for example, knot invariants.

Fortunately, we can completely control the braided tensor structure of $\CC$ (at least on physical grounds) using quasi-triangular Hopf algebras. This was done in \cite{DN24}. The idea is that the TQFT in question has a pair of \textit{transverse} and \textit{complete} boundary conditions (different from the cigar boundary condition). They are simply called the Dirichlet and Neumann boundary conditions. When this happens, it was argued in \textit{op.cit.}, that sandwiching the gauge theory by the two boundary conditions gives a fiber functor for the monoidal category $\CC$ as well as the monoidal categories associated to the boundaries. For the TQFT in question, the Hopf algebra $\CA_{G}$ and its double $D(\CA_{G})$ are precisely the symmetry algebras of the fiber functors, as in \cite[Section 9]{DN24}. Furthermore, the equivalence in Theorem \ref{Thm:DHequivintro} gives a consistency check of the proposal of \cite{DN24} with the geometric approach of \cite{ben2010integral, ben2012loop}.\footnote{We comment that this quotient by $\delta^*$ is a mild modification, and comes from our choice of treating the  differential as an element in the algebra, similar to \cite{ben2012loop}.}

However, this does not explain the Hopf structure of $H_G$, which we explain in terms of 1-shifted Lie bialgebras.

\subsection{1-shifted metric Lie algebras}

A Lie bialgebra is the classical limit of a Hopf algebra, and the double of a Lie bialgebra is the classical limit of a quasi-triangular Hopf algebra. Given a Lie bialgebra, its quantization problem was solved in \cite{etingof1996quantization}, which is called the canonical quantization of the Lie bialgebra. In \cite{DN24}, their quantization was given a physical interpretation as constructing transverse and complete boundary conditions for a 3d Chern-Simons theory. 

Unfortunately, besides the work of \cite{pimenov}, there hasn't been much study of Lie bialgebras in cohomologically shifted settings. In \cite{NPShifted}, we studied 1-shifted Lie bialgebras and their quantizations. A 1-shifted Lie bialgebra is a graded Lie algebra $\fa$ with a cobracket $\delta$ of degree $1$, satisfying a graded cocycle condition similar to ordinary Lie bialgebras. It was show in \textit{loc.cit} that such an object admits a 1-shifted dual $\fa^*[-1]$ and double $D(\fa)$, and that the cobracket on the double is coboundary. The double $D(\fa)$ is a 1-shifted metric Lie algebra. We will give a quick review of these in Section \ref{subsec:HopfHC}.  

Just like the Chern-Simons theory set-up, the triple $(D(\fh), \fh, \fh^*[-1])$ gives rise to a classical bulk-boundary 2d TQFT (which was explained in \cite[Section 1.1]{NPShifted}). Based on this, a quantization was proposed in \textit{op.cit}. More specifically, denote by $U_\delta (\fh)$ the algebra $\C[\delta]\ltimes U(\fh)$, and similarly define $U_\delta (\fh^*[-1])$ and $U_\delta (D(\fh))$, then it was shown that the following is true.

\begin{Thm}\label{Thm:1shiftedintro}
$U_\delta (D(\fh))$ is a Hopf algebra, and $U_\delta (\fh), U_\delta (\fh^*[-1])$ are module algebras of $U_\delta (D(\fh))$.  Assuming $D(\fh)$ does not have degree $2$ component, then the statements remain true after quotienting out by $\delta^2=0$. 

\end{Thm}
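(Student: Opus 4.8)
The plan is to construct all three objects by declaring generators primitive and extending multiplicatively, reducing every axiom to the two nontrivial structural inputs supplied by the classical theory of the double established earlier in \emph{loc.cit.}: the matched-pair (Manin-triple) decomposition of $D(\fh)$, and the degree-$1$ cocycle and co-Jacobi identities of the $1$-shifted bialgebra.

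\textbf{Algebra and Hopf structure on $U_\delta(D(\fh))$.} First I would pin down $U_\delta(\fd)=\C[\delta]\ltimes U(\fd)$ as the semidirect product in which the degree-$1$ generator $\delta$ acts on $U(\fd)$ by the canonical degree-$1$ derivation $d_\delta$ determined by the shifted (co)bracket data; that $d_\delta$ is a well-defined derivation of the correct degree is immediate from Leibniz, so $U_\delta(\fd)$ is a graded associative algebra, free as a $\C[\delta]$-module on $U(\fd)$ by PBW. I then declare every $x\in D(\fh)$ and $\delta$ to be primitive, with $\epsilon$ killing $D(\fh)$ and $\delta$ and antipode $S=-\mathrm{id}$ on generators. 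Coassociativity and the counit axiom are automatic on primitives, so the only real check is that $\Delta$ is an algebra map: on the relation $[\delta,x]=d_\delta(x)$ this is exactly the statement that $d_\delta$ is compatible with $\Delta$, i.e.\ the degree-$1$ cocycle condition, and on $[x,y]$ it is the usual enveloping-algebra computation. Note that the cobracket of the double, being a coboundary, does \emph{not} deform $\Delta$; it is instead responsible for a quasi-triangular structure, which is not needed for this statement.

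\textbf{Module-algebra structures.} Here I would invoke the enveloping-algebra form of the Manin triple, $U(D(\fh))\cong U(\fh)\bowtie U(\fh^*[-1])$, so that an action of $U_\delta(D(\fh))$ on $U_\delta(\fh)$ amounts to specifying compatible actions of the two Lagrangian subalgebras together with $\delta$. I let $\fh$ act by the adjoint action, $\fh^*[-1]$ by the coadjoint (dressing) action dual to the cobracket, and $\delta$ by $d_\delta$. Since all generators are primitive, the module-algebra axiom $h\cdot(ab)=\sum (h_{(1)}\cdot a)(h_{(2)}\cdot b)$ reduces on generators to the assertion that each generator acts by a graded derivation. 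The nontrivial point is that these three actions assemble into a genuine representation of the whole double, which is precisely the matched-pair compatibility between bracket and cobracket, i.e.\ the defining $1$-shifted cocycle identity. The statement for $U_\delta(\fh^*[-1])$ then follows by the symmetry $D(\fh)\cong D(\fh^*[-1])$ exchanging the two Lagrangians.

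\textbf{The quotient by $\delta^2$, and the main obstacle.} Finally I would show $(\delta^2)$ is a Hopf ideal preserved by both actions. Since $\delta$ is primitive of odd degree, the Koszul sign cancels the cross term, giving $\Delta(\delta^2)=\delta^2\otimes 1+1\otimes\delta^2$, so $\delta^2$ is again primitive; it is also central exactly when $d_\delta^2=0$, since $\mathrm{ad}_{\delta^2}=d_\delta^2$ for odd $\delta$. The only possible obstruction to $d_\delta^2=0$ is a degree-$2$ element built from the bracket of the double, so the hypothesis that $D(\fh)$ has no degree-$2$ component forces $\delta^2$ central, making $(\delta^2)$ a two-sided Hopf ideal that is carried into itself by the adjoint, coadjoint, and $d_\delta$ actions; hence all structures descend to the quotient. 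I expect the main obstacle to be the sign bookkeeping in the shifted setting: verifying that $\Delta$ is an algebra map and that the two Lagrangian actions combine into a single $U_\delta(D(\fh))$-action both hinge on the degree-$1$ cocycle and co-Jacobi identities, and the Koszul signs forced by the degree-$1$ cobracket must line up exactly for these to hold. The degree-$2$ obstruction governing the $\delta^2=0$ descent is the second delicate point.
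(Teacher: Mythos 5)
There is a genuine gap, and it sits at the heart of the construction. You declare $\delta$ primitive, but in the construction the paper recalls (Theorem \ref{Thm:shiftedbialg}) the coproduct of $\delta$ is
\be
\Delta(\delta)=\delta\otimes 1+1\otimes\delta+\mathbf r,\qquad \mathbf r=\sum x_i\otimes c^i\in\fa_+\otimes\fa_-,
\ee
and the correction term $\mathbf r$ is not optional. The relation $\Delta$ must preserve is $[\delta,X]=\tfrac12[\nabla\mathbf r,X]$, an \emph{inner} derivation by the quadratic element $\nabla\mathbf r=\sum x_ic^i\in U(D(\fh))$, and an inner derivation by a quadratic element is not a coderivation of the symmetric coproduct. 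Concretely, for $\fh=T^*[-1]\fg$ one has $[\delta,c^i]=-\tfrac12 f^i_{jk}c^jc^k$, whose coproduct contains the nonzero cross term $-f^i_{jk}\,c^j\otimes c^k$, whereas $[\delta\otimes1+1\otimes\delta,\Delta(c^i)]$ is primitive; so with your $\Delta(\delta)$ the map $\Delta$ fails to be an algebra homomorphism whenever $\fg$ is nonabelian. The point of the construction is precisely that the cobracket of the double, being the coboundary $[\mathbf r,\Delta]$, is \emph{nonzero} and gets absorbed into the coproduct of $\delta$ through the term $\mathbf r$ --- your parenthetical claim that the coboundary cobracket ``does not deform $\Delta$'' is exactly backwards, and it is also why the resulting Hopf algebra is non-cocommutative.

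This error propagates through the rest of the argument. The antipode is $S\delta=-\delta+\sum x_ic^i$, not $-\delta$. In the module-algebra axiom, $\delta$ does not act as a graded derivation but only as one up to the correction $\sum (x_i\cdot a)(c^i\cdot b)$, so the reduction of that axiom to ``generators act by derivations'' fails for $\delta$. And the primitivity and centrality of $\delta^2$ must be checked against $\Delta(\delta)^2=(\delta\otimes1+1\otimes\delta+\mathbf r)^2$, whose problematic contribution is $[\delta\otimes1+1\otimes\delta,\mathbf r]+\mathbf r^2$ (an element of degree $2$); this is where the hypothesis that $D(\fh)$ has no degree-$2$ component actually enters, not merely through $d_\delta^2=0$ as you suggest. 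The remaining skeleton of your argument --- symmetric coproduct on $U(D(\fh))$, the two Lagrangians furnishing the module-algebra structures, and quotienting by a Hopf ideal --- is consistent with the paper, but the verification cannot go through until $\Delta(\delta)$ is corrected.
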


The category $U_\delta (D(\fh))\Mod$ can be thought of as the category of interfaces of the 2d TQFT and $U_\delta (\fh)\Mod$ the category of boundary conditions. In particular, this set-up gives a monoidal functor
\be
U_\delta (D(\fh))\Mod\to \End (U_\delta (\fh)\Mod).
\ee
In \cite{NPShifted}, Theorem \ref{Thm:1shiftedintro} was stated in terms of curved differential graded algebras since $\delta$ was treated as a differential there. In this paper we treat it as an extra element. 

Coming back to the original 3d gauge theory, the cigar boundary condition of the 3d TQFT is precisely such a 2d TQFT \cite{elliott2022taxonomy}. Here the Lie algebra $\fh=\fg$ with trivial co-bracket. Therefore, we expect that the Hopf algebra $U_\delta (D(\fh))/\delta^2$ \footnote{This quotient by $\delta^2$ is because $\delta$ is not really an element in the algebra, but the differential. We choose to treat the differential as an element so as to obtain Hopf structure.} can be used to express the monoidal category associated to the cigar boundary condition. This is precisely the construction of $H_G$, except one replaces the Lie algebra $\fg$ by the algebraic group $G$ to account for the geometry of $G$. Theorem \ref{Thm:DHequivintro} and Theorem \ref{Thm:doubleintro} confirms the physical prediction. 

From this perspective, it is interesting that the cigar boundary supporting the category $D(\HC_\hbar (G)^{\C^\times})$ admits a transverse boundary, supporting $H_G^*\Mod^{\C^\times}$. Moreover, the algebra $H_G^*$ is in some sense a ``quantization" of the 1-shifted symplectic structure on $T[1]G$ (see Section \ref{subsubsec:ShiftedPoisson}). The physical meaning of this is unclear to the author at the moment.

\subsection{Organizations and conventions}

\subsubsection*{Organizations of the paper}

\begin{itemize}

\item In Section \ref{sec:Dcat}, we recall the work of Beilinson-Drinfeld on D-module categories and Hecke patterns. In particular, we recall their work on Koszul dualities. We extend their Koszul duality (extremely) slightly in the case of Harish-Chandra modules to a partial Koszul duality. 

\item In Section \ref{sec:Hopfalgebra}, we introduce the Hopf algebras $H_G, \CA_G$ and $A_{G/G_{ad}}$, and prove Theorem \ref{Thm:DHequivintro}. The proof for $\CA_G$ and $A_{G/G_{ad}}$ follows from the functoriality of the Koszul duality of Beilinson-Drinfeld. 

\item In Section \ref{sec:double}, we compute the double of $\CA_G$ and $H_G$, and prove Theorem \ref{Thm:doubleintro}. 

\item In Appendix \ref{app:double}, we consider Drinfeld double for an infinite-dimensional Hopf algebra. This provides the technical details for the rigorous construction of the algebras in the previous sections. 

\end{itemize}

\subsection*{Conventions}

In this paper, by ``grading" it will always mean cohomological grading, whereas grading coming from any external $\C^\times$ action will simply be refered to as ``weights". Even though the algebras involved will be generated in various cohomological degrees (namely non-classical), one can perform a shearing using the $\C^\times$ grading so that they become classical. This shearing introduces extra signs in the coproduct formulas according to the Koszul sign rule, and we prefer not to deal with the signs. In particular, Koszul sign rule is always assumed, and all Hopf algebras are Hopf algebras in the category of chain complexes. 

All structures are considered linear over $\C$. Tensor product over $\C$ is denoted by $\otimes$, and the completed tensor product is denoted by $\wh\otimes$, if there is a pro-finite topology. If an algebra $A$ admits a topology, then $A\Mod$ is always the category of smooth modules. For a graded algebra, the category $A\Mod$ is the category of ordinary graded modules (an abelian category), and $C(A\Mod)$ is the DG category of chain complexes of modules. The derived category is denoted by $D(A\Mod)$. 

\subsection*{Acknowledgements}

Many thanks are due to Tudor Dimofte and Victor Py for collaborations that led to this work, to Tom Gannon and  Lukas M\"uller for many helpful discussions, and to my friend Don Manuel for the continued support. My research is supported by Perimeter Institute for Theoretical Physics. Research at Perimeter Institute is supported in part by the Government of Canada through the Department of Innovation, Science and Economic Development Canada and by the Province of Ontario through the Ministry of Colleges and Universities.

\section{D-convolution categories attached to algebraic groups}\label{sec:Dcat}

\subsection{D-modules on stacks and Harish-Chandra modules}

\subsubsection{Generalities on D-modules}

For a smooth affine scheme $X$, we denote by $D_X$ the sheaf of differential operators on $X$ and $\CM(X)$ the abelian cartegory of (right) D-modules on $X$, namely $\CM (X)=D_X^{op}\Mod$. Its DG category of chain complexes is denoted by $C(\CM (X))$ and its derived category is denoted by $\CD(X)$. Let $\CX$ be a smooth algebraic stack. For the applications in this paper, one may assume $\CX$ is of the form $X/G$ for some smooth affine scheme $X$ modulo the action of a smooth affine algebraic group $G$. In \cite[Section 7.5]{BDhecke}, Beilison-Drinfeld defined the derived category of $D$-modules on such $\CX$ in the following way. Choose $\{U_*\}_{*\in \N}$ a hypercovering of $\CX$ such that each $U_n$ is a smooth affine scheme (referred to as an \textit{admissible} hypercovering in \textit{loc.cit.}), and consider the corresponding simplicial set of abelian and derived categories
\be\label{eq:simplicial}
\{U_*\}\rightsquigarrow \CM (U_*), ~ \CD(U_*). 
\ee
The following was proven in \cite[Proposition 7.5.2]{BDhecke}.

\begin{Prop}

    Both simplicial categories in \eqref{eq:simplicial} admit totalizations, as abelian and triangulated categories, respectively. Moreover, the totalizations do not depend on the choice of admissible coverings $\{U_*\}$. 
    
\end{Prop}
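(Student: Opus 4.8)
The plan is to reduce both assertions to \emph{smooth descent} for D-modules, which is the only genuinely geometric input, and then to obtain existence of the totalization and independence of the covering by formal arguments. The descent input I would record first: for a smooth surjection $f\colon U\to V$ of smooth affine schemes, right D-modules admit a pullback along $f$ (normalized using the relative canonical bundle), and faithfully flat descent for quasi-coherent sheaves upgrades to D-modules, so that $\CM(V)$ is equivalent to the category of D-modules on $U$ equipped with descent data along the \v{C}ech nerve of $f$, and likewise for $\CD(V)$. The essential point is that the flat-connection (crystal) structure is itself local in the smooth topology, so the gluing isomorphisms automatically intertwine the D-module structures; this is the only step where the specific nature of D-modules is used, and everything afterwards is formal.

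For existence, in the abelian case I would construct $\mathrm{Tot}(\CM(U_*))$ explicitly as the category of \emph{cartesian} D-modules: families $(M_n)_{n\in\N}$ with $M_n\in\CM(U_n)$, together with comparison isomorphisms for the face maps satisfying the cocycle conditions and compatible with degeneracies. Since the transition functors (smooth pullback of right D-modules) are exact, kernels and cokernels are formed levelwise and preserve the cartesian condition, so this is an abelian category, and one checks directly that it has the universal property of the totalization. The triangulated case is more delicate, as triangulated categories do not admit honest limits: here I would pass to the natural DG enhancement of each $\CD(U_n)$ (complexes of D-modules localized at quasi-isomorphisms), form the homotopy limit of the resulting cosimplicial DG category, and define $\mathrm{Tot}(\CD(U_*))$ as its homotopy category, verifying that it is triangulated and compatible with the abelian totalization on hearts.

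Independence of the covering I would deduce from the fact that admissible hypercoverings of $\CX$ form a cofiltered system under refinement: given $\{U_*\}$ and $\{V_*\}$, I would build a common admissible refinement $\{W_*\}$ dominating both and use descent to produce equivalences $\mathrm{Tot}(\CM(U_*))\simeq\mathrm{Tot}(\CM(W_*))\simeq\mathrm{Tot}(\CM(V_*))$, compatibly in the triangulated case; structurally this says the presheaf of categories $U\mapsto\CD(U)$ satisfies \emph{hyperdescent}, so its value on $\CX$ is computed by any hypercovering. The main obstacle lies exactly here, together with the triangulated totalization: one must control the higher homotopy coherences in the cocycle and refinement data, which the triangulated structure alone cannot see. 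This is why a DG or stable $\infty$-categorical enhancement is morally unavoidable even though the final statement is phrased 1-categorically, and the real labor is verifying descent against \emph{general} hypercoverings rather than merely \v{C}ech nerves, after which cofinality of refinements yields independence formally.
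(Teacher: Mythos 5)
The paper gives no proof of this proposition at all: it is quoted verbatim from \cite[Proposition 7.5.2]{BDhecke}, so there is no argument in the text to compare yours against line by line. Judged on its own terms, your outline is sound and identifies the right ingredients --- smooth descent for D-modules as the sole geometric input, exactness of smooth pullback so that the abelian totalization is just the category of cartesian objects with levelwise kernels and cokernels, and cofinality of common refinements of hypercoverings for independence. Where you diverge from the cited source is in the triangulated step: Beilinson--Drinfeld do \emph{not} pass to a homotopy limit of DG enhancements. They instead build a concrete model, namely complexes of D-modules on the simplicial scheme $U_*$ itself (a complex on each $U_n$ together with the simplicial structure maps), take the homotopy category, and localize; the higher coherences you worry about are packaged into the chain-level simplicial object rather than into an abstract limit of categories. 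So your claim that a DG or $\infty$-categorical enhancement is ``morally unavoidable'' is a little overstated --- their construction is an enhancement in disguise, but it lets the whole argument, including independence of the covering, be carried out by explicit comparison of localizations rather than by invoking hyperdescent as a black box. The one genuinely hard step, descent of the derived category against general hypercoverings rather than \v{C}ech nerves, is correctly flagged in your proposal but not actually carried out; in \cite{BDhecke} this is exactly what the admissibility hypothesis on the covering is engineered to control, so if you wanted to complete the argument you would need to say what admissibility buys you rather than leaving it as a remark.
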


Based on this proposition, one can define categories $\CM (\CX)$ and $\CD (\CX)$. The heart of the latter is the former abelian category, but the derived category of the former is usually not equal to the latter, unless $\CX$ is very nice (for example, a Deligne-Mumford stack). The following is the main example considered in this paper.

\begin{Exp}\label{Exp:heartvsderived}

    Let $G$ be a smooth affine algebraic group. Then the category $\CD (G)$ is equivalent to the derived category of modules of the algebra $D_G^{op}$, which can be identified with $U(\fg)\ltimes \CO_G$, where $U(\fg)$ acts on $\CO_G$ via left-invariant vector fields. On the other hand, consider the stack $\CX=G/G_{ad}$ where the quotient is by adjoint action. The category $\CD (G/G_{ad})$ is the triangulated category of strongly equivariant D modules on $G$, while category $\CM (G/G_{ad})$ is the category of modules of $D_G^{op}$ where the action of $\fg_{ad}$ by conjugation integrates to an action of $G_{ad}$. In the triangulated category, strong-equivariance is a data rather than a condition.  
    
\end{Exp}

Let $f: \CX\to \CY$ be a  quasi-compact morphism, then one can define $f_*: \CD(\CX)\to \CD (\CY)$ by choosing an admissible hypercover $\{U_n\}$ of $\CX$ and $\{V_n\}$ of $\CY$ and a compatible morphism $\{f_n\}$, and define $f_*$ to be the morphism $\{f_{n, *}\}$ (the derived functor) on hypercoverings. Similarly, if $f: \CX\to \CY$ is a smooth morphism, then the pullback functor $\{f^!_n\}$ (underived) defined on an admissible hypercover is exact, and induces a functor $f^!: \CD (\CY)\to \CD (\CX)$. If $f$ is also quasi-compact, then $(f^!, f_*)$ forms an adjoint pair. The formation of pushforward and pullback functors respect compositions. 

This adjoint pair satisfy the base-change property. Namely, giving a base-change diagram of quasi-compact morphisms
\be
\btik
\CW\rar{\wt f}\dar{\wt g} & \CY\dar{g}\\
\CX\rar{f} & \CZ 
\etik
\ee
and suppose that $g$ and $\wt g$ are smooth, then the canonical adjoint natural transformation $g^!f_*\to \wt f_* \wt g^!$ is a natural isomorphism. 

Let $G$ be an affine algebraic group with a subgroup $K\subseteq G$, and consider the Harish-Chandra pair $(\fg, K)$. The abelian category of Harish-Chandra modules $\HC (\fg, K)$ is the abelian category of modules of $U(\fg)$ where the action of $\mathfrak k=\mathrm{Lie}(K)$ integrates to an action of $K$. Its derived category is denoted by $D(\HC (\fg, K))$. This category is closely related to the category of D-modules in the following way. Consider the generalized flag variety $K\setminus G$, which admits a right action of $K$. The category of D-modules on the double quotient $K\setminus G/K$ is a monoidal category, called the Hecke category. A variant of this category is the category of weakly $K$-equivariant D-modules on $K\setminus G$, which we denote by $\CD(K\setminus G)^K$. Taking global sections provides an equivalence of triangulated categories
\be
\btik
\CD(K\setminus G)^K\rar{\Gamma} & D(\HC (\fg, K)).
\etik
\ee
Therefore the Harish-Chandra category admits a geometric description, which gives rise to the canonical action of $\CD(K\setminus G/K)$ on $D(\HC (\fg, K))$. 

More generally, if $\CX$ is a smooth algebraic stack acted on by $G$, then there is a pair of adjoint functors
\be
\btik
D(\HC(\fg, K))\arrow[r, shift left, "\Delta"] &\arrow[l, shift left, "\Gamma (-)"] \CD(\CX/K)
\etik
\ee
Here $\Gamma(-)$ is the global section functor and $\Delta$ is the so-called ``localization functor", which is a quantization of the moment map. The above pair are functors of module categories of the Hecke category. The functor $\Delta$ is one of the main ingredients in Beilinson-Drinfeld's approach to the geometric Langlands correspondence. 

\subsubsection{Hecke categories}

We have already mentioned the Hecke category $\CD(K\setminus G/K)$. The monoidal structure is defined by pull-push along the following diagram
\be
\btik
& K\setminus G\times_K G/ K\arrow[dr]\arrow[dl]& \\
K\setminus G/K\times K\setminus G/K & & K\setminus G/K
\etik
\ee
We can consider 2 specializations. The first one is when $K=1$ the trivial group, in which case the Hecke category is simply $\CD (G)$. We can also consider the case when $G=K\times K$ and $K=K_\Delta$ the diagonal group, then the stack $K\setminus G/K$ is naturally identified with $K/K_{ad}$, the adjoint quotient of $K$ by $K$. This stack is called the character stack of $K$, and can be identified with the topological loop space of $*/K$, as in \cite{ben2012loop}. From the above discussions, the category $\CD (K/K_{ad})$ admits a monoidal structure, via the identification with $K\setminus G/K$. Moreover, it is braided monoidal \cite{boyarchenko2014character}. 

On the other hand, the category $\HC (\fg, K)$ does not usually admit a monoidal structure. However, we can again restrict to the case when $G=K\times K$ with $K=K_\Delta$. In this case, $\HC (\fg, K)$ is the category of Harish-Chandra bimodules of $\mathfrak k$, and does admit a monoidal structure, via the forgetful functor $\HC (\fg, K)\to U(\mathfrak k)\biMod$. We denote this category simply by $\HC (K)$. The action of $\CD (K/K_{ad})$ on $D(\HC (K) )$ is given by a monoidal functor $\CD (K/K_{ad})\to D(\HC (K) )$, and upgrades to a functor to the center. The statement for the abelian category is proven in \cite{BFOcharacter}. The statement for the derived categories follows from the corresponding statement for the $\infty$-categories, which was proven in \cite{beraldo2017loop}. We summarize this into the following theorem.

\begin{Thm}[\cite{BFOcharacter} \& \cite{beraldo2017loop}]\label{Thm:center=character}
    There are equivalences of abelian and $\infty$-categories respectively
    \be
\CZ (\HC (K))\simeq \CM (K/K_{ad}),\qquad \CZ (D(\HC (K)))\simeq \CD (K/K_{ad})). 
    \ee
\end{Thm}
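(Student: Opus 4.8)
The plan is to recognize the central monoidal functor $\Phi\colon\CD(K/K_{ad})\to D(\HC(K))$ discussed just above the theorem as an equivalence onto the Drinfeld center, thereby reducing the abelian statement to \cite{BFOcharacter} and the $\infty$-categorical statement to \cite{beraldo2017loop}. First I would recall how $\Phi$ and its centrality arise. Under the identification $K/K_{ad}\simeq K_\Delta\backslash(K\times K)/K_\Delta$, the category $\CD(K/K_{ad})$ is the Hecke category of the pair $(K\times K,K_\Delta)$, and it acts by convolution on the module category $D(\HC(K))\simeq\CD(K_\Delta\backslash(K\times K))^{K_\Delta}$; the functor $\Phi$ is the value of this action on the monoidal unit $\unit$ of $D(\HC(K))$. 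The essential geometric point is that this action is central: the convolution correspondence for $K/K_{ad}$ is symmetric in its two projections to $K_\Delta\backslash(K\times K)/K_\Delta$, and the base-change property for smooth pullbacks against quasi-compact pushforwards recalled above converts this symmetry into a half-braiding of $\Phi(M)$ against every Harish--Chandra bimodule. Hence $\Phi$ lifts canonically to $\CZ(D(\HC(K)))$, and on hearts to $\CZ(\HC(K))$; the content of the theorem is exactly that these lifts are equivalences.

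For the abelian equivalence I would invoke \cite{BFOcharacter} directly, where the induced functor $\CM(K/K_{ad})\to\CZ(\HC(K))$ is shown to be fully faithful and essentially surjective onto half-braided Harish--Chandra bimodules. The only compatibility to verify is that the half-braiding I extract from the symmetric correspondence coincides with the one of \textit{loc.cit.}, which is a matter of unwinding both constructions along the two projection maps. For the $\infty$-categorical equivalence I would pass to the stable presentable enhancements and apply \cite{beraldo2017loop}, in which the Drinfeld center of the monoidal $\infty$-category underlying $D(\HC(K))$ is identified with $\CD(K/K_{ad})$ in its incarnation as sheaves on the loop stack $\mathrm{Maps}(S^1,BK)$; one then checks that the enhancement of $\Phi$ agrees with Beraldo's central functor, so that the identification transports to the functor at hand.

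The hard part, and the reason the $\infty$-categorical statement must be imported rather than assembled by hand at the triangulated level, is coherence of the center. The naive Drinfeld center of a triangulated category --- objects equipped with a family of half-braiding isomorphisms satisfying equations --- is not well behaved, and the family of half-braidings produced above cannot be rigidified at the level of homotopy categories alone; it is a genuinely coherent datum that only makes sense in the stable $\infty$-categorical center. Consequently the real labor sits in \cite{beraldo2017loop}: proving the central functor is an equivalence amounts to computing the center as a limit (a Hochschild-type, or loop-space, computation) while controlling all higher coherences. Once this is granted, the geometric construction of $\Phi$, the verification of centrality via base change, and the abelian statement are comparatively formal, and matching my $\Phi$ with the functors of the two references is the remaining, essentially routine, bookkeeping.
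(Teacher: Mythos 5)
Your proposal is correct and matches the paper's treatment: Theorem \ref{Thm:center=character} is stated there as a summary of results imported from \cite{BFOcharacter} and \cite{beraldo2017loop}, with no independent proof given, exactly as you reduce the abelian statement to the former and the $\infty$-categorical statement to the latter. The additional detail you supply on the construction of the central functor via the Hecke action and on why coherence forces the derived statement to live at the $\infty$-categorical level is consistent with the surrounding discussion in the paper.
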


We make two remarks. The first remark is that the abelian category statement of \cite{BFOcharacter} does not imply the derived category statement, as in Example \ref{Exp:heartvsderived}. This is again due to the fact that strong-equivariance in the abelian category is a condition, but in the derived category is an extra data.  The second remark is that the braided monoidal structures defined by \cite{boyarchenko2014character} and \cite{beraldo2017loop} are not explicit, except in the case of unipotent groups. Our main result is that one can give the underlying triangulated category of the RHS a braided monoidal structure using an explicit quasi-triangular Hopf algebra. To be able to do so, we apply Koszul duality in the context of D-modules. We review this in the next section, again following \cite{BDhecke}.

\subsection{Koszul dualities in D-module categories}\label{subsec:KoszulDO}

\subsubsection{D-modules and de-Rham complexes}

Let $X$ be a smooth affine scheme, and let $\Omega_X$ be the ring of differential forms on $X$. This graded algebra can be naturally identified with functions on $T[1]X$, the $(-1)$-shifted tangent space of $X$. This algebra admits a square-zero differential $d_{dR}$, the de-Rham differential. Let $\Omega_X^{dR}$ be the DG algebra given by $(\Omega_X, d_{dR})$, which is the algebraic version of the de-Rham complex of $X$. There is a pair of adjoint functors between the DG categories of chain complexes
\be\label{eq:DOadjoint}
\btik
C(\CM (X))\arrow[r, shift left, "\Omega"] & C(\Omega_X^{dR}\Mod)\arrow[l, shift left, "D"]
\etik
\ee
where $\Omega$ sends a D-module $M$ to the de-Rham complex corresponding to $M$. One can show that the derived endomorphism algebra $\mathrm{REnd} (\CO_X)$ is quasi-isomorphic to $\Omega_X^{dR}$, and the above adjunction is simply a tensor-Hom adjunction (hence Koszul duality). However, descending to the derived categories, this is usually not an equivalence. In \cite[Section 7.2.5]{BDhecke}, this was fixed by inverting morphisms of $\Omega_X^{dR}$ modules that are quasi-isomorphisms after applying $D$ (therefore not the usual derived category). Another way to deal with this is by passing to a $\C^\times$ equivariant category, and considering bounded derived categories, as in \cite{beilinson1996koszul}. This has the advantage that the Koszul dual is a ordinary Hopf algebra rather than a DG Hopf algebra (essentially treating the de-Rham differential as a generator as in \cite[Section 5]{ben2012loop}), but the drawback in our setting is that the bounded derived category does not admit a monoidal structure, as the map $G\times G\to G$ is not proper. We therefore combine the two approaches: we consider $\C^\times$ equivariant categories that are not neccesarily bounded, and invert quasi-isomorphisms in the same way as \cite[Section 7.2.5]{BDhecke}.

For $X$ a smooth affine scheme, the algebra $D_X$ is a filtered algebra, whose associated graded is naturally identified with functions on $T^*X$. We consider the Rees algebra $D_X^\hbar$ associated to $D_X$, which is an algebra with a $\C^\times$ action, and the category of $\C^\times$-equivariant modules $\CM_\hbar (X)^{\C^\times}$. Locally, the action of $\C^\times$ has degree $-1$ on the tangent vectors and $\hbar$. The formalism of the previous section works equally well in this setting, allowing us to define the derived category $\CD_\hbar (\CX)^{\C^\times}$ for a smooth stack $\CX$, whose heart is the abelian category $\CM_\hbar (\CX)^{\C^\times}$. 

On the other hand, the algebra $\Omega_X$ carries a natural $\C^\times$ action, whose weights coincide with the grading. Denote by $\CA_X$ (following the notation of \cite{ben2012loop}) the algebra
\be
\CA_X:= \Omega_X[\delta_{dR}]/(\delta_{dR}^2, [\delta_{dR}, \omega]=d_{dR}\omega).
\ee
Namely instead of considering the DG algebra $\Omega^{dR}_X$, we add a generator $\delta_{dR}$ whose commutator induces the de-Rham differential. This algebra admits a $\C^\times$ action where $\delta_{dR}$ has weight $1$. We consider the category $\CA_X\Mod^{\C^\times}$ the category of $\C^\times$ equivariant modules. Given a smooth algebraic stack $\CX$ with a smooth cover $U$, the sheaf of algebras $\CA_{U}$ descends to a sheaf of algebras $\CA_\CX$, which is still equivariant with respect to the $\C^\times$ action. One can define, similar to $\CD (\CX)$, the category $D(\CA_\CX\Mod^{\C^\times})$. 

The functors in equation \eqref{eq:DOadjoint} works equally well in this equivariant setting, providing, for each smooth affine variety $X$, a pair of adjoint functors
\be\label{eq:DOajunch}
\btik
C(\CM_\hbar (X)^{\C^\times})\arrow[r, shift left, "\Omega_\hbar"] & C(\CA_X\Mod^{\C^\times})\arrow[l, shift left, "D_\hbar"]
\etik
\ee
This pair is induced by the $\C^\times$-equivariant version of the de-Rham complex. More explicitly, consider the object
\be\label{eq:DRhbar}
DR_{\hbar ,X}:=D_X^{\hbar}\otimes_{\CO_X} \CA_X, \qquad d=\sum_i (v_i)_L\otimes (\omega^i)_L+(\hbar)_L\otimes (\delta_{dR})_L,
\ee
where $\{v_i\}$ is a (local) basis for $T_X$ and $\{\omega^i\}$ a dual basis for $\Omega^1(X)$, and the subscrip $L$ means left multiplications. Note the tensor is over $\CO_X$ as left modules. This is a $D_X^{\hbar, op}-\CA_X$ bimodule, and the functors in equation \eqref{eq:DOajunch} are given by
\be
\Omega_\hbar (M):=\Hom_{D_X^\hbar}(DR_{\hbar, X}, M), \qquad D_\hbar (N):= DR_{\hbar, X}\otimes_{\CA_X} N. 
\ee
We call a morphism in $C(\CA_X\Mod^{\C^\times})$ a $D$-quasiisomorphism if its image under $D_\hbar$ is a quasi-isomorphism. Denote by $D_{qs} (\CA_X\Mod^{\C^\times})$ the localization of $K(\CA_X\Mod^{\C^\times})$ (the homotopy category) with respect to $D$-quasiisomorphisms. By repeating the proof of \cite[Lemma 7.2.4 \& Section 7.6.11]{BDhecke} in the $\C^\times$-equivariant setting, one arrives at the following result. 

\begin{Prop}\label{Prop:DOKoszul}
    There is an equivalence of triangulated categories
    \be
\CD_\hbar (X)^{\C^\times}\simeq D_{qs}(\CA_X\Mod^{\C^\times}).
    \ee
    Moreover, the notion of $D$-quasiisomorphism satisfies descent under smooth topology. Consequently, for any smooth stack $\CX$, there is an equivalence of triangulated categories
    \be\label{eq:DOequiv}
\CD_\hbar (\CX)^{\C^\times}\simeq D_{qs}(\CA_\CX\Mod^{\C^\times}).
    \ee
    This equivalence is functorial with respect to pushforward functors and smooth pullback functors. 
\end{Prop}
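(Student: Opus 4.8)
The plan is to first establish the affine equivalence and then bootstrap to stacks by descent. For a smooth affine $X$, the object $DR_{\hbar,X}$ of \eqref{eq:DRhbar} is the $\C^\times$-equivariant, $\hbar$-deformed de-Rham resolution: being $D_X^\hbar\otimes_{\CO_X}\CA_X$ with $\CA_X$ acting on the right, it is (locally) free as a right $\CA_X$-module, and as a complex of $D_X^\hbar$-modules, with differential $d=\sum_i (v_i)_L\otimes(\omega^i)_L+(\hbar)_L\otimes(\delta_{dR})_L$, it resolves $\CO_X$. The first thing I would check is precisely this resolution property, equivalently that the derived endomorphism algebra $\mathrm{REnd}_{D_X^\hbar}(\CO_X)$ is quasi-isomorphic to $\CA_X$; this is the $\C^\times$-equivariant refinement of the identification $\mathrm{REnd}(\CO_X)\simeq\Omega_X^{dR}$ recalled above, the point being that once one works $\hbar$-equivariantly the de-Rham differential is realized by the honest weight-one generator $\delta_{dR}$ rather than as an external differential. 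Freeness of $DR_{\hbar,X}$ over $\CA_X$ guarantees that $D_\hbar=DR_{\hbar,X}\otimes_{\CA_X}(-)$ already computes its own derived functor on the homotopy category $K(\CA_X\Mod^{\C^\times})$, and the pair $(D_\hbar,\Omega_\hbar)$ is the tensor-Hom adjunction attached to this bimodule.

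Next I would analyze the unit and counit of this adjunction, in which $D_\hbar$ is left adjoint to $\Omega_\hbar$. The resolution property makes the counit $D_\hbar\Omega_\hbar(M)\to M$ a quasi-isomorphism for every $M\in C(\CM_\hbar(X)^{\C^\times})$, which yields essential surjectivity of $D_\hbar$ onto $\CD_\hbar(X)^{\C^\times}$. By the very definition of a $D$-quasiisomorphism, $D_\hbar$ inverts such maps, so it factors through the localization $D_{qs}(\CA_X\Mod^{\C^\times})$. The remaining point is that the unit $N\to\Omega_\hbar D_\hbar(N)$ is a $D$-quasiisomorphism for every $N$: applying $D_\hbar$ and using the triangle identities together with the counit isomorphism shows its image is invertible, so the unit becomes an isomorphism in $D_{qs}$ and $D_\hbar$ becomes fully faithful there. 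Essential surjectivity and full faithfulness give the affine equivalence. This is exactly where the localization $D_{qs}$, rather than the naive derived category, is forced upon us, and where I would transcribe \cite[Lemma 7.2.4]{BDhecke} into the equivariant setting. Throughout, $\C^\times$-equivariance is preserved by all functors because $DR_{\hbar,X}$ is manifestly equivariant with weights arranged so that $d$ has weight zero and cohomological degree $+1$; tracking these weights is routine.

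With the affine case in hand I would turn to descent, which is the heart of the argument and the step I expect to be the main obstacle. I need that the class of $D$-quasiisomorphisms is local for the smooth topology: a morphism of $\CA_\CX$-modules is a $D$-quasiisomorphism if and only if its restriction along each chart of an admissible hypercovering is one. This reduces to two facts — that $D_\hbar$ commutes with smooth pullback $f^!$ up to the appropriate shift, via base change for the resolution $DR_{\hbar,X}$, and that smooth pullback along a cover is conservative, i.e.\ detects quasi-isomorphisms of D-modules. Establishing the base-change compatibility of $DR_{\hbar,X}$ in the $\hbar$-equivariant setting is the technically delicate part, and is the equivariant analogue of \cite[Section 7.6.11]{BDhecke}. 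Granting it, the localized categories $U\mapsto D_{qs}(\CA_U\Mod^{\C^\times})$ assemble into a simplicial triangulated category over the hypercovering, the affine equivalence just established is natural in $U$ and hence a morphism of simplicial categories, and applying the totalization result for admissible hypercoverings yields the stack equivalence \eqref{eq:DOequiv}, independent of the chosen cover.

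Finally, for functoriality I would verify on affine charts that the Koszul functors intertwine the D-module operations with their $\CA$-side counterparts: for pushforward this is compatibility of $D_\hbar$ with $f_*$, and for a smooth morphism it is the commutation with $f^!$ already used in the descent step, both following from the explicit form of $DR_{\hbar,X}$ and the base-change property stated above for the pair $(f^!,f_*)$. Since the gluing is built from these chart-level functors, the resulting equivalence on stacks automatically respects pushforwards and smooth pullbacks. To reiterate, the main obstacle is the smooth-local nature of $D$-quasiisomorphisms — specifically the base-change compatibility of the deformed de-Rham resolution — since without it the localized categories would not satisfy descent and the totalization could not be formed.
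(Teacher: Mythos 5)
Your proposal is correct and follows essentially the same route as the paper: establish that the counit $D_\hbar\Omega_\hbar M\to M$ is a quasi-isomorphism (the equivariant transcription of \cite[Lemma 7.2.4]{BDhecke}), deduce that the unit is a $D$-quasiisomorphism so the adjunction descends to an equivalence on $D_{qs}$, and obtain the stack statement from smooth-local descent of $D$-quasiisomorphisms as in \cite[Section 7.6.11]{BDhecke}. The paper's proof is just a compressed version of this, deferring the resolution property, the base-change compatibility of $DR_{\hbar,X}$, and the functoriality checks to the Beilinson--Drinfeld references that you spell out explicitly.
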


\begin{proof}

    This is a repeat of the proof of \cite[Lemma 7.2.4]{BDhecke}. Indeed, one shows that for any $M\in C(\CM_\hbar (X)^{\C^\times})$, the canonical adjunction $D\Omega M\to M$ is a quasi-isomorphism. Therefore, the adjunction $N\to \Omega DN$ is a $D$-quasiisomorphism. The second part of the statement follows from \cite[Section 7.6.11]{BDhecke}, again applied in the $\C^\times$-equivariant setting. 
    
\end{proof}

\begin{Rem}
A $D$-quasiisomorphism is a quasi-isomorphism, but the inverse is not true. A quasi-isomorphism is a $D$-quasiisomorphism when both the domain and co-domain are bounded complexes of finitely generated modules. The category on the RHS can be identified with the category of injective complexes of  \cite{krause05} or the coderived category of \cite{positselski2011two}. In modern terms \cite{Gaiindcoh}, this is the homotopy category of the category of ind-coherent modules. 
\end{Rem}

In the case when $X$ is a smooth affine variety, the algebra $\CA_X$ and its category of modules are easy to describe. Suppose now that $G$ is an algebraic group acting on $X$, then  \cite[7.6.11]{BDhecke} described the category $C(\CA_{X/G}\Mod^{\C^\times})$ explicitly. An object in this category is a complex of $\CA_X$ modules $M^*$, together with a compatible action of the algebraic group $G$, as well as $G$-equivariant maps $\iota_b: M^*\to M^{*-1}$ for $b\in\fg$ ($G$ acts by conjugation on $\fg$), such that
\be
\{\iota_b, \delta_{dR}\}=\mathrm{Lie}_b,\qquad \{\iota_b, \omega\}=(\xi_b, \omega), ~\forall \omega\in \Omega^1(X),
\ee
Here $\mathrm{Lie}_b$ denotes the action of $b$ on $M^*$ induced by the algebraic action of $G$, and $(\xi_b, \omega)$ is the natural pairing between tangent vectors and one-forms. Note here that $\xi_b$ is the vector $\xi_b (f)(x)=\pd_t\vert_{t=0} f(e^{-tb}x)$ that induces the left action of $G$ on $X$. Another way to say this is that an object in $C(\CA_{X/G}\Mod^{\C^\times})$ is a module of $\CA_X$ together with a compatible action of $G\ltimes \fg[1]=T[1]G$.

Let us consider our main examples. 

\begin{Exp}\label{exp:G}

    Consider $\CX=G$, and identify $\Omega_G$ with the algebra of functions on $T[1]G$. We can identify $T[1]G$ with $G\times \fg[1]$ via left-invariant vector fields, and so $\Omega_G=\CO_G\otimes \bigwedge {}^*\fg^*[-1]$. Let $\delta_{dR}$ be the auxiliary variable in $\CA_G$, then the commutation relation of $\CA_G$ reads
    \be
\{\delta_{dR}, f\}=\sum \xi_{x_i}^R (f)c^i, \qquad \delta_{dR}c^i=-\frac{1}{2}\sum f^i_{jk} c^jc^k. 
    \ee
 Here $\xi_{x_i}^R(f)$ is the action of $\fg$ on $\CO_G$ by left-invariant vector fields. Under the equivalence of equation \eqref{eq:DOequiv}, the monoidal structure of $\CD_\hbar (G)^{\C^\times}$ corresponds to the group structure of $T[1]G=G\ltimes \fg[1]$, the extension of $G$ by the representation $\fg$ in degree $-1$. In the next section, we will write the corresponding coproduct more explicitly. 
    
\end{Exp}

\begin{Exp}\label{exp:G/G}
    Consider $\CX=G/G_{ad}$. An object in $C(\CA_{G/G_{ad}}\Mod^{\C^\times})$ is a complex $M^*$ that is a module of $\CA_G$, together with an algebraic action of the group $G\ltimes \fg[1]$, such that $M^*$ is a weakly $G$-equivariant module of $\CA_G$, and the following commutation relation holds
    \be
\{b, c\}=\langle \xi_b^{ad}, c\rangle, \qquad \{\delta_{dR}, b\}= \mathrm{Lie}_b, \qquad b\in\fg[1],~ c\in \fg^*[-1].
    \ee
    Here $\xi_b^{ad}$ is the vector field on $G$ corresponding to $b$ (under the conjugation action), and $\langle -, -\rangle$ is the natural pairing between vector fields and 1-forms. The term $\mathrm{Lie}_b$ is the Lie algebra action of $\fg$ on $M^*$. In the next sections, we will construct the coproduct corresponding to the monoidal structure under the equivalence of equation \eqref{eq:DOequiv}, as well as a braiding. 
    
\end{Exp}

We will use the functoriality statement of Proposition \ref{Prop:DOKoszul} in what follows, so we give a quick discussion now. Let $f:\CX\to \CY$ be a quasi-compact morphism of smooth algebraic stacks, then it induces a map
\be
f^*: \Omega_{\CY}\to \Omega_{\CX}
\ee
of sheaf of algebras compatible with the de-Rham differential, namely a map $\CA_{\CY}\to \CA_{\CX}$. The functoriality of equation \eqref{eq:DOequiv} states that under the equivalence, the pushforward functor $f_*$ coincides with restriction functor. If $f$ is also smooth, then the pullback $f^*$ coincides with the induction functor. 

\begin{Exp}
    Let $f: X\to Y$ be a morphism of smooth affine varieties equivariant with respect to the action of $G$. Then it induces a map of algebras $\CA_Y\to \CA_X$ equivariant under the action of $T[1]G$. The push-forward functor is precisely the restriction functor along this map. On the other hand, if $\pi: X\to X/G$, then the functor $\pi^*$ is the forgetful functor for the action of $T[1]G$.

\end{Exp}

\subsubsection{Harish-Chandra modules and Chevalley-Eilenberg complex}\label{subsubsec:HCCE}

We now consider a similar Koszul duality statement for $\HC(\fg, K)$, and again in the equivariant context. Let $U_\hbar (\fg)$ be the Rees algebra associated to $U(\fg)$. It is a $\C^\times$-equivariant algebra where $\C^\times$ acts on $\fg$ and $\hbar$ with weight $-1$. A vector space $M$ is called an \textbf{asymptotic} Harish-Chandra module, if it is a module of $U_\hbar(\fg)$ and admits an action of $K$ such that $\hbar d\mathrm{act} (x)=\mathrm{Lie}_x$, where $d\mathrm{act}$ is the action of $\mathfrak{k}$ induced by the derivative of the $K$ action, and $\mathrm{Lie}_x$ is its action through the embedding $\mathfrak k\to \fg$. Since the relation $\hbar d\mathrm{act} (x)=\mathrm{Lie}_x$ is equivariant with respect to the $\C^\times$ action, one can define the category $\HC_\hbar (\fg, K)^{\C^\times}$ to be the category of $\C^\times$-equivariant asymptotic Harish-Chandra modules. 

On the other hand, consider the exterior algebra $\bigwedge \!\!{}^*\fg$, which admits a square-zero differential $d_{CE}$, the Chevalley-Eilenberg differential. Similar to the previous section, let us define $\Ce(\fg)$ to be the algebra
\be
\Ce(\fg):= \bigwedge \!\!{}^*\fg[\delta_{CE}]/(\delta_{CE}^2, \{\delta_{CE}, c\}=d_{CE}c).
\ee
This algebra admits an action of $K\ltimes \mathfrak k[1]$, where $K$ acts on $\Ce(\fg)$ via the adjoint action and $\mathfrak{k}[1]$ acts by contraction 
\be
\{b, c\}=\langle b, c\rangle, \qquad \forall b\in \mathfrak k[1], ~c\in \fg^*[-1].
\ee
An $\CA_{\fg, K}$ module is a module of $\Ce(\fg)$ together with a compatible action of $K\ltimes \mathfrak k[1]$, and we denote by $\CA_{\fg, K}\Mod^{\C^\times}$ the category of equivariant modules of $\CA_{\fg, K}$. In \cite[Section 7.7.7]{BDhecke}, Beilinson-Drinfeld defined a pair of adjoint functors
\be
\btik
C(\HC_\hbar (\fg, K)^{\C^\times}) \arrow[r, shift left, "\Omega_{\fg, K}"]& \arrow[l, shift left, "D_{\fg, K}"]C(\CA_{\fg, K}\Mod^{\C^\times})
\etik
\ee
Call a morphism of $\CA_{\fg, K}$ modules a $\fg$-quasiisomorphism if its image under $D_{\fg, K}$ is a quasi-isomorphism, and denote by $D_{qs}(\CA_{\fg, K}\Mod^{\C^\times})$ the localization of $K(\CA_{\fg, K}\Mod^{\C^\times})$ with respect to $\fg$-quasiisomorphisms. The following, as Proposition \ref{Prop:DOKoszul}, is a repeat of \cite[Section 7.7.7]{BDhecke} in the $\C^\times$-equivariant setting. 

\begin{Prop}\label{Prop:HCKoszul}

    There is an equivalence of triangulated categories
    \be
D(\HC_\hbar (\fg, K)^{\C^\times})\simeq D_{qs}(\CA_{\fg, K}\Mod^{\C^\times}).
    \ee
    
\end{Prop}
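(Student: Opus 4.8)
The plan is to follow the proof of Proposition~\ref{Prop:DOKoszul} essentially verbatim, substituting the Chevalley--Eilenberg side for the de-Rham side, i.e. to rerun \cite[Section~7.7.7]{BDhecke} while carefully tracking the extra $\C^\times$-weights and the Rees parameter $\hbar$. First I would make the adjunction $(\Omega_{\fg,K}, D_{\fg,K})$ explicit by exhibiting the bimodule that induces it, in exact analogy with $DR_{\hbar,X}$ of \eqref{eq:DRhbar}. This is the $\C^\times$-equivariant Chevalley--Eilenberg--Koszul complex
\[
CE_\hbar \;:=\; U_\hbar(\fg)\otimes \Ce(\fg),
\qquad
d \;=\; \sum_i (x_i)_L\otimes (c^i)_L + (\hbar)_L\otimes(\delta_{CE})_L,
\]
where $\{x_i\}$ is a basis of $\fg$, $\{c^i\}$ the corresponding generators of $\Ce(\fg)$, and the subscript $L$ denotes left multiplication. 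I would check that $d^2=0$ from the relations of $U_\hbar(\fg)$ and $\Ce(\fg)$, that $CE_\hbar$ is a $U_\hbar(\fg)^{op}$--$\CA_{\fg,K}$ bimodule, and --- crucially --- that it is equivariant for the diagonal $K\ltimes\fk[1]$-action and carries $\C^\times$-weight $0$, with $\hbar$ and $\delta_{CE}$ both of weight $1$. The functors are then $\Omega_{\fg,K}(M)=\Hom_{U_\hbar(\fg)}(CE_\hbar, M)$ and $D_{\fg,K}(N)=CE_\hbar\otimes_{\CA_{\fg,K}} N$.

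The heart of the argument is to show, as in Proposition~\ref{Prop:DOKoszul}, that the counit $D_{\fg,K}\Omega_{\fg,K}M\to M$ is an honest quasi-isomorphism for every $M\in C(\HC_\hbar(\fg,K)^{\C^\times})$. This reduces to the statement that $CE_\hbar$ is a resolution: by the PBW theorem $CE_\hbar$ is free over $U_\hbar(\fg)$, and its underlying complex is the Koszul/Chevalley--Eilenberg resolution of the augmentation, which stays acyclic after the flat Rees deformation and splits into finite-dimensional $\C^\times$-weight pieces, so the counit is a weight-wise quasi-isomorphism. Granting this, the triangle identities show that the unit $N\to\Omega_{\fg,K}D_{\fg,K}N$ becomes a quasi-isomorphism after applying $D_{\fg,K}$, i.e. is a $\fg$-quasiisomorphism by definition. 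Localizing $K(\CA_{\fg,K}\Mod^{\C^\times})$ at $\fg$-quasiisomorphisms then turns both unit and counit into isomorphisms, so the adjunction descends to the asserted equivalence $D(\HC_\hbar(\fg,K)^{\C^\times})\simeq D_{qs}(\CA_{\fg,K}\Mod^{\C^\times})$.

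I expect the main obstacle to be the bookkeeping of equivariance rather than the acyclicity itself: one must verify that $\Omega_{\fg,K}$ and $D_{\fg,K}$ genuinely preserve the compatibility constraints defining the two categories --- the integrability relation $\hbar\,d\mathrm{act}(x)=\mathrm{Lie}_x$ on the Harish--Chandra side and the contraction relations $\{b,c\}=\langle b,c\rangle$ for $b\in\fk[1]$, $c\in\fg^*[-1]$ on the $\CA_{\fg,K}$ side --- so that the $K\ltimes\fk[1]$-structure on $CE_\hbar$ matches the $K$-structure on $M$ under the Koszul pairing. This is exactly the (extremely) slight extension of \cite[Section~7.7.7]{BDhecke} to the asymptotic $\C^\times$-equivariant setting, and checking that the Rees deformation does not disturb the finite-dimensional weight-graded acyclicity is the only point where genuine, if routine, verification is needed.
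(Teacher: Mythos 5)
Your proposal follows essentially the same route as the paper, which proves this proposition (and its variants, Propositions \ref{Prop:DOKoszul} and \ref{Prop:HCKKoszul}) by rerunning \cite[Section 7.7.7]{BDhecke} in the $\C^\times$-equivariant setting: exhibit the Koszul/Chevalley--Eilenberg bimodule inducing the adjunction, show the counit $D_{\fg,K}\Omega_{\fg,K}M\to M$ is a quasi-isomorphism, deduce that the unit is a $\fg$-quasiisomorphism, and localize. Your added detail on the explicit bimodule and the PBW/weight-wise acyclicity of the Rees deformation is consistent with the paper's (largely deferred) argument, so no substantive difference or gap.
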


\begin{Rem}
    As before, a $\fg$-quasiisomorphism is an ordinary quasi-isomorphism, but the converse is not true. The RHS is again identified with the category of injective complexes or ind-coherent modules. 
\end{Rem}

The category $D(\HC_\hbar (\fg, K)^{\C^\times})$ does not admit a monoidal structure, but that of a module category of the Hecke category. The equivalence above gives a simple way to study the Hecke action. In the case when $G=K\times K$ and $K=K_\Delta$, the category $\HC_\hbar (\fg, K)^{\C^\times}=\HC_\hbar (K)^{\C^\times}$ does admit a monoidal structure. However, the above equivalence does not make it easier to study this monoidal structure. To simplify the situation, we consider the following specialization. Assume now that the embedding of Lie algebras $\mathfrak k\to \mathfrak g$ admits a splitting $\mathfrak g=\mathfrak k\oplus \mathfrak h$ such that $\fh$ is an ideal. The algebra $\Ce(\fh)$ admits an action of $K$ by the adjoint action. An $\CA_{\fh, K}$ module is a module of $\Ce(\fh)$ with a compatible action of $K$, namely, an object in $\Ce(\fh)\Mod (\Rep (K))$. 

We now build an adjoint pair
\be\label{eq:adjointHC}
\btik
C(\HC_\hbar (\fg, K)^{\C^\times}) \arrow[r, shift left, "\Omega_{\fh, K}"]& \arrow[l, shift left, "D_{\fh, K}"]C(\CA_{\fh, K}\Mod^{\C^\times})
\etik
\ee
similar to $\CA_{\fg, K}\Mod$. Consider the complex
\be
DR_{\fh, K}:= U_\hbar (\fg)\otimes \Ce(\fh), \qquad d=\sum_j (x_j)_R\otimes (c^j)_L-(\hbar)_R\otimes (\delta_{CE})_L.
\ee
Here $\{x_j\}\subseteq \fh$ is a set of basis of $\fh$ with dual basis $\{c^j\}$, and $(x_j)_R$ means right multiplication. This complex admits an action of $\Ce(\fh)$ on the right, as well as the structure of an asymptotic $(\fg\times \fk, K)$ module, and the two structures are compatible. Here $\fg$ acts by left multiplication on $U_\hbar(\fg)$, $\fk$ acts by right multiplication on $U_\hbar (\fg)$ as well as by conjugation on $\Ce(\fh)$ (multiplied by $\hbar\otimes 1$), and $K$ embeds into $G\times K$ diagonally. This object induces the pair of adjunctions above. More specifically, given an object $V\in C(\HC_\hbar (\fg, K)^{\C^\times})$, the functor $\Omega_{\fh, K}$ sends $V$ to
\be
\Hom_{U_\hbar (\fg)}(DR_{\fh, K}, V)= \Ce(\fh)^*\otimes V,
\ee
with the obvious $\CA_{\fh, K}$ module structure. On the other hand, given $W\in C(\CA_{\fh, K}\Mod^{\C^\times})$, the functor $D_{\fh, K}$ sends $W$ to 
\be
DR_{\fh, K}\otimes_{\Ce(\fh), U_\hbar (\fk)} W =U_\hbar (\fh) \otimes W. 
\ee

Call a morphism in $C(\CA_{\fh, K}\Mod^{\C^\times})$ a $\fg$-quasiisomorphism if its image under $D_{\fh, K}$ is a quasi-isomorphism. Similar to Proposition \ref{Prop:HCKoszul}, the following is true.

\begin{Prop}\label{Prop:HCKKoszul}
    Let $D_{qs}(\CA_{\fh, K}\Mod^{\C^\times})$ be the localization of $K(\CA_{\fh, K}\Mod^{\C^\times})$ with respect to $\fg$-quasiisomorphisms. Then there is an equivalence of triangulated categories
    \be
D(\HC_\hbar (\fg, K)^{\C^\times})\simeq D_{qs}(\CA_{\fh, K}\Mod^{\C^\times}). 
    \ee
\end{Prop}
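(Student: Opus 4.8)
The plan is to follow the template of the proof of Proposition~\ref{Prop:DOKoszul} (equivalently \cite[Section 7.7.7]{BDhecke}), now resolving only in the $\fh$-directions and treating the $\fk$-directions through the genuine $K$-action rather than a Chevalley--Eilenberg resolution. First I would record that the bimodule $DR_{\fh, K}$ makes $(\Omega_{\fh, K}, D_{\fh, K})$ into an adjoint pair already at the level of chain complexes, via the tensor--Hom adjunction for $DR_{\fh, K}$ regarded as a $U_\hbar(\fg)$--$(\Ce(\fh), U_\hbar(\fk))$ bimodule; this is precisely the content of the two displayed formulas preceding the statement, with $D_{\fh, K}$ the (left adjoint) tensor functor and $\Omega_{\fh, K}$ the (right adjoint) Hom functor. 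The whole equivalence then reduces, exactly as in the earlier propositions, to a single acyclicity statement.

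The main step is to show that for every $V \in C(\HC_\hbar(\fg, K)^{\C^\times})$ the counit $D_{\fh, K}\Omega_{\fh, K}(V) \to V$ is an honest quasi-isomorphism. Unwinding the two formulas gives $D_{\fh, K}\Omega_{\fh, K}(V) = U_\hbar(\fh) \otimes \Ce(\fh)^* \otimes V$, and the total differential is the sum of the internal differential of $V$, the $\delta_{CE}$-term, and the Koszul contraction $\sum_j (x_j) \otimes \iota_{c^j}$ together with the $\fh$-action on $V$. Identifying $\Ce(\fh)^*$ with the Chevalley--Eilenberg chains $\bigwedge^{\!*}\fh$, this is the Koszul complex $U_\hbar(\fh) \otimes \bigwedge^{\!*}\fh$ of $\fh$ with the action on $V$ folded in. Since $U_\hbar(\fh) \otimes \bigwedge^{\!*}\fh$ is a free resolution of the trivial $U_\hbar(\fh)$-module, the augmentation to $V$ is a quasi-isomorphism. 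This is the exact analogue of the identity ``$D\Omega M \to M$ is a quasi-isomorphism'' used in the proof of Proposition~\ref{Prop:DOKoszul}, and it is the only substantive computation.

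Granting this, the remainder is formal and parallels the earlier arguments. Applying $D_{\fh, K}$ to the unit $W \to \Omega_{\fh, K} D_{\fh, K}(W)$ and comparing with the counit for $D_{\fh, K}(W)$ shows the unit is a $\fg$-quasiisomorphism, so after localizing $K(\CA_{\fh, K}\Mod^{\C^\times})$ at $\fg$-quasiisomorphisms the adjoint pair becomes a pair of inverse equivalences of triangulated categories. I would then check the elementary compatibilities: $D_{\fh, K}$ sends $\fg$-quasiisomorphisms to quasi-isomorphisms by definition, while $\Omega_{\fh, K}$ sends quasi-isomorphisms to $\fg$-quasiisomorphisms because $D_{\fh, K}\Omega_{\fh, K} \simeq \mathrm{id}$ by the previous step; hence both functors descend to the localizations and identify the ordinary derived category $D(\HC_\hbar(\fg, K)^{\C^\times})$ with $D_{qs}(\CA_{\fh, K}\Mod^{\C^\times})$.

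The hard part — and the only place the hypothesis $\fg = \fk \oplus \fh$ with $\fh$ an ideal is genuinely used — is the counit computation. The point distinguishing this from Proposition~\ref{Prop:HCKoszul} is that the $\fk$-directions are \emph{not} resolved: they are already integrated into the honest $K$-action on a Harish-Chandra module and are instead absorbed by the $U_\hbar(\fk)$-balancing in the tensor product defining $D_{\fh, K}$. I would make this precise by verifying that the Koszul differential involves only the ideal $\fh$, so its acyclicity is untouched by the $\fk$-action, and that replacing a Chevalley--Eilenberg resolution in the $\fk$-directions by $\Rep(K)$ is harmless because the category of algebraic $K$-representations is exact with the relevant adjunctions; this is exactly what upgrades the bare Koszul acyclicity to the $K$-equivariant and $\C^\times$-equivariant statement.
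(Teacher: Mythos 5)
Your proposal is correct and follows the same route as the paper: the paper's proof is exactly the two-step argument of \cite[Lemma 7.2.4]{BDhecke} — show the counit $D_{\fh,K}\Omega_{\fh,K}M\to M$ is an honest quasi-isomorphism (via the Koszul/Chevalley–Eilenberg resolution of $U_\hbar(\fh)$, which is where the splitting $\fg=\fk\oplus\fh$ with $\fh$ an ideal enters), deduce that the unit is a $\fg$-quasiisomorphism, and descend to the localizations. You have merely written out the details that the paper leaves implicit; the only slight imprecision is that $\Ce(\fh)^*$ also carries the dual of the $\delta_{CE}$-direction (pairing against $\hbar$), not just $\bigwedge^{\!*}\fh$, but this does not affect the acyclicity argument.
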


\begin{proof}

    This is a repeat of the argument of \cite[Lemma 7.2.4]{BDhecke}. Indeed, one shows that for any $M\in C(\HC_\hbar (\fg, K)^{\C^\times})$, the canonical adjunction $D\Omega M\to M$ is a quasi-isomorphism. This implies that $N\to \Omega D N$ is a $\fg$-quasiisomorphism, and the result follows. 
    
\end{proof}

\begin{Rem}

    The relation between Proposition \ref{Prop:HCKoszul} and \ref{Prop:HCKKoszul} is that the former is a Koszul duality for the Lie algebra $\fg$ whereas the latter is a Koszul duality for $\fh$. The reason we do so is that the latter admits a Hopf structure in the example below.  
    
\end{Rem}

We now turn to the main example of our consideration, namely $G=K^2$ and $K=K_\Delta$.

\begin{Exp}\label{Exp:HCK}

    Consider $G=K^2$ and $K=K_\Delta$. In this case, we denote the Harish-Chandra category by $\HC_\hbar (K)^{\C^\times}$. The embedding $\fk\to \fk^2$ admits a splitting by an ideal $\fh=\fk\oplus 0$. The resulting action of $\fk$ on $\Ce(\fh)=\Ce(\fk)$ is simply the conjugation action of $\fk$ on its Chevalley-Eilenberg complex. Therefore, a module of $\CA_{\fh, K}$ is simply a module of $K$ together with a compatible action of $\Ce (\fk)$. In other words, we have
    \be\label{eq:HCCEK}
\CA_{\fh, K}\Mod\simeq \Ce (\fk)\Mod (\mathrm{Rep}(K)).
    \ee
Here we view $\Ce(\fk)$ as an algebra object in $\mathrm{Rep}(K)$. One can intuitively think of an object in the above categories as a module of the ``semi-direct product" $\C \cdot K\ltimes \Ce(\fk)$. Here $\C\cdot K$ is the group algebra. However, one needs to replace $\C\cdot K$ with a topological version in order to account for the smooth topology of the group $K$. We will do so in the next section and obtain a Hopf algebra structure on this semi-direct product. 
    
\end{Exp}

\section{Graded Hopf algebras}\label{sec:Hopfalgebra}

In the previous section, we introduced the category of D-modules and Harish-Chandra modules associated to an algebraic group $G$. Moreover,  by the work of \cite{BDhecke}, there are algebraic counterparts to these geometric categories. The examples of interest to us are Examples \ref{exp:G}, \ref{exp:G/G} and \ref{Exp:HCK}. 

In this section, we show that the algebraic counterparts in these examples all admit Hopf structures. The Hopf algebra structure is very natural for the algebra $\CA_G$, as it is the de-Rham complex for a group. We will start by deriving the Hopf algebra structure in the example of Harish-Chandra bimodules, as it is the most non-intuitive. This Hopf structure will be explained in the context of 1-shifted Lie bialgebras of \cite{NPShifted}. 

Unless otherwise stated, for the rest of this section, $G$ denotes an affine algebraic group with Lie algebra $\fg$. In this section and the next, we will need to use dual and Drinfeld double of infinite-dimensional Hopf algebras. The details of their construction are spelled out in Appendix \ref{app:double}. 

\subsection{Hopf algebras and Harish-Chandra modules}\label{subsec:HopfHC}

\subsubsection{Hopf algebras from 1-shifted Lie bialgebras}

A 1-shifted metric Lie algebra is a graded Lie algebra $\fh$ with a non-degenerate symmetric invariant bilinear pairing $\kappa$ of degree $1$. From $\kappa$, we may define a new invariant bilinear form $\wt\kappa(x, y)=\kappa (x, y)(-1)^{|x|}$. This bilinear form is skew-symmetric. Choose a pair of Lagrangian Lie subalgebras $\fa_\pm$ of $\fh$, the pairing $\wt\kappa$ identifies $\fa_+$ with the 1-shifted dual of $\fa_-$. In particular, this pairing induces cobrackets $\delta_\pm$ on $\fa_\pm$ which are dual to the brackets of $\fa_\mp$ under $\wt\kappa$. Moreover, $\delta_\pm$ are maps of the form
\be
\delta_\pm: \fa_\pm\to \mathrm{Sym}^2(\fa_\pm). 
\ee
The following was shown in \cite[Proposition 2.3]{NPShifted}. 

\begin{Prop}
    The cobrackets $\delta_\pm$ satisfies
    \be\label{eq:1LBS}
\delta_\pm\otimes 1+1\otimes \delta_\pm (\delta_\pm)=0\in \mathrm{Sym}^3(\fa_\pm), \qquad \delta_\pm ([X, Y])=[\delta_\pm (X), \Delta (Y)]+(-1)^{|X|}[\Delta (X), \delta_\pm (Y)].
    \ee

\end{Prop}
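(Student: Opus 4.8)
The plan is to adapt the classical equivalence between Manin triples and Lie bialgebras to the present $1$-shifted graded setting. The input is the graded Lie algebra $\fh = \fa_+ \oplus \fa_-$ together with the nondegenerate invariant metric $\kappa$ of degree $1$, in which $\fa_\pm$ are complementary Lagrangian subalgebras; by definition $\delta_\pm$ is the transpose, under the skew pairing $\wt\kappa$, of the bracket $[\cdot,\cdot]$ on the opposite Lagrangian $\fa_\mp$, via the identification $\fa_\pm \simeq \fa_\mp^*[-1]$. With this description the two identities in \eqref{eq:1LBS} will come from two different instances of the Jacobi identity: the co-Jacobi relation from the Jacobi identity of $\fa_\mp$, and the cocycle relation from the Jacobi identity of the full $\fh$ involving both Lagrangians.

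First I would treat the co-Jacobi identity. Since $\delta_+$ is the $\wt\kappa$-transpose of the bracket on $\fa_-$, I would check on a homogeneous basis that transposing a graded-antisymmetric bracket against the skew form $\wt\kappa$ produces a map landing in $\Sym^2(\fa_+)$ rather than $\wedge^2(\fa_+)$ — this is precisely the effect of the degree-$1$ shift. Then the graded Jacobi identity for $\fa_-$, stated as the vanishing of the cyclic sum of $[[\cdot,\cdot],\cdot]$ on $\fa_-^{\otimes 3}$, transposes term by term under $\wt\kappa^{\otimes 3}$ to the relation $(\delta_+\otimes 1 + 1\otimes\delta_+)\,\delta_+ = 0$ in $\Sym^3(\fa_+)$, which is the first equation of \eqref{eq:1LBS}.

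Next I would derive the cocycle condition. Fixing $X,Y \in \fa_+$ and pairing against arbitrary $\xi,\eta \in \fa_-$, invariance of $\kappa$ gives $\langle \delta_+([X,Y]), \xi\otimes\eta\rangle = \langle [X,Y], [\xi,\eta]\rangle$. I would then expand using the full Jacobi identity in $\fh$ applied to the mixed brackets $[X,\xi]$ and $[Y,\eta]$, decomposing these into their $\fa_+$- and $\fa_-$-components along the Lagrangian splitting. Invariance of $\kappa$ converts the $\fa_-$-component of a mixed bracket back into the adjoint action appearing in $[\delta_+(X),\Delta(Y)]$, where $\Delta(Y)$ denotes the extension of $\mathrm{ad}_Y$ to $\Sym^2(\fa_+)$ as a graded derivation. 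Matching the two sides yields exactly the second equation of \eqref{eq:1LBS}, including the sign $(-1)^{|X|}$.

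The main obstacle is the sign bookkeeping. Because $\kappa$ has cohomological degree $1$ and $\wt\kappa$ carries the extra factor $(-1)^{|x|}$, every transpose and every interchange of tensor factors introduces a Koszul sign; the two subtle points are (i) confirming that the target of $\delta_\pm$ is the symmetric rather than the exterior square, and (ii) reproducing the precise prefactor $(-1)^{|X|}$ in the cocycle term. I would handle these uniformly by fixing once and for all the conventions for the shifted dual $\fa_\pm \simeq \fa_\mp^*[-1]$ and for the derivation extension $\Delta$, and verifying the signs on homogeneous basis vectors; once the conventions are pinned down, both identities are forced by graded duality.
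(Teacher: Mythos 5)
The paper does not actually prove this proposition; it is quoted from \cite[Proposition 2.3]{NPShifted}, so there is no internal argument to compare against. Judged on its own terms, your strategy is the correct and standard one: it is the shifted analogue of the classical Manin-triple-to-Lie-bialgebra direction, with the co-Jacobi identity obtained by transposing the Jacobi identity of $\fa_\mp$ through $\wt\kappa$, and the cocycle identity obtained by combining invariance of $\kappa$ with the Jacobi identity of $\fh$ applied to mixed brackets and the Lagrangian decomposition. Your observation that the degree-$1$ skew pairing forces the cobracket to land in $\Sym^2(\fa_\pm)$ rather than $\wedge^2(\fa_\pm)$ is exactly the right structural point, and it also explains why the cyclic three-term Jacobi sum of $\fa_\mp$ collapses to the two-term expression $(\delta_\pm\otimes 1+1\otimes\delta_\pm)(\delta_\pm)$ once the target is symmetrized. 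Two small caveats: the identity $\langle\delta_+([X,Y]),\xi\otimes\eta\rangle=\langle[X,Y],[\xi,\eta]\rangle$ is the definition of $\delta_+$ applied to $[X,Y]$, not a consequence of invariance (invariance enters only in the next step, when you move $Y$ across the pairing and convert the $\fa_-$-components of $[Y,\xi]$ into adjoint-action terms); and the proposal defers all the Koszul sign verifications, including the prefactor $(-1)^{|X|}$, to an unperformed basis computation. Since the signs are the only place a shifted argument can genuinely fail, the proof is a sound plan rather than a complete verification, but the route is the right one and matches what the cited reference carries out.
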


In \cite{pimenov} and \cite{NPShifted}, a Lie algebra $\fa$ together with a cobracket $\delta$ satisfying equation \eqref{eq:1LBS} is called a 1-shifted Lie bialgebra, and the triple $(\fh,\fa_+, \fa_-)$ is called the Manin-trimple corresponding to $\fa_+$. It was shown in \cite[Theorem 2.5]{NPShifted} that a 1-shifted Lie bialgebra $\fa$ gives rise to a canonical Manin-triple $(D(\fa), \fa, \fa^*[-1])$, and vice versa. 

In \cite{NPShifted}, it was explained that from such a Manin triple, one can naturally associate a graded Hopf algebra, whose construction we now recall. Let $\{x_i\}$ be a set of basis for $\fa_+$ with dual basis $\{c^i\}$ under $\wt\kappa$, and let $\mathbf{r}=\sum x_i\otimes c^i\in \fa_+\otimes \fa_-$. It turns out that the cobracket $\delta_\fh:= (-\delta_{\fa_+}, \delta_{\fa_-})$ is identified with $[\mathbf{r}, \Delta]$, where $\Delta$ is the symmetric coproduct of $\fh$. Let $U(\fh)$ be the universal enveloping algebra of $\fh$, and let $\C[\delta]$ be a free graded algebra generated by $\delta$ in degree $1$. We denote by $U(\fh, \delta):=\C[\delta]\ltimes U(\fh)$ the semi-direct product, where 
\be
[\delta, X]=\frac{1}{2}[\nabla \mathbf r, X],\qquad X\in U(\fh), \nabla: \fh\otimes \fh\to U (\fh).
\ee
Namely $\delta$ acts as the degree 1 inner differential $\frac{1}{2}[\nabla \mathbf r, -]$. One of the main results of \cite{NPShifted} is the following theorem.

\begin{Thm}\label{Thm:shiftedbialg}

    The algebra $U(\fh, \delta)$ has the structure of a graded Hopf algebra, where the coproduct of $\fh$ is the symmetric coproduct $\Delta$, and the coproduct of $\delta$ is
    \be
\Delta (\delta)=\delta\otimes 1+1\otimes \delta+\mathbf r. 
    \ee
    Moreover, assuming that $\fh_2=0$, then this Hopf algebra $U(\fh, \delta)$ admits a natural Hopf quotient, by the ideal generated by $\delta^2$. 
    
\end{Thm}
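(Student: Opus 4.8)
The plan is to verify the Hopf axioms directly on generators, extending $\Delta$ and $\epsilon$ as algebra maps and $S$ as a graded anti-automorphism, and to reduce each axiom to an identity about the canonical element $\mathbf r$. Since $U(\fh,\delta)=\C[\delta]\ltimes U(\fh)$ is generated by $\fh$ and $\delta$ subject only to the relations of $U(\fh)$ and the crossed relation $[\delta,X]=\tfrac12[\nabla\mathbf r,X]$, it suffices to (i) check that the proposed $\Delta,\epsilon,S$ respect these relations, so that they are well defined, and (ii) check coassociativity, the counit axioms, and the antipode axioms on the generators $\fh$ and $\delta$. All three axioms are already known on the sub-Hopf-algebra $U(\fh)$ with its cocommutative structure, so only the interaction with $\delta$ is new.

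The main computation, and the step I expect to be the principal obstacle, is the well-definedness of $\Delta$: one must show that $\Delta$ carries the crossed relation to an identity in the target. Expanding $[\Delta(\delta),\Delta(X)]$ with $\Delta(\delta)=\delta\otimes1+1\otimes\delta+\mathbf r$ and $\Delta(X)=X\otimes1+1\otimes X$, the cross terms on distinct tensor legs vanish by the Koszul sign rule, leaving $[\delta,X]\otimes1+1\otimes[\delta,X]+[\mathbf r,\Delta(X)]$. On the other hand $\Delta\bigl(\tfrac12[\nabla\mathbf r,X]\bigr)=\tfrac12[\Delta(\nabla\mathbf r),\Delta(X)]$ with $\Delta(\nabla\mathbf r)=\nabla\mathbf r\otimes1+1\otimes\nabla\mathbf r+\mathbf r+\tau(\mathbf r)$, where $\tau$ is the graded flip. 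Matching the two sides reduces the claim to $[\mathbf r-\tau(\mathbf r),\Delta(X)]=0$ for all $X\in\fh$. Since the adjoint action is $\tau$-equivariant this is equivalent to the $\tau$-symmetry of $[\mathbf r,\Delta(X)]$, which is exactly the statement that the cobracket $\delta_\fh(X)=[\mathbf r,\Delta(X)]$ lands in $\Sym^2(\fh)$ --- a property guaranteed by the invariance of the metric $\wt\kappa$ and the Lagrangian splitting $\fh=\fa_+\oplus\fa_-$ recalled above. The delicate part here is purely the sign bookkeeping; the conceptual input is the symmetry of the cobracket.

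Coassociativity on $\delta$ is then easy: since $\mathbf r\in\fh\otimes\fh$ is built from primitive elements, $(\Delta\otimes1)\mathbf r=\mathbf r_{13}+\mathbf r_{23}$ and $(1\otimes\Delta)\mathbf r=\mathbf r_{12}+\mathbf r_{13}$, so both iterated coproducts of $\delta$ equal $\delta\otimes1\otimes1+1\otimes\delta\otimes1+1\otimes1\otimes\delta+\mathbf r_{12}+\mathbf r_{13}+\mathbf r_{23}$. Setting $\epsilon(\delta)=0$ gives the counit axioms at once, as $(\epsilon\otimes1)\mathbf r=0$. For the antipode one is forced to take $S(\delta)=-\delta+\nabla\mathbf r$ so that $m(S\otimes1)\Delta(\delta)=0$, and one checks this choice is compatible with the crossed relation and with the standard antipode on $U(\fh)$; this is routine.

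For the quotient statement, the plan is to show that the two-sided ideal $(\delta^2)$ is a Hopf ideal when $\fh_2=0$. The conditions $\epsilon(\delta^2)=0$ and $S(\delta^2)\in(\delta^2)$ are immediate, so the content is the coideal condition. Computing $\Delta(\delta^2)=\Delta(\delta)^2$, using $(\delta\otimes1)(1\otimes\delta)+(1\otimes\delta)(\delta\otimes1)=0$ and the crossed relation to commute $\delta$ past $\mathbf r$, one finds
\be
\Delta(\delta)^2=\delta^2\otimes1+1\otimes\delta^2+\rho,\qquad \rho=\mathbf r^2+\tfrac12\sum_i[\nabla\mathbf r,x_i]\otimes c^i+\tfrac12\sum_i(-1)^{|x_i|}x_i\otimes[\nabla\mathbf r,c^i],
\ee
with $\rho\in U(\fh)\otimes U(\fh)$. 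By the PBW basis of $U(\fh,\delta)$ the ideal $(\delta^2)$ meets $U(\fh)\otimes U(\fh)$ trivially, so the coideal condition is equivalent to $\rho=0$, i.e. to $\delta^2$ being primitive. This last identity is the real content of the hypothesis $\fh_2=0$: the term $\rho$ is homogeneous of cohomological degree $2$, and I expect it to follow from the shifted classical Yang--Baxter/co-Jacobi identity for $\mathbf r$ (the first relation in \eqref{eq:1LBS}), with the assumption $\fh_2=0$ removing precisely the degree-$2$ terms that would otherwise obstruct the vanishing. I anticipate verifying $\rho=0$ to be the second main obstacle, as it requires unwinding $[\nabla\mathbf r,x_i]$ by the graded Leibniz rule and matching it against $\mathbf r^2$; granting it, $(\delta^2)$ is a Hopf ideal and $U(\fh,\delta)/(\delta^2)$ inherits the Hopf structure.
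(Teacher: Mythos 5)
First, a point of reference: the paper does not prove Theorem \ref{Thm:shiftedbialg} at all --- it is quoted verbatim as ``one of the main results of \cite{NPShifted}'', so there is no in-paper proof to compare against. Judged on its own terms, your outline is the natural direct verification, and the first half is essentially sound: the reduction of well-definedness of $\Delta$ to $[\mathbf r-\tau(\mathbf r),\Delta(X)]=0$, and the deduction of that identity from the fact that $[\mathbf r,\Delta(-)]$ is the cobracket valued in $\mathrm{Sym}^2(\fh)$, is correct, as are the coassociativity and counit checks. One caveat on the part you call ``routine'': compatibility of $S$ with the crossed relation reduces to $S(\nabla\mathbf r)-\nabla\mathbf r=-\sum_i[x_i,c^i]$ being central in $U(\fh)$, which is not a formal triviality and deserves an argument (it is a trace-type identity for the Manin triple, not an automatic consequence of the antipode axioms).

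The genuine gap is in the second half. You correctly reduce the Hopf-ideal property of $(\delta^2)$ to the vanishing of
$\rho=\mathbf r^2+\tfrac12\sum_i[\nabla\mathbf r,x_i]\otimes c^i+\tfrac12\sum_i(-1)^{|x_i|}x_i\otimes[\nabla\mathbf r,c^i]$,
and the PBW argument that $\rho=0$ is also necessary is fine. But you then only ``anticipate'' $\rho=0$, and the mechanism you propose for why $\fh_2=0$ should give it does not work as stated: $\rho$ lives in $U(\fh)\otimes U(\fh)$, not in $\fh\otimes\fh$, so homogeneity in cohomological degree $2$ does not let you invoke $\fh_2=0$ directly. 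The hypothesis enters more indirectly: writing $[\nabla\mathbf r,y]$ for $y\in\fh$ as its symbol $\nabla\delta_\fh(y)$ in $\mathrm{Sym}^2\fh$ plus a lower-order correction $\tfrac12\sum_i[x_i,[c^i,y]]\pm\dots$ lying in $\fh$ itself (in degree $|y|+1$), one finds that the quadratic part of $\rho$ cancels against $\mathbf r^2$ by the co-Jacobi/CYBE identity of \eqref{eq:1LBS}, while the linear corrections land in $\fh_2\otimes\fh_-\oplus\fh_+\otimes\fh_2$-type summands that vanish only under the stated hypothesis. This unwinding is the actual content of the second assertion (in the paper's case $\fh=T^*[-1]\fg$ it reduces to $\sum f^k_{ij}x_k\otimes c^ic^j$ cancelling against $-\sum x_j\otimes\tfrac12 f^j_{ki}c^kc^i$, which does hold), and your proposal stops exactly where that computation begins.
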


\begin{Rem}
    Note that because $\mathbf{r}$ is not symmetric, this Hopf algebra is not cocommutative. The antipode is the standard antipode on $\fh$ and $S\delta=-\delta+\sum x_ic^i$. The inverse of this antipode is $S^{-1}\delta=-\delta+\sum c^ix_i$. 
\end{Rem}

Let us now apply this to $\fh=T^*[-1]\fg$, the 1-shifted tangent Lie algebra of $\fg$, with Lagrangians and $\fa_+=\fg, \fa_-=\fg^*[-1]$. Since $\fg$ is in degree $0$, $\fh_2=0$ and therefore we have a graded Hopf algebra $U(T^*[-1]\fg, \delta)/\delta^2$. We denote this algebra by $H_\fg$. As an algebra, it is generated by $\fg, \fg^*[-1]$ and $\delta$, whose non-trivial brackets are
\be
[x_i, c^j]=f_{ki}^j c^k, \qquad [\delta, c^i]=-\frac{1}{2}\sum f^i_{jk}c^jc^k, \qquad \delta^2=0.
\ee
The bracket of $\delta$ with $c^i$ precisely gives the action of the Chevalley-Eilenberg differential. Namely, $\Ce(\fg)$ defined in Section \ref{subsubsec:HCCE}, is a subalgebra of $H_\fg$. Moreover, a module of $H_\fg$ is simply a module of $\Ce(\fg)$ with a compatible Lie algebra $\fg$ action. Namely
\be
H_\fg\Mod\simeq \Ce(\fg)\Mod (\Rep (\fg)).
\ee
As in equation \eqref{eq:HCCEK}, we would like to replace $\Rep (\fg)$ above by $\Rep (G)$. To do so, we would like to replace the Lie algebra $\fg$ by the algebraic group $G$. To correctly account for the representation theory of the algebraic group, it is necessary to include a natural topology, as in \cite[Section 8.3.4]{DN24}. More precisely, let $\CO_G$ be the Hopf algebra of algebraic functions on $G$. An algebraic $G$ module is precisely a comodule for $\CO_G$. Let $\Dist (G)$ be the linear dual of $\CO_G$, which admits a pro-finite topology dual to the filtration of $\CO_G$ by finite-dimensional subspaces. Being dual to $\CO_G$, $\Dist (G)$ naturally admits a topological Hopf algebra structure, and contains both $U(\fg)$ and the group algebra $\C\cdot G$ as Hopf subalgebras via the following formula
\be\label{eq:DistGg}
\begin{aligned}
& g\in G\mapsto \delta_g (f)=f(g), ~~\forall f\in \CO_G\\
& X\in \fg\mapsto \pd_X (f)=\xi_X^Rf(e), ~~\forall f\in \CO_G
\end{aligned}
\ee
Here the formula $\xi_X^Rf(e)$ means the derivative of $f$ with respect to the left-invariant vector field $X$, evaluated at the identity element $e\in G$. 

Let us now define a topological Hopf algebra $H_G$. As a vector space, it is defined by $H_G=\Dist (G) \otimes \Ce(\fg)$. The algebra $\Ce(\fg)$ is an algebra object in the category $\Rep (G)$, which is identified with comodules of $\CO_G$. Let $\rho: \Ce (\fg)\to \CO_G\otimes \Ce (\fg)$ be the induced coaction, which is a map of algebras. We define an algebra structure on $H_G$ by
\be
(\varphi_1 \otimes \omega_1)* (\varphi_2\otimes \omega_2)= (\varphi_1\varphi_2^{(1)}\otimes \rho (\omega_1)(S\varphi_2^{(2)})\cdot \omega_2),\qquad \varphi_i\in \Dist (G), \omega_i\in \Ce (\fg). 
\ee
Here $\Delta(\varphi_2)=\varphi_2^{(1)}\otimes \varphi_2^{(2)}$ is the Sweedler notation and $\rho (\omega_1)(S\varphi_2^{(2)})$ is the element in $\Ce (\fg)$ given by pairing the first factor of $\rho (\omega_1)$ with $S\varphi_2^{(2)}$. We suggestively write this as $S\varphi_2^{(2)}\rhd \omega_1$, signifying that its the group action on $\Ce(\fg)$.  It is easy to see that this formula is smooth with respect to the topology of $H_G$.

\begin{Lem}
    The above formula makes $H_G$ into a topological algebra. 
\end{Lem}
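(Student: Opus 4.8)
The plan is to recognize $H_G$ as a (topological) smash product, or semidirect product, of the cocommutative Hopf algebra $\Dist(G)$ with the module algebra $\Ce(\fg)$, and to deduce associativity, unitality and continuity from this structural identification. Write $\rhd$ for the left action of $\Dist(G)$ on $\Ce(\fg)$ obtained by dualizing the coaction $\rho$, so that $\varphi\rhd\omega=\langle\varphi,\omega^{(-1)}\rangle\,\omega^{(0)}$ for $\rho(\omega)=\omega^{(-1)}\otimes\omega^{(0)}$. Because the $G$-action is by algebra automorphisms, $\rho$ is a map of algebras, and hence $\Ce(\fg)$ is a left $\Dist(G)$-module algebra: $\varphi\rhd(\omega\omega')=(\varphi^{(1)}\rhd\omega)(\varphi^{(2)}\rhd\omega')$ and $\varphi\rhd 1=\epsilon(\varphi)1$. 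The defining formula then reads $(\varphi_1\otimes\omega_1)*(\varphi_2\otimes\omega_2)=\varphi_1\varphi_2^{(1)}\otimes(S\varphi_2^{(2)}\rhd\omega_1)\,\omega_2$, which is exactly the rule for moving the factor $\varphi_2$ to the left past $\omega_1$ via the commutation relation $\omega\,\varphi=\varphi^{(1)}\,(S\varphi^{(2)}\rhd\omega)$.

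First I would set $\omega\lhd\varphi:=(S\varphi)\rhd\omega$ and check that this is a \emph{right} module-algebra action of $\Dist(G)$ on $\Ce(\fg)$. That $\lhd$ is a right action is immediate from $S$ being an anti-homomorphism, $(\omega\lhd\varphi)\lhd\psi=S\psi\rhd(S\varphi\rhd\omega)=\omega\lhd(\varphi\psi)$. The module-algebra identity $(\omega\omega')\lhd\varphi=(\omega\lhd\varphi^{(1)})(\omega'\lhd\varphi^{(2)})$ follows by combining the left module-algebra property above with $\Delta(S\varphi)=S\varphi^{(2)}\otimes S\varphi^{(1)}$, and here one uses crucially that $\Dist(G)$ is \emph{cocommutative} --- being the dual of the commutative Hopf algebra $\CO_G$ --- so that $\varphi^{(1)}\otimes\varphi^{(2)}=\varphi^{(2)}\otimes\varphi^{(1)}$. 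Granting that $\lhd$ is a right module-algebra action, the product $*$ is the standard right smash product $\Dist(G)\ltimes\Ce(\fg)$, whose associativity is the usual formal Sweedler computation using coassociativity, the fact that $\Delta$ is an algebra map, and the right-action axiom; I would record it but not grind through it. The unit is $\delta_e\otimes 1$, the unit of $\Dist(G)$ tensored with $1\in\Ce(\fg)$: since $\delta_e$ is group-like with $S\delta_e=\delta_e$ and $\delta_e\rhd(-)=\mathrm{id}$, both unit axioms reduce to the counit identities $\varphi^{(1)}\epsilon(\varphi^{(2)})=\varphi=\epsilon(\varphi^{(1)})\varphi^{(2)}$ together with $\epsilon\circ S=\epsilon$.

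It remains to verify that $*$ is continuous for the pro-finite topology. Here I would use that $\fg$ is finite-dimensional, so $\Ce(\fg)$ is finite-dimensional and $H_G=\Dist(G)\otimes\Ce(\fg)$ is a finite sum of copies of the linearly compact space $\Dist(G)$; no completion is needed on the $\Ce(\fg)$-factor. The product is assembled from operations that are each continuous: the multiplication of $\Dist(G)$, its comultiplication $\Delta\colon\Dist(G)\to\Dist(G)\wh\otimes\Dist(G)$, and its antipode, all continuous because $\Dist(G)$ is a topological Hopf algebra (Appendix \ref{app:double}); and the pairing of $\Dist(G)$ against $\CO_G$, which is continuous by definition of the pro-finite topology. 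Since the $G$-action on $\Ce(\fg)$ is rational, $\rho(\omega_1)\in\CO_G\otimes\Ce(\fg)$ is a finite sum whose $\CO_G$-legs span a fixed finite-dimensional subspace, so evaluating the leg $S\varphi_2^{(2)}$ against them produces an element of $\Dist(G)$ continuously in $\varphi_2$. Hence $*$ descends to a continuous map $H_G\wh\otimes H_G\to H_G$, and $H_G$ is a topological algebra.

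The main obstacle is conceptual rather than computational: it is the twist by the antipode $S$ in the defining formula, and recognizing that cocommutativity of $\Dist(G)$ is exactly what promotes $\omega\lhd\varphi=(S\varphi)\rhd\omega$ to a genuine right module-algebra action. Once this is identified the associativity is formal, and the topological claim is routine given the finite-dimensionality of $\Ce(\fg)$ and the rationality of the coaction $\rho$.
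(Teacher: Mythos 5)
Your proof is correct, but it is organized differently from the one printed in the paper, which simply expands both triple products $\lp(\varphi_1\otimes\omega_1)*(\varphi_2\otimes\omega_2)\rp*(\varphi_3\otimes\omega_3)$ and $(\varphi_1\otimes\omega_1)*\lp(\varphi_2\otimes\omega_2)*(\varphi_3\otimes\omega_3)\rp$ in Sweedler notation, using only that $\rho$ is an algebra map, and checks that the two expressions coincide. What you do instead is isolate the structural reason the computation closes: you show that $\omega\lhd\varphi:=(S\varphi)\rhd\omega$ is a right module-algebra action of $\Dist(G)$ on $\Ce(\fg)$, so that $*$ is a standard smash product, and you correctly identify that this step uses $\Delta(S\varphi)=S\varphi^{(2)}\otimes S\varphi^{(1)}$ together with the cocommutativity of $\Dist(G)$ (dual to the commutativity of $\CO_G$) — a hypothesis the paper's direct computation uses silently when it distributes $S\varphi_3^{(2)}$ over the product $(S\varphi_2^{(2)}\rhd\omega_1)\cdot\omega_2$ with the legs in the order $S\varphi_3^{(21)},S\varphi_3^{(22)}$. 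Your route buys a cleaner conceptual statement and makes explicit where the argument would break for a non-cocommutative replacement of $\Dist(G)$; the paper's route avoids introducing the auxiliary right action and keeps everything self-contained in one display. Your viewpoint is in fact the one endorsed by the remark immediately following the paper's proof, which observes that $H_G$ is the semi-direct product $\Dist(G)\ltimes\Ce(\fg)$ and that one could alternatively check the relations on the dense image of $\C\cdot G$. Your treatment of the unit and of continuity (finite-dimensionality of $\Ce(\fg)$ plus rationality of $\rho$, so that all completions live on the $\Dist(G)$ factors) is sound and slightly more careful than the paper's one-line assertion of smoothness.
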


\begin{proof}
    We just need to prove associativity. Let $\varphi_i \otimes \omega_i\in H_G$ be given, for $1\leq i\leq 3$, we have
    \be
    \begin{aligned}
\lp  (\varphi_1 \otimes \omega_1)* (\varphi_2\otimes \omega_2)\rp * (\varphi_3\otimes\omega_3)&=(\varphi_1\varphi_2^{(1)}\otimes (S\varphi_2^{(2)}\rhd \omega_1)\cdot \omega_2) * (\varphi_3\otimes \omega_3)  \\ &=(\varphi_1\varphi_2^{(1)}\varphi_3^{(1)}\otimes (S\varphi_3^{(2)}\rhd \wt\omega_1)\cdot \omega_3)
    \end{aligned}
    \ee
    where $\wt\omega_1=(S\varphi_2^{(2)}\rhd \omega_1)\cdot \omega_2$. Since $\rho$ is a map of algebras, we have
    \be
S\varphi_3^{(2)}\rhd \wt\omega_1=\rho (\rho(\omega_1)(S\varphi_2^{(2)}))(S\varphi_3^{(21)})\rho (\omega_2)(S\varphi_3^{(22)})=(S(\varphi_2^{(2)}\varphi_3^{(21)})\rhd\omega_1)(S\varphi_3^{(22)}\rhd \omega_2). 
    \ee
    Therefore we have
    \be
\lp  (\varphi_1 \otimes \omega_1)* (\varphi_2\otimes \omega_2)\rp * (\varphi_3\otimes \omega_3)= \varphi_1\varphi_2^{(1)}\varphi_3^{(1)}\otimes (S(\varphi_2^{(2)}\varphi_3^{(21)})\rhd\omega_1)(S\varphi_3^{(22)}\rhd \omega_2)\omega_3. 
    \ee
    On the other hand, we have
    \be
\begin{aligned}
    (\varphi_1 \otimes \omega_1)* \lp  (\varphi_2 \otimes \omega_2)* (\varphi_3\otimes \omega_3)\rp&= (\varphi_1\otimes \omega_1)* (\varphi_2\varphi_3^{(1)},  (S\varphi_3^{(2)}\rhd \omega_2)\cdot \omega_3)\\ &= \varphi_1\varphi_2^{(1)}\varphi_3^{(11)}\otimes  (S(\varphi_2^{(2)}\varphi_3^{(12)})\rhd \omega_1)(S\varphi_3^{(2)}\rhd \omega_2)\omega_3.
\end{aligned}
    \ee
    These two expressions coincide. 
\end{proof}

\begin{Rem}
    A simpler proof would be to recognize that the map $\C\cdot G\to \Dist (G)$ is dense, and therefore we only need to check the relations on $\C\cdot G$, which is clear. The algebra $H_G$ is the semi-direct product of $\Dist (G)$ with $\Ce (\fg)$, and we will use the notation $\Dist (G)\ltimes \Ce (\fg)$ in what follows. 
\end{Rem}

The relation between $H_G$ and $H_\fg$ is clear. The embedding $U(\fg)\to \Dist (G)$, together with the unit map of $\Ce (\fg)$, extends to a map of algebras from $H_\fg$ to $H_G$. The following is a simple proposition, whose proof we omit. 

\begin{Prop}
    The coproduct on $\Ce (\fg)$ defined in $H_\fg$ makes $H_G$ into a topological Hopf algebra, and the embedding $H_\fg\to H_G$ an embedding of Hopf algebras. 
    
\end{Prop}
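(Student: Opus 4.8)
The plan is to define the coproduct on $H_G = \Dist(G)\ltimes \Ce(\fg)$ as the unique continuous algebra map extending the standard (topological) coproduct of $\Dist(G)$ together with the coproduct inherited from $H_\fg$ on the generators of $\Ce(\fg)$, namely $\Delta c^i = c^i\otimes 1 + 1\otimes c^i$ and $\Delta\delta = \delta\otimes 1 + 1\otimes \delta + \mathbf{r}$, where in $\mathbf{r} = \sum_i x_i\otimes c^i$ the elements $x_i\in\fg$ are now read inside $\Dist(G)$ via \eqref{eq:DistGg}. Since $\fg$ is finite dimensional, $\mathbf{r}$ is a genuine element of $H_G\wh\otimes H_G$ and the only source of completion is the $\Dist(G)$-factor, so $\Delta$ is automatically smooth. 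Following the remark after the associativity lemma above, I would verify all the bialgebra identities on the dense subalgebra $\C\cdot G\ltimes \Ce(\fg)$, i.e. only for group-like $\varphi = \delta_g$.

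The central point is that $\Delta$ respects the cross relations of the semidirect product. First I would check $\delta_g\, c^i = (g\rhd c^i)\,\delta_g$: using $\Delta\delta_g = \delta_g\otimes\delta_g$ and primitivity of $c^i$, both $\Delta(\delta_g c^i)$ and $\Delta((g\rhd c^i)\delta_g)$ expand to $\delta_g c^i\otimes \delta_g + \delta_g\otimes \delta_g c^i$, the equality being exactly the $G$-equivariance of the primitive coproduct of $c^i$. The subtler relation is $\delta_g\,\delta = \delta\,\delta_g$ (valid since $\delta$ is $G$-invariant, $\Ce(\fg)$ being an algebra object in $\Rep(G)$ with the Chevalley–Eilenberg differential invariant). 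Expanding $\Delta\delta_g\cdot\Delta\delta$ and $\Delta\delta\cdot\Delta\delta_g$ and cancelling the matching $\delta\otimes 1,\ 1\otimes\delta$ terms using $\delta_g x_i = (\mathrm{Ad}_g x_i)\delta_g$ and $\delta_g c^i = (g\rhd c^i)\delta_g$, the two expressions agree if and only if $\sum_i (\mathrm{Ad}_g x_i)\otimes (g\rhd c^i) = \sum_i x_i\otimes c^i$; that is, $\mathbf{r}$ must be invariant under the diagonal adjoint/coadjoint $G$-action. This holds because $\mathbf{r}$ is the canonical element attached to the dual bases $\{x_i\},\{c^i\}$, whose pairing is $G$-invariant. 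I expect this $\mathrm{Ad}$-invariance of $\mathbf{r}$ to be the main obstacle: it is the one nontrivial input tying the coproduct of $\delta$ to the group structure of $\Dist(G)$.

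The remaining relations are inherited. Those internal to $\Dist(G)$ are respected since its coproduct is already a homomorphism; the relations among $c^i$, $\delta$ and the $x_i$ (the exterior relations, $\delta^2=0$, $[\delta, c^i] = -\frac{1}{2}f^i_{jk}c^jc^k$ and $[x_i, c^j] = f^j_{ki}c^k$), together with coassociativity of $\Delta$ on $\delta$, are precisely the content of Theorem \ref{Thm:shiftedbialg} for $H_\fg$ and transport verbatim, since the assignment sends generators to generators with matching coproducts. For the counit I would take $\varepsilon$ standard on $\Dist(G)$ and zero on $c^i,\delta$, and check the axiom on generators (e.g. $(\varepsilon\otimes 1)\Delta\delta = \delta$ because $\varepsilon(x_i)=0$). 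For the antipode I would take $S$ standard on $\Dist(G)$ with $Sc^i = -c^i$ and $S\delta = -\delta + \sum_i x_i c^i$, verifying the anti-homomorphism property and the antipode identity on $\delta_g$ and on the $\Ce(\fg)$-generators, which again reduces to the invariance of $\mathbf{r}$.

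Finally, the embedding $H_\fg\to H_G$ induced by $U(\fg)\hookrightarrow\Dist(G)$ and the identity on $\Ce(\fg)$ intertwines all structure maps on generators by construction, hence is a morphism of (topological) Hopf algebras; its injectivity follows from the vector-space decompositions $H_\fg\cong U(\fg)\otimes\Ce(\fg)$ and $H_G\cong\Dist(G)\otimes\Ce(\fg)$ together with injectivity of $U(\fg)\hookrightarrow\Dist(G)$. Thus the only genuinely new verifications beyond Theorem \ref{Thm:shiftedbialg} are the two cross relations with group-like elements, both of which collapse to the $\mathrm{Ad}$-invariance of $\mathbf{r}$, while all completion issues are handled exactly as in Appendix \ref{app:double}.
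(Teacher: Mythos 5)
The paper omits the proof of this proposition ("whose proof we omit"), but your argument is correct and follows exactly the route the surrounding text suggests: check the cross relations on the dense subalgebra $\C\cdot G\ltimes \Ce(\fg)$, import the relations internal to $H_\fg$ from Theorem \ref{Thm:shiftedbialg}, and observe that the only genuinely new input is the invariance of $\mathbf r=\sum_i x_i\otimes c^i$ under the diagonal adjoint/coadjoint action of $G$ (the same computation that underlies Lemma \ref{Lem:AGGcom}). Your identification of that invariance as the crux, and your verification of it via basis-independence of the canonical element, are both sound, so the proposal correctly supplies the omitted proof.
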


\subsubsection{An equivalence of monoidal categories}

It is clear from the definition of $H_G$ that a smooth module of $H_G$ is simply an algebraic representation of $G$, equipped with a compatible action of $\Ce(\fg)$. Namely, we have an equivalence of abelian categories:
\be
H_G\Mod\simeq \Ce(\fg)\Mod (\mathrm{Rep}(G))\simeq \CA_{\fg, G}\Mod. 
\ee
Note that the $\C^\times$ action on $\Ce(\fg)$ extends to an action on $H_G$. Proposition \ref{Prop:HCKKoszul} can be re-stated in terms of $H_G$ as an equivalence of triangulated categories
\be
D(\HC_\hbar (G)^{\C^\times})\simeq D_{qs} (H_G\Mod^{\C^\times}).
\ee
We now show the following statement. 

\begin{Thm}\label{Thm:HCmonoidal}
 There is an equivalence of triangulated monoidal categories
 \be
D(\HC_\hbar (G)^{\C^\times})\simeq D_{qs} (H_G\Mod^{\C^\times}).
 \ee   
\end{Thm}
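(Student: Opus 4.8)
The plan is to upgrade the triangulated equivalence of Proposition~\ref{Prop:HCKKoszul} (restated above through the identification $H_G\Mod\simeq \CA_{\fg, G}\Mod$) to a monoidal equivalence. Here the source $D(\HC_\hbar(G)^{\C^\times})$ carries the bimodule tensor product $\otimes_{U_\hbar(\fg)}$ inherited from the forgetful functor to $U_\hbar(\fg)\biMod$ (as in Example~\ref{Exp:HCK}), while the target $D_{qs}(H_G\Mod^{\C^\times})$ carries the tensor product $\otimes_\C$ with $H_G$ acting diagonally through its coproduct $\Delta$. I would work with the localization functor $\Phi:=D_{\fh,G}$ of \eqref{eq:adjointHC} (the case $K=G$), which sends an $H_G$-module $W$ to the Harish-Chandra bimodule $U_\hbar(\fg)\otimes W$; since $\Phi$ is already an equivalence, it suffices to make it strong monoidal, and its inverse $\Omega_{\fh,G}$ is then monoidal automatically. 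The unit is immediate: the monoidal unit of $H_G\Mod$ is the trivial module $\C$ (with $\Dist(G)$ acting by the counit and $\Ce(\fg)$ by its augmentation), and $\Phi(\C)=U_\hbar(\fg)$ is the regular bimodule, the unit for $\otimes_{U_\hbar(\fg)}$.

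Next I would build the binary constraint. The essential feature of $\Phi$ is that $\Phi(W)=U_\hbar(\fg)\otimes W$ is \emph{free} as a left $U_\hbar(\fg)$-module, so that the underived tensor already computes the derived one and
\be
\Phi(W_1)\otimes_{U_\hbar(\fg)}^{\mathbb{L}}\Phi(W_2)\;\cong\;(U_\hbar(\fg)\otimes W_1)\otimes_{U_\hbar(\fg)}(U_\hbar(\fg)\otimes W_2)\;\cong\; U_\hbar(\fg)\otimes W_1\otimes W_2
\ee
as graded objects, using freeness of the second factor to collapse the middle copy of $U_\hbar(\fg)$. The right-hand side is exactly the underlying graded object of $\Phi(W_1\otimes W_2)$, so the entire content of the theorem is the assertion that the two \emph{differentials} and $H_G$-module structures on $U_\hbar(\fg)\otimes W_1\otimes W_2$ coincide.

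This differential comparison is the heart of the argument and the step I expect to be the main obstacle. On the $\Dist(G)$-part, the diagonal (conjugation) action matches $\Delta|_{\Dist(G)}$, and on the contraction generators the action is primitive, matching $\Delta(c^i)=c^i\otimes 1+1\otimes c^i$; these are formal. The delicate point is the Chevalley–Eilenberg generator $\delta_{CE}$. Beyond the two internal differentials, the total differential on $\Phi(W_1)\otimes_{U_\hbar(\fg)}\Phi(W_2)$ acquires a cross term produced when the right $\fg$-action on $\Phi(W_1)$ (along which one tensors) is glued to the left action on $\Phi(W_2)$. Tracing this through the differential $\sum_j (x_j)_R\otimes (c^j)_L-(\hbar)_R\otimes(\delta_{CE})_L$ of $DR_{\fh,G}$, one finds that the cross term is precisely the $\mathbf r$-matrix contribution, so that $\delta_{CE}$ acts on $W_1\otimes W_2$ through $\delta_{CE}\otimes 1+1\otimes\delta_{CE}+\mathbf r=\Delta(\delta_{CE})$, the non-cocommutative coproduct of Theorem~\ref{Thm:shiftedbialg}. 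This is the quantized, group-globalized avatar of the $1$-shifted Lie bialgebra computation of \cite{NPShifted}; the bookkeeping of the $\mathbf r$-term (in particular its placement across the two tensor factors), of signs under the Koszul rule, and of the powers of $\hbar$ and $\C^\times$-weights is exactly where care is required.

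Finally I would dispatch coherence and descent. Associativity of the constraint reduces to coassociativity of $\Delta$ (part of the Hopf structure on $H_G$ established above) together with associativity of $\otimes_{U_\hbar(\fg)}$; since every comparison map employed is the canonical one, the pentagon holds formally. The constraint is manifestly $\C^\times$-equivariant, and because each $\Phi(W)$ is flat over $U_\hbar(\fg)$, the monoidal product preserves $\fg$-quasiisomorphisms, so the whole structure descends to the localizations $D_{qs}$. Together these show $\Phi$ is a strong monoidal equivalence, proving the theorem.
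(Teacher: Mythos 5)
Your proposal is correct and follows essentially the same route as the paper: the paper also makes the localization functor $D_{\fh,G}(W)=U_\hbar(\fg)\otimes W$ strong monoidal, collapses $\Phi(W_1)\otimes_{U_\hbar(\fg)}\Phi(W_2)$ to $U_\hbar(\fg)\otimes W_1\otimes W_2$ using freeness, and identifies the cross term in the differential (via the identity $(x_a)_L-\hbar\, d\mathrm{Act}_{\Phi(V)}(x_a)=(x_a)_R-\hbar\, d\mathrm{Act}_V(x_a)$) with the $\mathbf r$-term in $\Delta(\delta)$ from Theorem \ref{Thm:shiftedbialg}. The step you flag as delicate is exactly the computation the paper carries out explicitly.
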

 
\begin{proof}
    The equivalence here is induced by the adjoint pair $(\Omega_{\fg, G}, D_{\fg, G})$ in equation \eqref{eq:adjointHC}. We will show that the functor
    \be
D_{\fg, G}: C(H_G\Mod^{\C^\times})\longrightarrow C(\HC_\hbar (G)^{\C^\times})
    \ee
    can be given a monoidal structure.

    To do so, we need the explicit chain-level model of $D_{\fg, G}$. Let $V=\bigoplus_{i, j} V_{j}^i$ be a complex of $H_G$ modules, where the supscript $i$ denotes homological grading and subscript $j$ denotes the $\C^\times$ weight.  The functor $D_{\fg, G}$ applied to $V$ is given by
\be
D_{\fg, G} (V)=U_\hbar (\fg)\otimes V, \qquad d=d_V+\sum (x_a)_R\otimes \epsilon^a-\hbar\otimes \delta
\ee
Since $D_{\fg, G} (V)$ is an object in $C(\HC_\hbar(G)^{\C^\times})$, it is naturally a bimodule over $U_\hbar (\fg)$, whose right module structure is given by $(x_a)_L-\hbar d\mathrm{Act}_{D_{\fg, G} (V)} (x_a)$, where $d\mathrm{Act}_{D_{\fg, G} (V)} (x_a)$ is the derivative of the action of $G$ on $D_{\fg, G} (V)$. Let $V, W$ be two objects in $C (H_G\Mod^{\C^\times})$, there is a natural identification of the product $D_{\fg, G} (V)\otimes_{\HC}D_{\fg, G} (W)$ with
\be
D_{\fg, G} (V)\otimes_{\HC}D_{\fg, G} (W))=\lp U_\hbar (\fg)\otimes V\rp \otimes_{U_\hbar (\fg)} \lp U_\hbar (\fg)\otimes W \rp\simeq U_\hbar (\fg)\otimes V\otimes W,
\ee
whose $U_\hbar (\fg)$ module structure is given by its left multiplication, whose $G$-action is the obvious diagonal action, and whose differential is given by
\be
d_V+d_W+ \sum (x_a)_R\otimes \epsilon^a_V+\sum \lp (x_a)_L+ \hbar d\mathrm{Act}_{D_{\fg, G} (V)} (x_a) \rp \otimes \epsilon^a_W-\hbar\otimes (\delta_V+\delta_W).
\ee
Here the term $(x_a)_L- \hbar d\mathrm{Act}_{D_{\fg, G} (V)} (x_a)$ comes from the right action of $U_\hbar (\fg)$ on $D_{\fg, G} (V)$. We denote by $J_{V, W}$ the above natural identification. Now if we use the fact that $(x_a)_L- \hbar d\mathrm{Act}_{D_{\fg, G} (V)} (x_a)= (x_a)_R-\hbar d\mathrm{Act}_V(x_a)$, we find that the above differential becomes
\be
J_{V, W}(d)= d_V+d_W+ \sum (x_a)_R\otimes (\epsilon^a_V+\epsilon^a_W)- \hbar\otimes (\delta_V+\delta_W+d\mathrm{Act}_V(x_a)\otimes \epsilon^a_W).  
\ee
Comparing this with the coproduct of Theorem \ref{Thm:shiftedbialg}, we find that $J_{V, W}(d)$ is precisely the differential on $D_{\fg, G} (V\otimes_H W)$, namely
\be
J_{V, W}(d)=d_V+d_W+\sum (x_a)_R\otimes \Delta (\epsilon^a)-\hbar \otimes \Delta (\delta). 
\ee
Consequently, $J_{V, W}$ gives a natural identification
\be
J_{V, W}: D_{\fg, G} (V\otimes_H W)\to D_{\fg, G}(V)\otimes_{\HC} D_{\fg, G} (W).
\ee
To show that $(D_{\fg, G}, J)$ is a monoidal functor, we need to show that $J$ preserves associativity, which is clear from the construction. The descent of this monoidal functor to the category $D_{qs}(H_G\Mod^{\C^\times})$ gives the monoidal equivalence.

\end{proof}

\subsection{Hopf algebras from de-Rham complexes}

\subsubsection{de-Rham complex of a group}

We consider Example \ref{exp:G}. The multiplication map $m_G: G\times G\to G$ induces a co-product on the de-Rham complex of $G$, and similarly on $\CA_G$. We now identify this coproduct. We can identify $\Omega_G$ with functions on $T[1]G=G\ltimes \fg[1]$, where we identify $\fg[1]$ with right-invariant vector fields. 

\begin{Lem}\label{Lem:Ogroup}
    Under the identification $T[1]G=G\ltimes \fg[1]$, the group structure is identified with the extension of the group $G$ by its adjoint module $\fg[1]$. 
\end{Lem}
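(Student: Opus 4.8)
The plan is to compute the coproduct on $\Omega_G$ directly by dualizing the multiplication map $m_G : G \times G \to G$, and to match the result with the group law on $G \ltimes \fg[1]$. The first step is to fix the identification $T[1]G \cong G \ltimes \fg[1]$ explicitly: a point of $T[1]G$ over $g \in G$ is a tangent vector, and I trivialize the tangent bundle using \emph{right}-invariant vector fields, so that the fibre $T_g G$ is identified with $\fg$ via $\xi \mapsto (dR_g)^{-1}\xi$ (equivalently, by writing $T[1]G = G \times \fg[1]$ with $\fg[1]$ the right-invariant forms, as set up in Example \ref{exp:G}). Under this trivialization I must track how the differential $dm_G$ acts, since the coproduct on $\Omega_G$ is exactly $m_G^*$ and the induced map on $T[1]$ is $dm_G$ together with its action on $\fg[1]$.

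Next I would carry out the tangent-space computation. Differentiating $m_G(g_1,g_2) = g_1 g_2$ and using right-invariant trivializations, a tangent vector at $(g_1,g_2)$ given by $(X_1, X_2) \in \fg \oplus \fg$ maps under $dm_G$ to a tangent vector at $g_1 g_2$ whose right-invariant representative is $X_1 + \mathrm{Ad}_{g_1} X_2$. This is the crucial identity: the first slot contributes directly, while the second is twisted by the adjoint action of $g_1$. Translating this to the level of the degree-shifted odd coordinates $\fg[1]$, the multiplication on the odd part of $T[1]G$ reads $(g_1, \theta_1)\cdot(g_2,\theta_2) = (g_1 g_2,\ \theta_1 + \mathrm{Ad}_{g_1}\theta_2)$, which is precisely the semidirect product law for the extension of $G$ by the adjoint module $\fg[1]$.

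To finish I would verify that this is a group structure and that it is the \emph{unique} one compatible with $m_G$: associativity follows from the cocycle identity $\mathrm{Ad}_{g_1 g_2} = \mathrm{Ad}_{g_1}\mathrm{Ad}_{g_2}$, the unit is $(e,0)$, and the inverse is $(g^{-1}, -\mathrm{Ad}_{g^{-1}}\theta)$; all of these are exactly the structure maps of $G \ltimes \fg[1]$ with $G$ acting by the adjoint representation on $\fg[1]$. Dually, this says $m_G^* f = \Delta f$ is the coproduct on $\CO_G$ on the even part, extended on the odd generators $c^i \in \fg^*[-1]$ by the coadjoint-twisted Leibniz rule, matching the claim.

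The main obstacle I anticipate is purely bookkeeping rather than conceptual: one must be careful and consistent about \emph{which} trivialization (left- versus right-invariant) is used, because the adjoint twist lands on the first or second factor depending on this choice, and a sign/side error here would spoil the identification of the coproduct and, downstream, the formulas in Example \ref{exp:G}. Getting the convention to agree with $\xi_{x_i}^R$ as used in $\CA_G$ is the delicate point; once the trivialization is pinned down, the adjoint twist $X_1 + \mathrm{Ad}_{g_1} X_2$ makes the identification with the semidirect product immediate.
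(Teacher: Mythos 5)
Your proof is correct and takes essentially the same approach as the paper: both differentiate the multiplication map $m_G$ in a chosen trivialization of the tangent bundle and read off the semidirect product law. The one substantive difference is exactly the convention you flag as delicate, and you land on the opposite side of it from the paper: the paper trivializes by left-invariant vector fields (a tangent vector at $g$ is $\pd_t\vert_{t=0}\, g e^{tv}$, matching the $\xi^R_{x_i}$ convention of Example \ref{exp:G}), computes $f(g_1 e^{tv_1} g_2 e^{tv_2}) = f(g_1 g_2 e^{t g_2^{-1} v_1 g_2} e^{t v_2})$, and obtains the product law $(g_1,v_1)\cdot(g_2,v_2) = (g_1 g_2,\ \mathrm{Ad}_{g_2^{-1}}(v_1) + v_2)$, with the adjoint twist on the \emph{first} factor; your right-invariant trivialization puts it on the second, $(g_1 g_2,\ \theta_1 + \mathrm{Ad}_{g_1}\theta_2)$. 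Both are valid presentations of the extension of $G$ by its adjoint module, so the lemma as literally stated is proved either way. However, the coproduct $\Delta(c) = 1\otimes c + \rho(c)^{op}$ recorded in Corollary \ref{Cor:coprodCAG} is dual to the paper's version of the product law, not yours, so to feed this lemma into the rest of Section \ref{sec:Hopfalgebra} you would need to switch to the $g e^{tv}$ trivialization or correspondingly rewrite the coproduct on the odd generators.
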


\begin{proof}
Given two element $(g_i, v_i)\in T[1]G$, the claim is that their multiplicatoin is given by
\be
(g_1, v_1)\cdot (g_2, v_2)= (g_1g_2, (g_2^{-1}v_1g_2)+v_2).
\ee
To prove this, let $f\in \CO_G$ be a function, we need to compute the value of $(m_G)_*(g_i, v_i)(f)$. By definition, this is equal to
\be
(v_1, v_2)(m_G^*(f))(g_1,g_2)=\pd_t\vert_{t=0} m_G^*(f)(g_1e^{tv_1}, g_2e^{tv_2})=\pd_t\vert_{t=0} f(g_1g_2 e^{tg^{-1}_2v_1g_2}e^{tv_2})
\ee
and the result follows. 
    
\end{proof}

The following corollary is now a consequence of Lemma \ref{Lem:Ogroup} and the functoriality of $D_\hbar$ in Proposition \ref{Prop:DOKoszul}.

\begin{Cor}\label{Cor:coprodCAG}

    The algebra $\CA_G$ is a Hopf algebra whose coproduct is given by
    \be\label{eq:coprodTG}
\Delta (\delta_{dR})=\delta_{dR}\otimes 1+1\otimes \delta_{dR}, \qquad \Delta (f)=m_G^*(f), \qquad \Delta (c)=1\otimes c+\rho (c)^{op},
    \ee
    for all $f\in \CO_G, c\in \fg^*[-1]$. Here $\rho: \fg^*[-1]\to \CO_G\otimes \fg^*[-1]$ is the co-action of $\CO_G$ on $\fg^*[-1]$. Moreover, there is an equivalence of triangulated monoidal categories
    \be
\CD_\hbar (G)^{\C^\times}\simeq D_{qs}(\CA_G\Mod^{\C^\times}). 
    \ee
    
\end{Cor}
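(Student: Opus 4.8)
The plan is to obtain both claims as formal consequences of Lemma \ref{Lem:Ogroup} and the functoriality asserted in Proposition \ref{Prop:DOKoszul}, exploiting that $\CA_G$ is, up to adjoining the de-Rham generator $\delta_{dR}$, the algebra of functions on the group $T[1]G=G\ltimes\fg[1]$. Since $T[1]G$ is a group scheme, $\Omega_G=\CO(T[1]G)$ is automatically a graded-commutative Hopf algebra whose coproduct is dual to the multiplication computed in Lemma \ref{Lem:Ogroup}; the whole task is to read off the three coproduct formulas and to match the resulting tensor product with convolution.

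First I would dualize the group law $(g_1,v_1)\cdot(g_2,v_2)=(g_1g_2,\ g_2^{-1}v_1g_2+v_2)$. Pairing a function $f\in\CO_G$ against the product recovers $\Delta(f)=m_G^*(f)$, and pairing a linear coordinate $c\in\fg^*[-1]$ against the $\fg[1]$-component gives $\langle c,\ g_2^{-1}v_1g_2\rangle+\langle c,v_2\rangle$; the second summand is $1\otimes c$, while the first, being a function of $v_1$ (first factor) and of $g_2$ (second factor) through the coadjoint action, is exactly $\rho(c)^{op}$, with $\rho:\fg^*[-1]\to\CO_G\otimes\fg^*[-1]$ the comodule structure. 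The generator $\delta_{dR}$ is the canonical odd vector field on $T[1](-)$, which under the diagonal is the sum of the two copies, giving $\Delta(\delta_{dR})=\delta_{dR}\otimes 1+1\otimes\delta_{dR}$. Coassociativity, the counit, and the antipode (encoding inversion $(g,v)\mapsto(g^{-1},-gvg^{-1})$ together with $S\delta_{dR}=-\delta_{dR}$) then descend from the associativity, unit, and inversion of $T[1]G$.

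For the monoidal equivalence, the underlying triangulated equivalence is already supplied by Proposition \ref{Prop:DOKoszul} together with its functoriality for pushforwards and smooth pullbacks. The monoidal structure on $\CD_\hbar(G)^{\C^\times}$ is convolution $M\star N=m_{G,*}(M\boxtimes N)$. Under the duality the external product $\boxtimes$ corresponds to restriction along the canonical map $\CA_{G\times G}\to\CA_G\otimes\CA_G$, which carries the single $\delta_{dR}$ to $\delta_{dR}\otimes 1+1\otimes\delta_{dR}$, while by functoriality $m_{G,*}$ corresponds to restriction along $m_G^*:\CA_G\to\CA_{G\times G}$. The composite $\CA_G\xrightarrow{m_G^*}\CA_{G\times G}\to\CA_G\otimes\CA_G$ is precisely the coproduct $\Delta$ read off above, whose $\CO_G$, $\fg^*[-1]$ and $\delta_{dR}$ components reproduce the three formulas; hence restriction along it is the Hopf-theoretic tensor product over $\CA_G$, and convolution is identified with $\otimes$. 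Associativity of the two structures matches because $\Delta$ is coassociative, itself a reflection of the associativity of $T[1]G$.

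The step I expect to be most delicate is not the algebra of the coproduct---since $\Delta$ is realized as a composite of the algebra maps $m_G^*$ and $\CA_{G\times G}\to\CA_G\otimes\CA_G$ it is automatically a homomorphism, and compatibility of $\delta_{dR}$ with the relation $[\delta_{dR},\omega]=d_{dR}\omega$ follows from functoriality of the de-Rham differential under $m_G^*$---but rather the compatibility of all this with the passage to $D_{qs}$. One must check that pushforward along $m_G$ and external products preserve $D$-quasiisomorphisms and $\C^\times$-equivariance, so that the convolution monoidal structure genuinely descends to $D_{qs}(\CA_G\Mod^{\C^\times})$ and there agrees with restriction along $\Delta$; this is where the functoriality clause of Proposition \ref{Prop:DOKoszul} and the descent of $D$-quasiisomorphisms under smooth topology carry the real weight. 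A minor bookkeeping point is confirming that dualizing the conjugation term $g_2^{-1}v_1g_2$ in Lemma \ref{Lem:Ogroup} yields precisely $\rho(c)^{op}$ in the stated orientation.
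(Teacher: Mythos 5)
Your proposal is correct and follows essentially the same route as the paper: the Hopf structure is read off by dualizing the group law of $T[1]G=G\ltimes\fg[1]$ from Lemma \ref{Lem:Ogroup} (with $\delta_{dR}$ handled via functoriality of the de-Rham differential under $m_G^*$), and the monoidal equivalence is obtained from the functoriality clause of Proposition \ref{Prop:DOKoszul} by factoring convolution as external product followed by $m_{G,*}$, whose Koszul-dual composite is restriction along $\Delta$. The only part you treat more briefly than the paper is the identification of $D_\hbar(N_1)\otimes_{\C\lbb\hbar\rbb}D_\hbar(N_2)$ with $D_\hbar$ applied to the external product, which the paper verifies by an explicit comparison of Koszul differentials, but you correctly flag where the real checking lies.
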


\begin{proof}
    First, \eqref{eq:coprodTG} indeed defines a Hopf algebra structure on $\CA_G$. The fact its restriction on $\Omega_G$ is a Hopf algebra structure follows from the fact that $\Omega_G$ is identified with functions on the (DG) group $T[1]G$, and equation \eqref{eq:coprodTG} is simply the corresponding map on the algebra of functions. The fact that it extends to the generator $\delta_{dR}$ follows from the fact that for any map $h: X\to Y$, one has $h^*(d_{dR}\omega)=d_{dR}h^*(\omega)$ for any $\omega\in \Omega(X)$; therefore $m_G^*([\delta_{dR}, -])=[\delta_{dR}\otimes 1+1\otimes \delta_{dR}, m_G^*(-)]$.

    We are left to show that the equivalence of Proposition \ref{Prop:DOKoszul} is one of monoidal categories. To this end, we have the following diagram of categories
    \be
\btik
 C(\CA_G\Mod^{\C^\times})\times C(\CA_G\Mod^{\C^\times})\rar{\varphi}\dar{D_\hbar\times D_\hbar} & C(\CA_{G\times G}\Mod^{\C^\times})\rar{\psi}\dar{D_\hbar} & C(\CA_{G}\Mod^{\C^\times})\dar{D_\hbar}\\
 C(\CM_\hbar (G)^{\C^\times})\times C(\CM_\hbar (G)^{\C^\times})\rar{\hbar_1=\hbar_2} & C(\CM_\hbar (G\times G)^{\C^\times})\rar{m_{G, *}} & C(\CM_\hbar (G)^{\C^\times})
\etik
    \ee
The horizontal diagrams define the monoidal product on the categories involved. The functor $\varphi$ is pullback from $\CA_G\otimes \CA_G$ to $\CA_{G\times G}$ which is identity on $\Omega_{G\times G}$ and maps $\delta_{dR, G\times G}$ to $\delta_{dR}\otimes 1+1\otimes \delta_{dR}$, and the functor $\psi$ is the pullback functor along the coproduct of $\Omega_G$. On the second diagram, one first take the tensor product of modules over $\C\lbb\hbar\rbb$, then pushforward along $m_G$. By Proposition \ref{Prop:DOKoszul}, the second square commutes, we just need to verify that the first diagram commutes. 

Recall from equation \eqref{eq:DRhbar}, the functor $D_\hbar$ sends $N\in C(\CA_G\Mod^{\C^\times})$ to $D_G^\hbar \otimes_{\CO_X} N$ with Koszul differential. Therefore, we have
\be
D_\hbar (N_1)\otimes_{\C\lbb\hbar\rbb} D_\hbar (N_2)=(D_{G}^\hbar\otimes_{\C\lbb\hbar\rbb} D_G^\hbar) \otimes_{\CO_{G\times G}} (N_1\otimes N_2)=D_{G\times G}^\hbar \otimes_{\CO_{G\times G}} (N_1\otimes N_2),
\ee
with differential
\be
\sum v_i^{(1)}\otimes \omega^{i, (1)}+v_i^{(2)}\otimes \omega^{i, (2)}+\hbar \otimes (\delta_{dR}^{(1)}+ \delta_{dR}^{(2)})+d_{N_1}+d_{N_2},
\ee
where the supscripts $(1), (2)$ refer to the action on the first and second factor of $D_G^{\hbar}$ (and $N_1\otimes N_2$). The above DG module coincides with $D_\hbar \varphi(N_1\otimes N_2)$.

The commutativity of the diagram gives the monoidal isomorphism for the functor $D_\hbar$, and the fact that it is associative is clear. The descent of this monoidal functor to $D_{qs}$ gives the desired equivalence at the level of monoidal categories.

\end{proof}

\subsubsection{de-Rham complex of the character stack}

We now consider Example \ref{exp:G/G}. Recall that the category $\CA_{G/G_{ad}}\Mod^{\C^\times}$ is the category of modules of $\CA_G$ with a compatible action of $T[1]G$. To organize this into an algebra and correctly account for the topology of $G$, we consider a topological algebra similar to Section \ref{subsec:HopfHC}. Let us denote by $A_{G/G_{ad}}$ the topological vector space
\be
A_{G/G_{ad}}=\Dist (G)\otimes \bigwedge \!{}^*\fg[1]\otimes \CA_G.
\ee
Define an algebra structure on this space via the commutation relation
\be
 \omega\varphi=\varphi^{(1)}(S\varphi^{(2)}\rhd \omega), \qquad [b, c]=\langle \xi_b^{ad}, c\rangle, \qquad [\delta_{dR}, b]=\mathrm{Lie}_b,
\ee
together with the commutation relations of $\CA_G$. Here $\omega\in \bigwedge \!{}^*\fg[1]$ or $\bigwedge \!{}^*\fg^*[-1]$, $b\in \fg[1], c\in \fg^*[-1]$ and $\mathrm{Lie}_b$ is the action of the Lie algebra element corresponding to $b$ associated with the action of $K$. There is an action of $\C^\times$ under which the weights and grading coincides. It is now clear from the discussion of Example \ref{exp:G/G} that a module of $A_{G/G_{ad}}$ is precisely an object in $\CA_{G/G_{ad}}\Mod^{\C^\times}$. Therefore, we have an equivalence of triangulated categories
\be\label{eq:DOtriequiv}
\CD_\hbar (G/G_{ad})^{\C^\times}\simeq D_{qs}(A_{G/G_{ad}}\Mod^{\C^\times}).
\ee
We now upgrade this to a monoidal equivalence.

\begin{Thm}\label{Thm:cDOmonoidal}
    There is a Hopf algebra structure on $A_{G/G_{ad}}$ extending the Hopf structure of $\CA_G$. Moreover, there is an equivalence of triangulated monoidal categories
    \be
\CD_\hbar (G/G_{ad})^{\C^\times}\simeq D_{qs}(A_{G/G_{ad}}\Mod^{\C^\times}).
    \ee
\end{Thm}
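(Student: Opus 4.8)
The plan is to prove the statement in two stages, closely following the proof of Corollary~\ref{Cor:coprodCAG}: first produce the coproduct on $A_{G/G_{ad}}$ and verify the Hopf axioms, and then identify the geometric convolution with restriction along this coproduct by using the functoriality of $D_\hbar$ from Proposition~\ref{Prop:DOKoszul}.

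For the Hopf structure I would view $A_{G/G_{ad}}=\Dist(G)\ltimes(\bigwedge^*\fg[1]\ltimes\CA_G)$ as the smash product of the Hopf algebra $\CA_G$ (whose coproduct is \eqref{eq:coprodTG}) with the distribution Hopf algebra $\Dist(G)\ltimes\bigwedge^*\fg[1]$ of the graded group $T[1]G$, acting on $\CA_G$ through the Cartan-model action (Lie derivative and contraction). Accordingly I would define $\Delta$ to restrict to \eqref{eq:coprodTG} on $\CA_G$, to the standard coproduct on $\Dist(G)$, and to be primitive on $\fg[1]$, namely $\Delta(b)=b\otimes1+1\otimes b$. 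The real content is that this extends to an algebra map, i.e. that it is compatible with the defining relations of Example~\ref{exp:G/G}. The relation $[\delta_{dR},b]=\mathrm{Lie}_b$ is immediate because $\mathrm{Lie}_b\in\fg\subset\Dist(G)$ is primitive; the relation $\omega\varphi=\varphi^{(1)}(S\varphi^{(2)}\rhd\omega)$ is precisely the module-algebra compatibility built into the smash product; and the only delicate check is $[b,c]=\langle\xi_b^{ad},c\rangle$, which unwinds to the identity $\sum\langle\xi_b^{ad},c_{(0)}\rangle\otimes c_{(-1)}+1\otimes\langle\xi_b^{ad},c\rangle=m_G^*\langle\xi_b^{ad},c\rangle$ once one writes $\Delta(c)=1\otimes c+\rho(c)^{op}$ and uses that $\rho$ is the coadjoint coaction. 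Coassociativity, counit, and antipode ($Sb=-b$) then follow formally from the two tensor factors.

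For the monoidal equivalence I would make the convolution on $\CD_\hbar(G/G_{ad})^{\C^\times}$ explicit through the correspondence of stacks
\[
(G/G_{ad})\times(G/G_{ad})\xleftarrow{\ p\ }(G\times G)/G_{ad}\xrightarrow{\ m_G\ }G/G_{ad},
\]
where $(G\times G)/G_{ad}$ denotes the diagonal conjugation quotient, $p$ the smooth projection separating the two conjugations, and $m_G$ the conjugation-equivariant multiplication; the monoidal product is $m_{G,*}\,p^{!}(-\boxtimes-)$. I would then assemble the two-square diagram of chain-level categories as in Corollary~\ref{Cor:coprodCAG}. Under $D_\hbar$ and the functoriality of Proposition~\ref{Prop:DOKoszul}, the external product combines the $\Dist(G)\ltimes\bigwedge^*\fg[1]$ and $\CA$ factors into $A_{(G\times G)/G_{ad}}$, the smooth pullback $p^!$ (which restricts the $G\times G$-equivariance to the diagonal) becomes restriction along the coproduct of $\Dist(G)\ltimes\bigwedge^*\fg[1]$, and $m_{G,*}$ becomes restriction along $m_G^*=\Delta_{\CA_G}$, exactly as in Corollary~\ref{Cor:coprodCAG}. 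Interleaving these three maps reproduces the smash coproduct $\Delta$ of the previous paragraph, so the convolution is identified with $\mathrm{Res}_\Delta$; constructing the associated coherence isomorphism $J_{V,W}$ and checking its associativity (as in Theorem~\ref{Thm:HCmonoidal}) and descending to $D_{qs}$ then yields the monoidal equivalence.

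The main obstacle, and the genuinely new feature beyond Corollary~\ref{Cor:coprodCAG}, is the treatment of the diagonal equivariance. In the case of $\CD_\hbar(G)^{\C^\times}$ the middle of the correspondence was simply $G\times G$, so no equivariance bookkeeping arose; here the correspondence passes through $(G\times G)/G_{ad}$, and I must carefully reconcile the ``smooth pullback $=$ induction'' clause of Proposition~\ref{Prop:DOKoszul} with the claim that $p^!$ acts as restriction along the coproduct of the equivariance Hopf algebra, and verify that this interleaves with the $m_G^*$-restriction so as to generate all the cross terms of the smash coproduct rather than a naive tensor coproduct. A secondary point is to check that the functoriality and base change of Proposition~\ref{Prop:DOKoszul} apply to these stacky $\C^\times$-equivariant morphisms and remain compatible with the pro-finite completed tensor products of Appendix~\ref{app:double}, so that forming $A_{(G\times G)/G_{ad}}$ and the ensuing restrictions are well defined at the topological level. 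Once this bookkeeping is in place, the identification of the product with $\mathrm{Res}_\Delta$ is forced, as in the previous cases.
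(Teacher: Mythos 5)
Your proposal follows essentially the same route as the paper: the Hopf structure is the semidirect-product coproduct with $b\in\fg[1]$ primitive (the paper's Lemma \ref{Lem:AGGcom}, whose only nontrivial check is exactly the $[b,c]=\langle\xi_b^{ad},c\rangle$ compatibility you single out), and the monoidal equivalence is obtained by factoring the convolution through the correspondence $(G/G_{ad})^2 \leftarrow (G\times G)/G_{ad}\to G/G_{ad}$ and identifying $p^!$ and $m_*$ with restriction along the two halves of the coproduct, with associativity controlled by base change. The one step you take for granted is that this correspondence computes the Hecke convolution on $G_\Delta\setminus G^2/G_\Delta$ with matching associators, which the paper verifies separately in Lemma \ref{Lem:G/Gassoc}.
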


We devide the proof into steps. 

\begin{Lem}\label{Lem:AGGcom}
    The natural Hopf structure of $\CA_G$ and $\Dist (G)$, together with $\Delta (b)=b\otimes 1+1\otimes b$ and $\epsilon(b)=0$, makes $A_{G/G_{ad}}$ into a topological Hopf algebra. 
\end{Lem}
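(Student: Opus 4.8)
The plan is to realize $A_{G/G_{ad}}$ as a smash (semidirect) product $B\rtimes\Dist(G)$, where $B:=\bigwedge\nolimits^*\fg[1]\ltimes\CA_G$ and the cocommutative Hopf algebra $\Dist(G)$ acts by the conjugation (adjoint) action encoded in the relation $\omega\varphi=\varphi^{(1)}(S\varphi^{(2)}\rhd\omega)$. The strategy is to first promote $B$ to a Hopf algebra in $\Rep(G)$, and then invoke the standard fact that the smash product of a cocommutative Hopf algebra with a Hopf algebra in its module category (a semidirect product in the sense of Molnar) is again a Hopf algebra, with coproduct the multiplicative extension of the two given coproducts. This extension is exactly the one in the statement: $\Delta_{\CA_G}$ on $\CA_G$, the standard coproduct on $\Dist(G)$, and $b$ primitive.

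I would equip $B$ with the coproduct $\Delta_{\CA_G}$ of Corollary \ref{Cor:coprodCAG} together with $\Delta(b)=b\otimes1+1\otimes b$, and check it is an algebra map, i.e.\ that it respects the two cross relations (the relations internal to $\CA_G$ being handled by Corollary \ref{Cor:coprodCAG}). Since $b$ acts as the contraction $\iota_b$ it graded-commutes with $\CO_G$ and with $\bigwedge\nolimits^*\fg[1]$, so the only brackets of $b$ to test are with $\delta_{dR}$ and with $c\in\fg^*[-1]$. The relation $\{\delta_{dR},b\}=\mathrm{Lie}_b$ is immediate: as $\delta_{dR}$ and $b$ are primitive, $\Delta\{\delta_{dR},b\}=\mathrm{Lie}_b\otimes1+1\otimes\mathrm{Lie}_b$, which equals $\Delta(\mathrm{Lie}_b)$ because $\mathrm{Lie}_b=\pd_b$ is a primitive element of $\Dist(G)$.

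The genuinely non-formal step is the relation $\{b,c\}=\langle\xi_b^{ad},c\rangle$. Writing $\rho(c)=\sum_\alpha f_\alpha\otimes c_\alpha$, so that $\Delta(c)=1\otimes c+\sum_\alpha c_\alpha\otimes f_\alpha$, a short computation in $A_{G/G_{ad}}\wh\otimes A_{G/G_{ad}}$ with the Koszul sign rule yields $\Delta\{b,c\}=\sum_\alpha\langle\xi_b^{ad},c_\alpha\rangle\otimes f_\alpha+1\otimes\langle\xi_b^{ad},c\rangle$, and I must match this against $m_G^*\langle\xi_b^{ad},c\rangle$. This is a purely geometric identity that I would prove by evaluating both sides at a point $(g_1,g_2)$. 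The two inputs are: first, $\langle\xi_b^{ad},c\rangle(g)=\langle(\mathrm{Ad}_{g^{-1}}-\mathrm{id})b,\,c\rangle=c(\mathrm{Ad}_{g^{-1}}b)-c(b)$, obtained from $\xi_b^{ad}(g)=\tfrac{d}{dt}\big|_{0}e^{tb}ge^{-tb}$ and left-translation of $c$ to the identity; and second, that $\rho$ is the coadjoint coaction $\sum_\alpha f_\alpha(g)\,c_\alpha(w)=c(\mathrm{Ad}_{g^{-1}}w)$, which is precisely how $\Delta(c)$ arises from the group law $(g_1,v_1)(g_2,v_2)=(g_1g_2,\mathrm{Ad}_{g_2^{-1}}v_1+v_2)$ of Lemma \ref{Lem:Ogroup}. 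Feeding these in, both sides evaluate to $c(\mathrm{Ad}_{g_2^{-1}}\mathrm{Ad}_{g_1^{-1}}b)-c(b)$, the cross terms telescoping, which proves the identity; I expect this telescoping computation to be the main obstacle.

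Finally I would record the $G$-equivariance and assemble the Hopf structure. Conjugation by $G$ is a group automorphism, hence induces Hopf algebra automorphisms of $\CA_G$, and acts on $\fg[1]$ by the adjoint representation preserving the primitive coproduct, so every structure map of $B$ is $G$-equivariant and $B$ is a Hopf algebra in $\Rep(G)$. The compatibility of the multiplicative $\Delta$ with the smash relation then reduces to this equivariance $\Delta(g\cdot\omega)=(g\otimes g)\cdot\Delta(\omega)$ and its infinitesimal form; coassociativity and the counit axiom follow from those of $\CA_G$ and $\Dist(G)$ since $\Delta$ and $\epsilon$ are multiplicative extensions (one checks $\epsilon\langle\xi_b^{ad},c\rangle=\langle\xi_b^{ad},c\rangle(e)=0$, consistent with $\epsilon(b)=0$), and the antipode is the standard smash-product antipode, anti-multiplicative with $S(b)=-b$. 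All the maps involved are continuous for the pro-finite topology, so that $\Delta$ takes values in the completed tensor product $A_{G/G_{ad}}\wh\otimes A_{G/G_{ad}}$; the precise topological bookkeeping is the content of Appendix \ref{app:double}.
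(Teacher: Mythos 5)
Your proof is, in substance, the same as the paper's: the only non-formal step in both is the verification that $\Delta[b,c]=[\Delta(b),\Delta(c)]$, carried out by viewing both sides as functions on $G\times G$, evaluating at $(g_1,g_2)$ using $\xi_b^{ad}=\xi_b^R-\xi_b^L$ and the coadjoint coaction, and watching the cross terms telescope to $\langle(\mathrm{Ad}_{(g_1g_2)^{-1}}-\mathrm{id})b,c\rangle$; the remaining relations are handled exactly as in the paper ($\mathrm{Lie}_b$ primitive in $\Dist(G)$, equivariance for the $\Dist(G)$ cross relations). One caveat on your packaging: the decomposition $A_{G/G_{ad}}=B\rtimes\Dist(G)$ with $B=\bigwedge^*\fg[1]\ltimes\CA_G$ a Hopf algebra in $\Rep(G)$ does not literally hold, because the defining relation $[\delta_{dR},b]=\mathrm{Lie}_b$ sends a commutator of two elements of $B$ into $\Dist(G)$, so $B$ is not a subalgebra and Molnar's smash-product theorem cannot be invoked as stated. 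This does not damage your argument, since your actual verification is relation-by-relation and never uses the smash-product structure except to outsource coassociativity and the antipode --- both of which follow anyway once $\Delta$ is an algebra map that is coassociative and counital on the generating subspaces $\CA_G$, $\Dist(G)$ and $\fg[1]$.
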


\begin{proof}
  We show that $\Delta$ does extend to a map of algebras by showing that it preserves the commutation relation defining $A_{G/G_{ad}}$. It clearly preserves the commutation relation of $\Dist (G)$ with $b, c$ as well as that of $\delta_{dR}$ with $b, c$, using the fact that $\Delta (\mathrm{Lie}_b)=\mathrm{Lie}_b\otimes 1+1\otimes \mathrm{Lie}_b$ in $\Dist (G)$. We therefore only need to show that $\Delta [b, c]=[\Delta(b), \Delta (c)]$. Both are functions on $G\times G$, so let $g_1, g_2\in G$, then
    \be
\Delta[b,c](g_1, g_2)=\langle \xi_b^{ad}, c\rangle (g_1g_2).
    \ee
    We need to calculate this more explicitly. The element $\xi_b^{ad}$ is the adjoint vector field associated to $b$, which is equal to $-\xi_b^L+\xi_b^R$ where $\xi_b^L$ is the right invariant vector field and $\xi_b^R$ is the left-invariant vector field. Since $\xi_b^R$ form a basis for the global sections of vector fields on $G$, we can write
    \be
\xi_b^L=S\otimes 1\rho^\vee (\xi_b^R), \qquad \rho^\vee: \fg\to \CO_G\otimes \fg. 
    \ee
    Therefore, the above evaluates to 
    \be
\langle \xi_b^{ad}, c\rangle (g_1g_2)=-\langle \xi_b^L, c\rangle (g_1g_2)+\langle \xi_b^R, c\rangle (g_1g_2)=\epsilon (g_1g_2) (b, c)-(g_2^{-1}g_1^{-1}b, c)=((1-g_2^{-1}g_1^{-1})b, c). 
\ee
Here $(b,c)$ is the natural pairing between $\fg$ and $\fg^*$. On the other hand, we have
\be
[\Delta(b), \Delta (c)](g_1, g_2)=[1\otimes b, 1\otimes c](g_1, g_2)+[b\otimes 1,\rho(c)^{op}](g_1, g_2). 
\ee
The first term above is equal to $((1-g_2^{-1})b, c)$, while the second term is $((1-g_1^{-1})b, g_2c)=((g_2^{-1}-g_2^{-1}g_1^{-1}) b, c)$. Combining both, we find
\be
[\Delta(b), \Delta (c)](g_1, g_2)= ((1-g_2^{-1}g_1^{-1}) b, c)=\Delta[b, c](g_1, g_2),
\ee
as desired. The action of the antipode $S$ can be read-off from the above coproducts easily. 

\end{proof}

Let us now consider the monoidal structure of $\CD_\hbar (G/G_{ad})^{\C^\times}$. This is given by identifying $G/G_{ad}$ with $G_{\Delta}\setminus G\times G/G_{\Delta}$ and use the convolution monoidal structure of the Hecke category. We can also identify $G\times G$ with $G_\Delta\ltimes G_L$ where $G_L$ is $G\times 1\subseteq G\times G$, and the action of $G_\Delta$ is by conjugation. We now consider the following diagram of correspondence
\be
\btik
(G_L/G_\Delta)^2 &\arrow[l, swap, "p"] \rar{m} G_L^2/G_\Delta & G_L/G_\Delta
\etik
\ee
Here $p$ corresponds to the diagonal embedding $G_\Delta\to G_\Delta^2$ and $m$ is the multiplication on $G_L^2$. The functor $m_*p^*$ defines a functor
\be
\btik
\lp \CD_\hbar (G_L/G_\Delta)^{\C^\times}\rp^2\rar{m_*p^!} & \CD_\hbar (G_L/G_\Delta)^{\C^\times}.
\etik
\ee
This, together with the obvious associativity morphism, defines a monoidal structure on $G_L/G_\Delta$. We prove the following statement, which also spells out the associativity isomorphism explicitly. 

\begin{Lem}\label{Lem:G/Gassoc}

    The embedding $G_L\to G^2$ identifies the above monoidal structure with the convolution monoidal structure. 
    
\end{Lem}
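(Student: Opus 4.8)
The plan is to carry out the identification entirely at the level of the defining correspondences, and then invoke the functoriality of pushforward and smooth pullback (Proposition \ref{Prop:DOKoszul}) to upgrade this to an identification of the pull-push functors, hence of the monoidal structures. First I would fix explicit coordinates on the decomposition $G\times G=G_\Delta\ltimes G_L$: writing a general element as $((g,g),(h,1))\mapsto(gh,g)$ gives coordinates $(g,h)\in G\times G$ on $G^2$, in which the left $G_\Delta$-action is $(g,h)\mapsto(kg,h)$ and the right $G_\Delta$-action is $(g,h)\mapsto(gk,k^{-1}hk)$. Taking the left quotient sets $g=1$ and leaves the residual right action acting on $h\in G_L$ by conjugation, so that $G_\Delta\setminus G^2/G_\Delta\simeq G_L/G_\Delta=G/G_{ad}$, reproducing the identification used throughout.

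Next I would describe the Hecke convolution space $G_\Delta\setminus G^2\times_{G_\Delta}G^2/G_\Delta$ in these coordinates, writing $(g_1,h_1)$ and $(g_2,h_2)$ for the two factors. The crucial point is the balanced-product convention for the middle $G_\Delta$: it acts on the right of the first factor and on the left of the second by inverse parameters, namely $(g_1,h_1)\mapsto(g_1k_m,k_m^{-1}h_1k_m)$ and $(g_2,h_2)\mapsto(k_m^{-1}g_2,h_2)$. Using the outer left and right copies of $G_\Delta$ I can gauge-fix $g_1=g_2=1$; the residual stabilizer is then a single diagonal copy of $G_\Delta$ (with $k_L=k_m^{-1}$, $k_R=k_m$), acting by simultaneous conjugation $h_i\mapsto k^{-1}h_ik$ on the pair $(h_1,h_2)$. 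This yields a canonical isomorphism of the convolution space with $G_L^2/G_\Delta$ for the diagonal conjugation action, which is exactly the middle term of the correspondence in the statement.

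I would then match the two structure maps in these coordinates. The projection to $(G_\Delta\setminus G^2/G_\Delta)^2$ records the classes $([h_1],[h_2])$, hence is precisely the map $p$ induced by the diagonal $G_\Delta\to G_\Delta^2$. For the product, multiplication in $H=G^2$ reads in $(g,h)$-coordinates as $(g_1,h_1)(g_2,h_2)=(g_1g_2,\,g_2^{-1}h_1g_2h_2)$; restricted to the slice $g_1=g_2=1$ this becomes $[h_1h_2]$, so the convolution multiplication is identified with the map $m$ of the statement. The one computation that must be done honestly here is the invariance of $g_2^{-1}h_1g_2h_2$ under the middle $G_\Delta$ (and then under the residual diagonal): this is exactly where the inverse-parameter convention is forced. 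I expect this left/right/conjugation bookkeeping to be the only real obstacle, since a wrong convention produces opposite conjugations on $h_1$ and $h_2$ and breaks well-definedness of $m$; once the conventions are pinned down the verifications are immediate.

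Finally, since the two correspondences agree as diagrams of stacks, with $p$ smooth (so $p^!$ is defined, the relative quotient $G_\Delta^2/G_\Delta\cong G$ being smooth) and $m$ affine and quasi-compact (so $m_*$ is defined), the functoriality of pushforward and smooth pullback in the asymptotic $\C^\times$-equivariant setting identifies $m_*p^!$ with the Hecke convolution. The associativity constraints coincide as well, because both arise from the same threefold geometric convolution $G_\Delta\setminus G^2\times_{G_\Delta}G^2\times_{G_\Delta}G^2/G_\Delta$ expressed in these coordinates, so the identification is one of monoidal structures. This also sets up the explicit associator to be recorded afterward.
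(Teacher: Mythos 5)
Your proposal is correct and follows the same route as the paper: identify the two correspondence diagrams, then argue that the associators match because both are built from the same threefold convolution. The difference is one of emphasis. You spend most of your effort on the explicit coordinate identification $G^2\simeq G_\Delta\ltimes G_L$, the gauge-fixing of the balanced product, and the verification that $m$ descends (your formula $(g_1,h_1)(g_2,h_2)=(g_1g_2,\,g_2^{-1}h_1g_2h_2)$ and the middle-$G_\Delta$ invariance check are correct, and match Lemma \ref{Lem:Ogroup}); the paper simply asserts this identification via a commutative diagram of stacks with isomorphisms as vertical arrows. Conversely, the paper treats the associativity as the main content: the associator of $m_*p^!$ is by definition a composite of base-change natural isomorphisms $p^!m_{12,*}\to\wt m_{12,*}\wt p^!$ attached to the twofold-over-threefold convolution squares, so the actual thing to check is that the embedding $G_L\to G^2$ identifies the two base-change \emph{squares} (after which the canonicity of base-change does the rest). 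Your one-sentence justification — both associators arise from the same threefold convolution in these coordinates — is the correct reason, but to be a complete argument it should be phrased at the level of those base-change squares rather than just the spaces. Your explicit coordinates are a useful supplement to what the paper leaves implicit; the base-change formulation is what the paper supplies and what you should add.
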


\begin{proof}
    This is a tedious diagram chasing. First, we have the following commutative diagrams
    \be\label{eq:monoidalGG}
\btik
(G_L/G_\Delta)^2 \dar &\arrow[l, swap, "p"] \rar{m} G_L^2/G_\Delta \dar& G_L/G_\Delta\dar\\
(G_\Delta\setminus G^2/G_\Delta)^2 &\rar{\wt m}\arrow[l, swap, "\wt p"] G_\Delta\setminus G^2\times_{G_\Delta} G^2/G_\Delta  & G_\Delta\setminus G^2/G_\Delta
\etik
    \ee
    where the vertical arrows are isomorphisms. This gives the isomorphism of the monoidal products. We need to show that this isomorphism preserves the associativity morphisms. To understand the associativity axiom, we consider the following base-change diagram, which computes $(M\otimes_{G/G_{ad}}N)\otimes_{G/G_{ad}} P$. 
    \be
\btik
 & & \wt C \arrow[dl, swap, "\wt p"]\arrow[dr, "\wt m_{12}"] & & \\
 & C\times X\arrow[dl, swap, "p_{12}"]\arrow[dr, "m_{12}"] & & C \arrow[dl, swap, "p"]\arrow[dr, "m"]  & \\
X^3 & & X^2 & & X
\etik
    \ee
    Heree $X= G_\Delta\setminus G^2/G_\Delta$, $C= G_\Delta\setminus (G^2)^{\times_{G_\Delta} 2}/G_\Delta$, and $\wt C=G_\Delta\setminus (G^2)^{\times_{G_\Delta} 3}/G_\Delta$. Base-change property of smooth morphisms says that there is a natural isomorphism $p^!m_{12, *}=\wt m_{12, *}\wt p^!$, which supplies a natural isomorphism
\be
m_*p^!m_{12, *}p_{12}^!\longrightarrow m_{123, *}p_{123}^!,\qquad p_{123}=p_{12}\wt{p},~ m_{123}=m \wt m_{12}.  
\ee
A similar diagram that calculates $M\otimes_{G/G_{ad}}(N\otimes_{G/G_{ad}} P)$ gives rise to a similar natural isomorphism, 
\be
m_*p^!m_{23, *}p_{23}^!\longrightarrow m_{123, *}p_{123}^!,  
\ee    
and the associativity isomorphism is the composition of these two natural isomorphisms. 

Note that a similar diagram for $G_L/G_\Delta$ exists as well. At this point all we need to show is that the embedding $G_L\to G^2$ identifies the base-change diagram
\be
\btik
G_\Delta\setminus G^2\times_{G_\Delta}G^2\times_{G_\Delta}G^2/G_\Delta \rar{\wt m_{12}}\dar{\wt p}&G_\Delta\setminus G^2\times_{G_\Delta}G^2/G_\Delta \dar{p}\\
G_\Delta\setminus G^2\times_{G_\Delta}G^2/G_\Delta\times  G_\Delta\setminus G^2/G_\Delta \rar{m_{12}} &  (G_\Delta\setminus G^2/G_\Delta)^2
\etik
\ee
with the base-change diagram
\be
\btik
G_L^3/G_\Delta \rar{\wt{m}_{12}}\dar{\wt p}& G_L^2/G_\Delta\dar{p}\\
 G_L^2/G_\Delta\times G_L/G_\Delta \rar{m_{12}}& (G_L/G_\Delta)^2  
\etik
\ee
which is clear. This shows that the identification preserves the associativity isomorphism, as desired.

\end{proof}

Now we proceed to the proof of Theorem \ref{Thm:cDOmonoidal}.

\begin{proof}[Proof of Theorem \ref{Thm:cDOmonoidal}]

    We have defined the algebra $A_{G/G_{ad}}$. One can similarly define algebra $A_{G^n/G_{ad}}$, which are simply the semi-direct product of $\Dist (G)\otimes \bigwedge {}^*\fg[1]$ with $\CA_G^{\otimes n}$ (except there is only one generator $\delta_{dR}$, whose coproduct can be similarly treated as in Corollary \ref{Cor:coprodCAG}). One similarly has an equivalence of triangulated categories
\be
\CD_\hbar (G^n/G_{ad})^{\C^\times}\simeq D_{qs}(A_{G^n/G_{ad}}\Mod^{\C^\times}).
\ee
Note that for an object $M$ in the LHS, the vector space underlying the corresponding object $\Omega(M)$ on the RHS is simply the global section of the de-Rham complex of $M$ on $G^n$. 

Let us consider the top row in the diagram of equation \eqref{eq:monoidalGG}. Under the above equivalence, pull-back along the morphism $p$  corresponds to restriction functor along an algebra morphism
\be
\Omega (p): A_{G^2/G_{ad}}\longrightarrow A_{G/G_{ad}}\wh\otimes  A_{G/G_{ad}}, 
\ee
whose restriction on $\CA_G^{\otimes 2}$ is identity and whose restriction on $\Dist (G)\ltimes \bigwedge \!{}^*\fg[1]$ is the symmetric coproduct. On the other hand, pushforward along $m$ corresponds to restriction along an algebra morphism
\be
\Omega (m):  A_{G/G_{ad}}\longrightarrow  A_{G^2/G_{ad}},
\ee
which is identity on $\Dist (G)\ltimes \bigwedge \!{}^*\fg[1]$ and whose restriction to $\CA_G$ is simply the coproduct of $\CA_G$. Consequently, the composition $\Omega (m)\circ \Omega (p)$ is identical to the coproduct of $A_{G/G_{ad}}$. 

To finish the proof, we need to show that the associativity isomorphisms match under this isomorphism, namely that it is identity. As we have seen from Lemma \ref{Lem:G/Gassoc}, the complication of the associativity isomorphism is encoded in the base-change isomorphism of the diagram
\be
\btik
G_L^3/G_\Delta \rar{\wt{m}_{12}}\dar{\wt p}& G_L^2/G_\Delta\dar{p}\\
 G_L^2/G_\Delta\times G_L/G_\Delta \rar{m_{12}}& (G_L/G_\Delta)^2  
\etik
\ee
If we view this base-change diagram as a diagram of $G_\Delta^2$-equivariant schemes as follows
\be
\btik
G_\Delta\times G_L^3 \rar{\wt{m}_{12}}\dar{\wt p}& G_\Delta\times G_L^2\dar{p}\\
 G_L^2\times G_L \rar{m_{12}}& (G_L)^2  
\etik
\ee
then the base-change isomorphism on the category of $\CA$-modules is identity, since all the maps involved are morphisms of smooth affine varieties and $p$ is a smooth fibration. Now the result follows by applying the canonical equivalence
\be
A_{G_\Delta\times G_L^2/G_\Delta^2}\Mod^{\C^\times}\simeq A_{G_L^2/G_\Delta}\Mod^{\C^\times},
\ee
which is given by taking $T[1]G_\Delta$-invariants for one of the two copies of $G_\Delta$. This completes the proof of the theorem.

\end{proof} 

\subsubsection{Loop space and its double}

 We have seen that $T[1]G=G\ltimes \fg[1]$ has the structure of an algebraic group. Functions on $T[1]G$ can be identified with the Hopf subalgebra $\Omega_G$ of $\CA_G$. Let $\Dist (T[1]G)$ be the linear dual of $\Omega_G$, which admits a topological Hopf algebra structure, whose topology is similar to the topology of $\Dist (G)$. The topological vector space
\be
D(\Omega_G):= \Omega_G \otimes \Dist (T[1]G)
\ee
admits the structure of a topological quasi-triangular Hopf algebra, where the $R$-matrix is the standard $R$-matrix of the double of $\Omega_G$. Namely, if one choose a set of basis $\{f_i\}\in \Omega_G$ with dual basis $\{f^i\}\in \Dist (T[1]G)$, then $\CR_\Omega=\sum (-1)^{|f_i|} f_i\otimes f^i$. 

Let us identify $\Omega_G$ with $\CO_G\otimes \!\bigwedge {}^*\fg^*[-1]$, so that its linear dual is identified with $\Dist (G)\otimes \bigwedge \!{}^*\fg[1]$. The identification is via
\be\label{eq:hopfpairingO}
(f\cdot \omega^I, \varphi\cdot \omega_I)=(f,\varphi)(\omega^I, \omega_I).
\ee
As we shown in \cite[Section 8.4]{DN24}, this is a Hopf pairing and makes $D(\Omega_G)$ into a semi-direct product $\Dist (T[1])G\ltimes \Omega_G$. 

\begin{Lem}\label{Lem:DAemb}

There is an embedding $D(\Omega_G)\to A_{G/G_{ad}}$ of Hopf algebras.

\end{Lem}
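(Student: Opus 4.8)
The plan is to define $\iota\colon D(\Omega_G)\to A_{G/G_{ad}}$ on generators by the tautological inclusions: the Hopf subalgebra $\Omega_G\subseteq \CA_G\subseteq A_{G/G_{ad}}$ maps by the identity, and the Hopf subalgebra $\Dist(T[1]G)=\Dist(G)\otimes\bigwedge^*\fg[1]\subseteq A_{G/G_{ad}}$ maps by the identity as well. As topological vector spaces both $D(\Omega_G)$ and $A_{G/G_{ad}}$ decompose as $\Dist(T[1]G)\wh\otimes\Omega_G$ — the former by the discussion preceding \eqref{eq:hopfpairingO}, the latter because $\CA_G=\Omega_G\oplus\Omega_G\,\delta_{dR}$, so the $\delta_{dR}$-free part of $A_{G/G_{ad}}$ is exactly $\Dist(T[1]G)\wh\otimes\Omega_G$. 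Hence $\iota$ is automatically injective once it is shown to be an algebra homomorphism: it will then be an isomorphism onto the subalgebra of $A_{G/G_{ad}}$ generated by $\Omega_G$ and $\Dist(T[1]G)$, that is, onto the part not involving $\delta_{dR}$.

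To show $\iota$ is an algebra map it suffices to match the cross-relations, since the relations internal to $\Omega_G$ and to $\Dist(T[1]G)$ are their intrinsic Hopf structures and so agree in both algebras. The cross-relations of $D(\Omega_G)$ are the Drinfeld double straightening relations determined by the Hopf pairing \eqref{eq:hopfpairingO}, and I would check them against the defining relations of $A_{G/G_{ad}}$ factor by factor. For $\varphi\in\Dist(G)$ against $\CO_G$ and against $\bigwedge^*\fg^*[-1]$, the double relation produces the (co)adjoint action of $G$ — this is the standard identification $D(\CO_G)\cong\Dist(G)\ltimes_{\mathrm{conj}}\CO_G$ — which is precisely the relation $\omega\varphi=\varphi^{(1)}(S\varphi^{(2)}\rhd\omega)$ together with the conjugation action of $\Dist(G)$ on $\CA_G$ built into $A_{G/G_{ad}}$. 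For $b\in\fg[1]$ against $f\in\CO_G$, the graded Hopf pairing vanishes (the two live in different degrees) and both coproducts then force $b$ and $f$ to commute, matching the absence of such a relation in $A_{G/G_{ad}}$.

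The one genuinely non-trivial case, and the main obstacle, is the relation between $b\in\fg[1]\subseteq\Dist(T[1]G)$ and $c\in\fg^*[-1]\subseteq\Omega_G$. Here $b$ is primitive while $c$ has the non-primitive coproduct $\Delta(c)=1\otimes c+\rho(c)^{op}$ of Corollary \ref{Cor:coprodCAG}, so the double straightening contributes two terms: pairing $b$ with the $1\otimes c$ summand yields the constant $(b,c)$, while pairing it with the coadjoint summand $\rho(c)^{op}$ produces a genuine element of $\CO_G$, the matrix coefficient $g\mapsto(g^{-1}b,c)$. The graded commutator $[b,c]=bc+cb$ (Koszul sign for $|b|=1$, $|c|=-1$) should therefore evaluate to the function $g\mapsto((1-g^{-1})b,c)$, which is exactly $\langle\xi_b^{ad},c\rangle$ by the computation already carried out in the proof of Lemma \ref{Lem:AGGcom}. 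I expect the bookkeeping of the straightening convention together with the coadjoint coaction $\rho$ to be the delicate point; a clean way to organize it is to use the density of $\C\cdot G$ in $\Dist(G)$ and verify the relation on group-likes $g$, where $g\rhd c$ is manifestly the coadjoint action, and then extend by continuity and to Lie-algebra generators.

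Finally, $\iota$ is a coalgebra map: the double's coproduct restricts on $\Omega_G$ and on $\Dist(T[1]G)$ to their intrinsic Hopf coproducts, which are exactly those of $\CA_G$ (Corollary \ref{Cor:coprodCAG}) and of the $\Dist(G)\otimes\bigwedge^*\fg[1]$-part of $A_{G/G_{ad}}$ (Lemma \ref{Lem:AGGcom}). Since $\Delta$ on both sides is an algebra map determined on generators, and the generators match with matching coproducts and counits, $\iota$ intertwines the comultiplications; compatibility of the antipodes is then automatic for a bialgebra map of Hopf algebras. Hence $\iota$ is an embedding of Hopf algebras, as claimed.
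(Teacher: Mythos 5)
Your strategy is the same as the paper's: identify $D(\Omega_G)$ inside $A_{G/G_{ad}}$ generator by generator, observe that the $\Dist(G)$--$\Omega_G$ cross-relations of the double reproduce the semidirect-product relations already built into $A_{G/G_{ad}}$, note that $b\in\fg[1]$ and $f\in\CO_G$ commute because their components pair trivially, and isolate the $[b,c]$ relation as the only genuinely non-trivial check. Your additional remarks (injectivity from the matching topological vector space decompositions, the coalgebra compatibility on generators) are correct and go slightly beyond what the paper records.

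The one place where your proposal falls short is precisely the step you flagged as delicate but then asserted rather than computed. Carrying out the straightening via \eqref{eq:doubleprod}, using $S^{-1}b=Sb=-b$ and the triple coproduct $\Delta\otimes 1\,\Delta(c)=1\otimes 1\otimes c+1\otimes\rho(c)^{op}+\dots$, the paper finds
\be
b\cdot c=-c\cdot b-(c,b)+((-)^{-1}b,c),
\ee
so the graded commutator comes out as the \emph{negative} of $\langle\xi_b^{ad},c\rangle(g)=((1-g^{-1})b,c)$: the two contributions enter with signs opposite to the ones your sketch requires. Consequently the tautological inclusion on $\fg[1]$ is \emph{not} an algebra map; the embedding must send $b\mapsto -b$. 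This costs nothing — $b$ is primitive and $\bigwedge^{\!*}\fg[1]$ is generated in odd degree, so $b\mapsto -b$ extends to a Hopf automorphism of $\Dist(T[1]G)$ and the lemma stands — but your proof as written claims a map that does not satisfy the $[b,c]$ relation. You should either perform the sign bookkeeping explicitly or build the twist $b\mapsto -b$ into the definition of $\iota$ from the start.
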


\begin{proof}

It is clear that the commutation relations of $\Dist (G)$ with $\Omega_G$ are identical in $D(\Omega_G)$ and $A_{G/G_{ad}}$. Let $b\in \fg[1]$ and $c\in \fg^*[-1]$. The coproduct for $b$ is the standard symmetric coproduct, whereas the coproduct for $c$ is given by
\be
\Delta\otimes 1\Delta (c)= 1\otimes 1\otimes c+1\otimes \rho(c)^{op}+1\otimes \Delta (\rho(c)^{op}).
\ee 
Using the formula in \eqref{eq:doubleprod} and the fact that $S^{-1}b=Sb=-b$, we find
\be
b\cdot c=-c\cdot b-(c, b)+((-)^{-1}b, c). 
\ee
Moreover, the element $f$ and $b$ commutes with each other since their components have trivial pairing. Therefore, sending $b\to -b$ gives an embedding of algebras. It is clearly an embedding of Hopf algebras.

\end{proof}

In the next section, we show that the $R$-matrix of $D(\Omega_G)$ gives rise to an $R$-matrix for $A_{G/G_{ad}}$. Although this can be done by direct calculation, we prefer to do go through a different route, by computing the double of $\CA_G$ and $H_G$.

\section{Quasi-triangular structure and Hopf twist}\label{sec:double}

In this section, we show that the algebra $A_{G/G_{ad}}$ is a quasi-triangular Hopf algebra. The idea is to show that it can be identified with a suitable quotient of the double of $\CA_G$. The following is the main statement of this section.

\begin{Thm}\label{Thm:double}

The following two statements hold. 

\begin{enumerate}

\item The double $D(\CA_G)$ is a central extension of $A_{G/G_{ad}}$ by a primitive element $\delta_{dR}^*$ (a suitable dual of $\delta_{dR}$). Therefore $D(\CA_G)/\delta^*=A_{G/G_{ad}}$ is a quasi-triangular Hopf algebra.

\item There is a twist equivalence between $D(\CA_G)$ and $D(H_G)$. 

\end{enumerate}

\end{Thm}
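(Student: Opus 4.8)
The plan is to treat both parts through the explicit description of the Drinfeld double from Appendix \ref{app:double}, formula \eqref{eq:doubleprod}, exploiting that every algebra in sight is assembled from the same elementary pieces $\CO_G$, $\Dist(G)$, $\fg^*[-1]$, $\fg[1]$ together with ``differential'' generators and their duals. For part (1) I would first identify $\CA_G^*$. Since $\CA_G=\Omega_G[\delta_{dR}]/(\delta_{dR}^2)$ with $\Omega_G=\CO_G\otimes\bigwedge^*\fg^*[-1]$, its continuous dual is $\Dist(T[1]G)\otimes\C[\delta_{dR}^*]$, where $\delta_{dR}^*$ (degree $-1$) is dual to $\delta_{dR}$; thus $D(\CA_G)=\CA_G\wh\otimes\CA_G^*$ contains $A_{G/G_{ad}}$ (via Lemma \ref{Lem:DAemb}, adjoined with $\delta_{dR}$) together with the single extra generator $\delta_{dR}^*$. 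The two things to verify are that $\delta_{dR}^*$ is primitive and central. Primitivity is dual to the fact that $\delta_{dR}$ occurs only linearly and additively in products of $\CA_G$: because $\delta_{dR}^2=0$ and the relation $[\delta_{dR},\omega]=d_{dR}\omega$ produces a term in $\Omega_G$ with no new $\delta_{dR}$, the $\delta_{dR}$-coefficient of a product $ab$ is exactly $\langle\delta_{dR}^*,a\rangle\epsilon(b)+\epsilon(a)\langle\delta_{dR}^*,b\rangle$. Centrality then follows from the cross-relations in the double: $\delta_{dR}^*$ pairs nontrivially only with the primitive $\delta_{dR}$, so its double commutator with any generator of $\CA_G$ is governed by that generator's coproduct; for $\Omega_G$ the coproduct lands in $\Omega_G\otimes\Omega_G$ and the pairing vanishes, while for $\delta_{dR}$ itself the two contributions cancel. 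Hence $\delta_{dR}^*$ generates a central primitive Hopf subalgebra and $D(\CA_G)$ is a central extension of $D(\CA_G)/(\delta_{dR}^*)$.

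I would then identify $D(\CA_G)/(\delta_{dR}^*)$ with $A_{G/G_{ad}}$ by matching generators and coproducts: the surviving $\Omega_G$, $\delta_{dR}$, $\Dist(G)$ and $\bigwedge^*\fg[1]$ carry precisely the relations of $A_{G/G_{ad}}$, and the coproduct of $c\in\fg^*[-1]$ inherited from $\CA_G$, namely $1\otimes c+\rho(c)^{op}$, agrees with the one fixed in Corollary \ref{Cor:coprodCAG}; this is essentially Lemma \ref{Lem:DAemb} together with the adjunction of $\delta_{dR}$. Quasi-triangularity of the quotient is then automatic: for the surjection $\pi\colon D(\CA_G)\to A_{G/G_{ad}}$ the element $(\pi\otimes\pi)(\CR)$ satisfies the quasi-triangularity axioms by naturality, and since $\pi(\delta_{dR}^*)=0$ the term $\delta_{dR}\otimes\delta_{dR}^*$ of the canonical $\CR$ drops out, leaving a well-defined $R$-matrix on $A_{G/G_{ad}}$.

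For part (2) the strategy is to realize $D(\CA_G)$ and $D(H_G)$ on the same underlying topological algebra and then exhibit a Drinfeld twist relating their coproducts and $R$-matrices. Writing out both doubles, each is generated by $\CO_G$, $\Dist(G)$, $\{c^i\}\subseteq\fg^*[-1]$, $\{b_i\}\subseteq\fg[1]$ and two odd generators $\delta,\delta^*$; comparing the cross-relations from \eqref{eq:doubleprod} (after the sign normalization $b\mapsto-b$ of Lemma \ref{Lem:DAemb}) yields an isomorphism of topological algebras under which the two differentials are \emph{not} matched on the nose but differ by the inner term $\nabla\mathbf{r}=\sum_i x_ic^i$, reflecting the splitting of the de-Rham differential in the left-invariant framing $[\delta_{dR},f]=\sum\xi^R_{x_i}(f)c^i$ of Example \ref{exp:G} into its Chevalley--Eilenberg part and its function part, i.e.\ $\delta_{dR}=\delta_{CE}+\nabla\mathbf{r}$. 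With this identification the coproducts disagree in a precise and localized way: in $D(\CA_G)$ the generator $c$ is coaction-twisted ($\Delta c=1\otimes c+\rho(c)^{op}$) and $\delta_{dR}$ is primitive, whereas in $D(H_G)$ the generator $c$ is primitive and $\delta_{CE}$ carries the extra term $\mathbf{r}=\sum x_i\otimes c^i$ of Theorem \ref{Thm:shiftedbialg}. I would produce the twist $J$ from the canonical copairing element between $\CO_G$ and $\Dist(G)$ inside the double, which implements the coadjoint coaction $\rho$ on the $c^i$ and, together with the shift $\delta_{dR}=\delta_{CE}+\nabla\mathbf{r}$, transports the $\mathbf{r}$-term from the $\delta$-coproduct to the $c$-coproduct, and then check $\Delta_{\CA}=J\Delta_H(-)J^{-1}$ on generators and $\CR_{\CA}=J_{21}\CR_H J^{-1}$.

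The main obstacle is precisely this last step: constructing $J$ explicitly in the completed tensor product $D(H_G)\wh\otimes D(H_G)$ and verifying the $2$-cocycle equation $(J\otimes 1)(\Delta\otimes 1)J=(1\otimes J)(1\otimes\Delta)J$ with the counit normalization, while reproducing the coproducts of \emph{all} generators — in particular the interplay of the coaction $\rho$, the shift $\delta_{dR}=\delta_{CE}+\nabla\mathbf{r}$, and the Koszul signs. The conceptual reason the twist must exist, namely that both doubles quantize the same $1$-shifted metric Lie algebra $D(\fg)=\fg\oplus\fg^*[-1]$ in different Lagrangian realizations as anticipated by \cite{beraldo2017loop, BGngo}, provides a valuable consistency check but does not by itself pin down $J$; the genuine work lies in the infinite-dimensional, topological, graded bookkeeping.
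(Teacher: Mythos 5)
Your treatment of part (1) follows the paper's own route: the explicit dual basis of $\CA_G^*$ (Lemma \ref{Lem:CAGbasis}), primitivity and centrality of $\delta_{dR}^*$, identification of the quotient with $A_{G/G_{ad}}$ (Proposition \ref{Prop:DAGAG/G}), and descent of the $R$-matrix to the quotient. Two small corrections: the double is $\CA_G\otimes\CA_G^*$ with \emph{finite} sums, not $\CA_G\wh\otimes\CA_G^*$ (the completed tensor product need not carry an algebra structure, as remarked in Appendix \ref{app:double}); and $A_{G/G_{ad}}$ is not a subalgebra of $D(\CA_G)$ --- the cross-relation $[b,f]=\xi^L_b(f)\,\delta_{dR}^*$, forced by the correction $\Delta_\delta(b)=\Delta(b)+\delta_{dR}^*\otimes\mathrm{Lie}_b$ to the coproduct of $b$ in $\CA_G^*$, is exactly the nontrivial cocycle of the central extension, so the relations of $A_{G/G_{ad}}$ hold only after quotienting. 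Neither point damages your argument for part (1).

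Part (2) is where the genuine gap lies, and you flag it yourself: you name a candidate twist (``the canonical copairing element between $\CO_G$ and $\Dist(G)$'') but neither construct it nor verify the cocycle condition, and your identification of the underlying algebras is off. Proposition \ref{Prop:DHDAiso} shows $D(H_G)\cong D(\CA_G)$ as algebras with $\delta\mapsto\delta_{dR}$ \emph{on the nose} --- the cross-relation $[\delta,f]=[\delta_{dR},f]$ is produced automatically by the $\mathbf{r}$-term of $\Delta(\delta)$ through the multiplication rule \eqref{eq:doubleprod} --- whereas your proposed shift $\delta_{dR}=\delta_{CE}+\nabla\mathbf{r}$ cannot be right: $\nabla\mathbf{r}=\sum x_ic^i$ with $x_i\in\fg\subseteq\Dist(G)$ acts on $\CO_G$ by the adjoint vector fields $\xi^{ad}_{x_i}=\xi^R_{x_i}-\xi^L_{x_i}$, not by the left-invariant fields appearing in $[\delta_{dR},f]$. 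The paper's mechanism for the twist is precisely the content you are missing: one takes $\CF=\CR_G^{21,-1}$ where $\CR_G$ is the $R$-matrix of the Hopf \emph{sub}algebra $D(\CO_G)\subseteq D(\CA_G)$, so the $2$-cocycle condition comes for free from the Yang--Baxter equation for $\CR_G$ (no infinite-dimensional bookkeeping); the equality $\CF\Delta_\delta(-)\CF^{-1}=\Delta_{H_G}(-)$ is then verified only on the generators $c$ and $\delta_{dR}$ by pairing against group elements; and all remaining generators are handled at once by the polarization of the twisted $R$-matrix $\CR_\CF=\exp(-\sum c^i\otimes b_i)\exp(-\delta_{dR}\otimes\delta_{dR}^*)\CR_G^{21}$, which lies in $H_G\wh\otimes D(\CA_G)^{\CF}$ and therefore supplies the Hopf map $H_G^{*,cop}\to D(\CA_G)^{\CF}$ without further computation. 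Without the factorization $\CR_\CA=\CR_G\exp(-\sum c^i\otimes b_i)\exp(-\delta_{dR}\otimes\delta_{dR}^*)$ and this polarization argument, your plan reduces to exactly the unbounded verification you acknowledge you cannot complete.
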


\begin{Rem}
These results are expected as in Theorem \ref{Thm:center=character}. However, we note that Theorem \ref{Thm:double} only implies an abelian equivalence between the centers of the two categories, not the triangulated categories. It is very believable that this equivalence still holds in the triangulated case. We also note that $D_{qs}(D(\CA_G)\Mod^{\C^\times})$ should be the center of the category $\CD_\hbar (G)^{\C^\times}$, which was mis-identified with $D_{qs}(A_{G/G_{ad}}\Mod^{\C^\times})$ in \cite{BGngo}. 

\end{Rem}

\subsection{The double of $\CA_G$}

Let us compute the double of $\CA_G$. We identify $\CA_G$ with the vector space $\CO_G\otimes \bigwedge\!{}^*\fg^*[-1]\otimes \C[\delta_{dR}]$ via multiplication. We can extend the pairing of $\Omega_G$ and $\Dist (T[1]G)$ in equation \eqref{eq:hopfpairingO} to $\CA_G$, and define $\delta_{dR}^*$ by $(f\omega^I\delta_{dR}, \delta_{dR}^*)=\epsilon (f\omega^I)$. 

\begin{Lem}\label{Lem:CAGbasis}
Multiplication of elements of the form
\be
\varphi\cdot \omega_I,\qquad \varphi\cdot\omega_I\cdot \delta_{dR}^*
\ee
inside $\CA_G^*$ spans this topological vector space and identifies it with $\Dist (G)\otimes \bigwedge\!{}^*\fg[1]\otimes \bigwedge\!{}^* \delta_{dR}^*$. 

\end{Lem}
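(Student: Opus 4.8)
The plan is to dualize the decomposition of $\CA_G$ coming from $\delta_{dR}^2=0$ and to match the two stated families of products to the two resulting summands of $\CA_G^*$, exploiting that $\delta_{dR}$ is primitive and $\Omega_G$ is a Hopf subalgebra. First I would record the PBW-type splitting $\CA_G = \Omega_G \oplus \Omega_G\,\delta_{dR}$ of graded vector spaces: the relations $\delta_{dR}^2=0$ and $\delta_{dR}\,\omega = (-1)^{|\omega|}\omega\,\delta_{dR} + d_{dR}\omega$ normal-order every word so that at most one $\delta_{dR}$ survives, on the right. Since $\bigwedge^*\fg^*[-1]$ and the span of $\{1,\delta_{dR}\}$ are finite-dimensional, the only infinite-dimensional factor is $\CO_G$, so in the profinite topology of Appendix \ref{app:double} the continuous dual splits as a direct sum $\CA_G^* = \Omega_G^* \oplus (\Omega_G\,\delta_{dR})^*$, the first summand being the functionals vanishing on $\Omega_G\,\delta_{dR}$ and the second those vanishing on $\Omega_G$. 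By the formula $(f\omega^I\delta_{dR},\delta_{dR}^*) = \epsilon(f\omega^I)$ the element $\delta_{dR}^*$ lies in the second summand, while each $\varphi\in\Dist(G)$ and each $\omega_I\in\bigwedge^*\fg[1]$, coming from $\Omega_G^* = \Dist(T[1]G)$, is taken to vanish on $\Omega_G\,\delta_{dR}$.

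For the first family I would argue as follows. Because $\Omega_G$ is a subcoalgebra, for $x\in\Omega_G$ both legs of $\Delta x$ lie in $\Omega_G$, so the product $(\varphi\cdot\omega_I)(x)$ computed in $\CA_G^*$ agrees with the product in $\Omega_G^*$; by the Hopf pairing \eqref{eq:hopfpairingO} these products already span $\Omega_G^* = \Dist(G)\otimes\bigwedge^*\fg[1]$. Moreover $\varphi\cdot\omega_I$ vanishes on $\Omega_G\,\delta_{dR}$: every term of $\Delta(f\omega^J\delta_{dR}) = \Delta(f\omega^J)\,(\delta_{dR}\otimes 1 + 1\otimes\delta_{dR})$ carries a $\delta_{dR}$ in exactly one leg, on which $\varphi$ or $\omega_I$ vanishes. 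Hence the products $\varphi\cdot\omega_I$ span precisely the first summand.

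The heart of the argument is the second family, which I would handle by the coproduct evaluation $(\varphi\omega_I\cdot\delta_{dR}^*)(x) = \sum (\varphi\omega_I)(x_{(1)})\,\delta_{dR}^*(x_{(2)})$. For $x=f\omega^J$ both legs lie in $\Omega_G$ and $\delta_{dR}^*$ kills the right leg, giving $0$; for $x=f\omega^J\delta_{dR}$ the only surviving contributions are the terms $a_{(1)}\otimes a_{(2)}\delta_{dR}$ arising from $1\otimes\delta_{dR}$, where $\Delta(f\omega^J)=\sum a_{(1)}\otimes a_{(2)}$, on which $\delta_{dR}^*$ returns $\epsilon(a_{(2)})$, so the counit axiom collapses the sum to $(\varphi\omega_I)(f\omega^J)$. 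Thus $\varphi\omega_I\cdot\delta_{dR}^*$ is the functional supported on $\Omega_G\,\delta_{dR}$ that restricts there to $\varphi\omega_I$, and as $\varphi\omega_I$ ranges over a spanning set of $\Omega_G^*$ these span the second summand. Combining the two families yields a topological basis dual to the PBW basis, identifying $\CA_G^*$ with $\Dist(G)\otimes\bigwedge^*\fg[1]\otimes\bigwedge^*\delta_{dR}^*$; the exterior (two-dimensional) factor $\bigwedge^*\delta_{dR}^*$, rather than a polynomial one, reflects $\delta_{dR}^{*2}=0$, which follows from the same computation since no $\Delta x$ carries $\delta_{dR}$ in both legs.

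The main obstacle is bookkeeping rather than conceptual: one must ensure that the passage to the continuous dual is carried out correctly, so that the finite-dimensional $\delta_{dR}$- and $\bigwedge^*\fg^*[-1]$-directions split off cleanly and ``spans'' is read as topologically dense — precisely what the profinite formalism of Appendix \ref{app:double} supplies. Once that is in place, the only step with genuine content is the short coproduct evaluation for $\delta_{dR}^*$, which stays clean because $\delta_{dR}$ is primitive and $\Omega_G$ is a Hopf subalgebra.
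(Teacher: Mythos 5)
Your proposal is correct and follows essentially the same route as the paper: both verify that the stated products form a topological dual basis to the normal-ordered basis $\{f_i\omega^I,\ f_i\omega^I\delta_{dR}\}$ of $\CA_G$ by evaluating coproducts, with your use of the counit axiom to collapse $\sum(\varphi\omega_I)(a_{(1)})\epsilon(a_{(2)})$ being a slightly slicker packaging of the paper's observation that only the $1\otimes\delta_{dR}$ and $\rho(\omega^J)^{op}$ terms contribute. The only thing the paper records that you elide is the precise sign $(-1)^{|f^i|+|\omega_I|}$ making the second family exactly dual to $f_i\omega^I\delta_{dR}$ (needed later for the $R$-matrix), but this is immaterial for the spanning statement of the lemma itself.
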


\begin{proof}

Let us fix a set of basis $\{f_i\}\in \CO_G$ with dual basis $\{f^i\}\in \Dist (G)$, basis $\{\omega^I\}\in \bigwedge \!{}^*\fg^*[-1]$ with dual basis $\{\omega_I\}\in \bigwedge \!{}^*\fg[1]$. We simply need to show that the following two sets
\be
 \{f_i\cdot \omega^I, f_i\cdot\omega^I\cdot \delta_{dR}\}\qquad \{f^i\cdot \omega_I, (-1)^{|f^i|+|\omega_I|}f^i\cdot\omega_I\cdot \delta^*_{dR}\}
\ee
form mutual dual basis under the Hopf pairing. 

It is clear that $f^i\cdot \omega_I$ pairs trivially with $f_i\cdot\omega^I\cdot \delta_{dR}$. It's pairing with $f_j\cdot \omega^J$ is given by
\be
(f_j\cdot \omega^J, f^i\cdot \omega_I)=(\Delta (f_j)\Delta (\omega^J), f^i\otimes \omega_I).
\ee
In order for this to be non-trivial, the only part of the coproduct of $\omega^J$ that can contribute is $1\otimes \omega^J$. Therefore the pairing is
\be
(f_j, f^i)(\omega^J, \omega_I)=\delta_{j}^i\delta^J_I. 
\ee
Similarly, $ f_j\cdot\omega^J\cdot \delta_{dR}$ pairs non-trivially only with $f^i\cdot\omega_I\cdot \delta^*_{dR}$, and their pairing is
\be
(\Delta (f_j\cdot\omega^J\cdot \delta_{dR}), f^i\cdot \omega_I\otimes \delta^*_{dR}).
\ee
For this to be non-trivial, the only part of the coproduct of $\delta_{dR}$ that contributes is $1\otimes \delta_{dR}$. Similarly, the only term in the coproduct of $\Delta (\omega^J)$ that can contribute is $\rho (\omega^J)^{op}$. The above then becomes
\be
(-1)^{|f^i|+|\omega_I|}(f_j\cdot\omega^J,  f^i\cdot \omega_I)=(-1)^{|f^i|+|\omega_I|} \delta_{j}^i\delta^J_I. 
\ee
This completes the proof.

\end{proof}

We now calculate the commutation relations of the generators using the above basis. 

\begin{Lem}
The element $\delta_{dR}^*$ is central and primitive. The subspace spanned by $f^i\cdot \omega_I$ is identified with the algebra $\Dist (T[1]G)$. 
\end{Lem}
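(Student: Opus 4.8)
The plan is to verify the three assertions separately, treating primitivity first since centrality relies on it. \textbf{Primitivity} is a statement internal to $\CA_G^*$: the coalgebra structure on the dual tensor factor of $D(\CA_G)$ is (co-opposite to) that of $\CA_G^*$, and co-opposition preserves primitivity, so it suffices to show $\langle\delta_{dR}^*, xy\rangle = \langle\delta_{dR}^*,x\rangle\epsilon(y)+\epsilon(x)\langle\delta_{dR}^*,y\rangle$ for all $x,y\in\CA_G$, together with $\epsilon(\delta_{dR}^*)=\langle\delta_{dR}^*,1\rangle=0$. By Lemma~\ref{Lem:CAGbasis} I may take $x,y$ to be monomials $f\omega^I\delta_{dR}^a$ with $a\in\{0,1\}$, and by definition $\langle\delta_{dR}^*, f\omega^I\delta_{dR}^a\rangle=\epsilon(f)\,[\omega^I=1]\,[a=1]$; that is, $\delta_{dR}^*$ detects exactly the part of $\delta_{dR}$-degree one and form-degree zero, evaluating the function factor at $e$. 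Organizing by total $\delta_{dR}$-degree, the only possibly nonzero contributions come from $a+b=1$; there one uses that $\delta_{dR}$ is primitive in $\CA_G$ (Corollary~\ref{Cor:coprodCAG}) and that the commutators $[\delta_{dR},f]$ and $[\delta_{dR},c]$ strictly raise form-degree, so after normal-ordering $xy$ the only term surviving the projection to form-degree zero is $fg\,\delta_{dR}$ with $\omega^I=\omega^J=1$, contributing $\epsilon(fg)=\epsilon(f)\epsilon(g)$, which matches the right-hand side. The cases $a=b=0$ and $a=b=1$ give zero on both sides. This establishes $\Delta(\delta_{dR}^*)=\delta_{dR}^*\otimes1+1\otimes\delta_{dR}^*$.

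For \textbf{centrality} I check that $\delta_{dR}^*$ commutes with a generating set of $D(\CA_G)$: the bulk generators $\CO_G$, $\fg^*[-1]$, $\delta_{dR}$, and the dual generators $f^i,\omega_I$. For $f\in\CO_G$ and $c\in\fg^*[-1]$ the coproducts $\Delta(f)=m_G^*(f)$ and $\Delta(c)=1\otimes c+\rho(c)^{op}$ involve no $\delta_{dR}$, so every Hopf pairing $\langle f^{(i)},\delta_{dR}^*\rangle$ or $\langle c^{(i)},\delta_{dR}^*\rangle$ vanishes; feeding this into the double multiplication \eqref{eq:doubleprod} collapses the cross-relation to ordinary commutation, so $\delta_{dR}^*$ commutes with $\Omega_G$. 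The one relation requiring the explicit double formula is $[\delta_{dR}^*,\delta_{dR}]$: here both elements are primitive, so \eqref{eq:doubleprod} reduces to a sum of terms indexed by the placement of the single $\delta_{dR}$ (resp. $\delta_{dR}^*$) in the iterated coproduct, and the two scalar contributions cancel owing to the antipode sign $S^{-1}\delta_{dR}=-\delta_{dR}$, leaving $\delta_{dR}^*\delta_{dR}=\delta_{dR}\delta_{dR}^*$. Finally, commutation with $f^i,\omega_I$ is internal to $\CA_G^*$, where the product is dual to $\Delta_{\CA_G}$; since $\Delta(\delta_{dR})$ is cocommutative in its $\delta_{dR}$-slot and factors through the function and form parts, one checks $(\delta_{dR}^*\otimes\mathrm{id})\Delta a=(\mathrm{id}\otimes\delta_{dR}^*)\Delta a$ for every monomial $a$, which is precisely the statement that $\delta_{dR}^*$ is central in $\CA_G^*$.

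The identification in the final sentence is then formal. The span of $\{f^i\omega_I\}$ is exactly the set of functionals in $\CA_G^*$ vanishing on all monomials of positive $\delta_{dR}$-degree. This span is a subalgebra: for any $a\in\CA_G$ of $\delta_{dR}$-degree one, every term of $\Delta_{\CA_G}(a)$ carries $\delta_{dR}$ on exactly one tensor factor, since reordering produces only forms and no new copies of $\delta_{dR}$; hence for $\eta_1,\eta_2$ vanishing on positive $\delta_{dR}$-degree one factor of $\langle\eta_1\otimes\eta_2,\Delta a\rangle$ always kills the term, so $\eta_1\eta_2$ again vanishes on positive $\delta_{dR}$-degree. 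The restriction map $\CA_G^*\to\Omega_G^*$ dual to the Hopf inclusion $\Omega_G\hookrightarrow\CA_G$ of Corollary~\ref{Cor:coprodCAG} then carries this subalgebra isomorphically onto $\Omega_G^*=\Dist(G)\otimes\bigwedge\!{}^*\fg[1]=\Dist(T[1]G)$, because $\{f^i\omega_I\}$ is precisely the basis dual to $\{f_i\omega^I\}$ of $\Omega_G$ and both products are dual to $\Delta_{\CA_G}|_{\Omega_G}$; this is consistent with the embedding $D(\Omega_G)=\Dist(T[1]G)\ltimes\Omega_G\hookrightarrow A_{G/G_{ad}}$ of Lemma~\ref{Lem:DAemb}.

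The \textbf{main obstacle} is the centrality computation for $[\delta_{dR}^*,\delta_{dR}]$: it is the only relation where the full double multiplication \eqref{eq:doubleprod} genuinely enters, and tracking the Koszul signs together with the $S^{-1}$-antipode (both $\delta_{dR}$ and $\delta_{dR}^*$ being odd) to obtain the cancellation is the delicate point. Everything else reduces either to vanishing of Hopf pairings or to a finite, essentially sign-free bookkeeping of $\delta_{dR}$-degrees.
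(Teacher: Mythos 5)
Your proof is correct, and for the parts that overlap with the paper it follows essentially the same route: primitivity comes down to the fact that $[\delta_{dR},-]$ strictly raises form degree, so $\delta_{dR}^*$ pairs identically with $f_1f_2\delta_{dR}$ and $f_1\delta_{dR}f_2$; centrality inside $\CA_G^*$ is the observation that $\delta_{dR}$ appears symmetrically in the two slots of $\Delta_{\CA_G}$; and the span of the $f^i\omega_I$ is identified with $\Dist(T[1]G)$ by restricting the pairing to $\Omega_G$. The one substantive difference is scope: the paper reads ``central'' as central in the Hopf algebra $\CA_G^*$ (commutation with $\Dist(T[1]G)$ together with $(\delta_{dR}^*)^2=0$, which you leave implicit but which follows immediately from the pairing since no coproduct has $\delta_{dR}$ in both slots), and it defers centrality in the full double to Proposition \ref{Prop:DAGAG/G}. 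You instead prove centrality in all of $D(\CA_G)$ directly from \eqref{eq:doubleprod}, including the cancellation of the two scalar terms in $[\delta_{dR}^*,\delta_{dR}]$ coming from $S^{-1}\delta_{dR}=-\delta_{dR}$; this is exactly the computation the paper performs later, so your version subsumes both the lemma and part of Proposition \ref{Prop:DAGAG/G}, at the mild cost of invoking the double product before the paper formally introduces $D(\CA_G)$. Your more careful justification that the annihilator of positive $\delta_{dR}$-degree is a subalgebra mapping isomorphically onto $\Omega_G^*$ usefully fills in what the paper dismisses as ``clear.''
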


\begin{proof}
The fact that $f^i\cdot \omega_I$ span $\Dist (T[1]G)$ is clear, since it is identical to the dual calculation of $\Omega_G$. We first show that $\delta_{dR}^*$ is central.  Let $\varphi\in \Dist (T[1]G)$ and $f\in \Omega_G$, then 
    \be
(f\cdot \delta_{dR},   \varphi\cdot \delta_{dR}^*)=(\Delta (f)\Delta (\delta_{dR}), \varphi\otimes \delta_{dR}^*)=(\delta_{dR}, \delta_{dR}^*)(-1)^{|\varphi|}(f, \varphi)=(-1)^{|\varphi|}( f\cdot \delta_{dR},  \delta_{dR}^*\cdot \varphi),
    \ee
    therefore $\delta_{dR}^*$ commutes with all elements in $\Dist (T[1]G)$. Clearly $(\delta_{dR}^*)^2=0$ so $\delta_{dR}^*$ is central.  To show that $\delta_{dR}^*$ is primitive, we simply note that for any $f_1,f_2\in \Omega_G$, the element $\delta_{dR}^*$ pairs non-trivially only with $ f_1f_2\delta_{dR}$ or $f_1\delta_{dR} f_2$, and the two pairings are both equal to $\epsilon (f_1f_2)$, since $(f_1[\delta_{dR}, f_2], \delta^*_{dR})=0$.

\end{proof}

The coproduct on $\Dist (T[1]G)$ can now be easily calculated. Let $\varphi\in \Dist (T[1]G)$, we can write
\be
\Delta_\delta (\varphi)=\varphi_1+\delta^*_{dR}\otimes 1(\varphi_2)+1\otimes \delta^*_{dR}(\varphi_3)+ \delta^*_{dR}\otimes \delta_{dR}^*(\varphi_4).
\ee
It is easy to see that $\varphi_3=\varphi_4=0$ and $\varphi_1=\Delta (\varphi)$. To calculate $\varphi_2$, we have
\be
(f_1\delta_{dR}\otimes f_2, \Delta_\delta (\varphi))=(f_1\delta_{dR}f_2, \varphi)=(f_1 [\delta_{dR}, f_2], \varphi). 
\ee
If $\varphi\in \Dist (G)$ then the above is zero, so $\Delta_\delta=\Delta$ on $\Dist (G)$. Let now $\varphi=b\in \fg[1]$, then we must have $f_i\in \CO_G$ and the above is equal to
\be
f_1(e)\xi^R_b(f_2)(e)=(f_1\delta_{dR}\otimes f_2, \delta_{dR}^*\otimes \mathrm{Lie}_b),
\ee
and therefore $\Delta_\delta (b)=\Delta (b)+ \delta_{dR}^*\otimes \mathrm{Lie}_b$. 

The double $D(\CA_G)$ is the vector space $\CA_G\otimes \CA_G^{*, cop}$, whose coproduct is the coproduct of $\CA_G$ and opposite coproduct of $\CA_G^{*}$. We now compute the commutation relation. 

\begin{Prop}\label{Prop:DAGAG/G}

    The algebra $D(\CA_G)$ is a central extension of $A_{G/G_{ad}}$ by $\delta_{dR}^*$. The central extension is defined by the following commutation relation
    \be
[b, f]=\xi_b^L (f) \delta_{dR}^*. 
    \ee
    
\end{Prop}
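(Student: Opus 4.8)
The plan is to use that $D(\CA_G)$ is automatically a Hopf algebra, being a Drinfeld double (Appendix \ref{app:double}), so no Hopf axioms need to be verified; the work is to identify its underlying vector space, isolate the central element $\delta_{dR}^*$, and match the rest of the structure with $A_{G/G_{ad}}$. As a graded vector space $D(\CA_G)=\CA_G\otimes\CA_G^*$, and by Lemma \ref{Lem:CAGbasis} we have $\CA_G^*=\Dist(G)\otimes\bigwedge\!{}^*\fg[1]\otimes\bigwedge\!{}^*\delta_{dR}^*$. Hence $D(\CA_G)\cong A_{G/G_{ad}}\otimes\bigwedge\!{}^*\delta_{dR}^*$, with $\delta_{dR}^*$ in degree $-1$. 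The previous lemma shows $\delta_{dR}^*$ is central and primitive, so $(\delta_{dR}^*)$ is a Hopf ideal and $D(\CA_G)\twoheadrightarrow D(\CA_G)/(\delta_{dR}^*)$ is a Hopf map onto an algebra with exactly the underlying space of $A_{G/G_{ad}}$; it remains to match products and coproducts (the identification being via $b\mapsto -b$, consistently with Lemma \ref{Lem:DAemb}).

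First I would dispose of the coproducts. The sub-Hopf-algebra $\CA_G\subset D(\CA_G)$ retains its own coproduct, matching the copy of $\CA_G$ inside $A_{G/G_{ad}}$; on $\CA_G^*$ we have $\Delta_\delta=\Delta$ on $\Dist(G)$ and $\Delta_\delta(b)=\Delta(b)+\delta_{dR}^*\otimes\mathrm{Lie}_b$, which reduce modulo $\delta_{dR}^*$ to the symmetric coproducts of Lemma \ref{Lem:AGGcom}. Next I would treat the relations. Those internal to $\CA_G$ and to $\CA_G^*$ are automatic, and all cross relations between $\Omega_G\subset\CA_G$ and $\Dist(T[1]G)\subset\CA_G^*$ coincide with those of $A_{G/G_{ad}}$ by the Hopf embedding of Lemma \ref{Lem:DAemb}. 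This leaves only the cross relations involving $\delta_{dR}$ and the single new relation $[b,f]$ to be computed from the double product formula \eqref{eq:doubleprod}.

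For the relations with $\delta_{dR}$ I would use that $\delta_{dR}$ is primitive and pairs nontrivially only with $\delta_{dR}^*$: straightening $\delta_{dR}\cdot b$ against $\Delta_\delta(b)=\Delta(b)+\delta_{dR}^*\otimes\mathrm{Lie}_b$ forces the $\delta_{dR}$–$\delta_{dR}^*$ pairing onto the $\mathrm{Lie}_b$ leg and gives $[\delta_{dR},b]=\mathrm{Lie}_b$ with no $\delta_{dR}^*$ correction, exactly as in $A_{G/G_{ad}}$, while $[\delta_{dR},\varphi]$ for $\varphi\in\Dist(G)$ reproduces the expected relation. The decisive computation is $[b,f]$ for $b\in\fg[1]$, $f\in\CO_G$. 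Since $\fg[1]$ pairs trivially with $\CO_G$, the symmetric legs $b\otimes 1+1\otimes b$ of $\Delta_\delta(b)$ contribute nothing when straightened against $\Delta(f)=m_G^*(f)$, so the whole commutator comes from the term $\delta_{dR}^*\otimes\mathrm{Lie}_b$: the factor $\mathrm{Lie}_b$ pairs with a coproduct leg of $f$ while $\delta_{dR}^*$ survives as a central coefficient, producing $[b,f]=\xi_b^L(f)\,\delta_{dR}^*$.

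The main obstacle is precisely this last step: applying \eqref{eq:doubleprod} so that the pairing $\langle\mathrm{Lie}_b,-\rangle$ lands on the correct coproduct leg of $f$. Recall $\langle\mathrm{Lie}_b,f\rangle=\xi_b^R(f)(e)$ by \eqref{eq:DistGg}; pairing against the first leg gives $\sum\langle\mathrm{Lie}_b,f_{(1)}\rangle f_{(2)}=\xi_b^L(f)$, the right-invariant field, whereas pairing against the second leg would yield $\xi_b^R(f)$. One must therefore track the antipode ($S$ versus $S^{-1}$) appearing in the straightening formula together with the Koszul signs carried by $\delta_{dR}^*$, $b$ and $\mathrm{Lie}_b$, to confirm that the surviving derivative is $\xi_b^L(f)$ with the stated sign. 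Once this relation is established, setting $\delta_{dR}^*=0$ annihilates exactly this correction, recovering $[b,f]=0$ and hence the identification $D(\CA_G)/(\delta_{dR}^*)\cong A_{G/G_{ad}}$, which exhibits $D(\CA_G)$ as the asserted central extension.
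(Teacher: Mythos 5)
Your proposal is correct and follows essentially the same route as the paper: reduce the relations not involving the modified coproduct to $D(\Omega_G)\subseteq A_{G/G_{ad}}$ via Lemma \ref{Lem:DAemb}, and extract $[\delta_{dR},b]$ and $[b,f]$ from the double product formula \eqref{eq:doubleprod} using $\Delta_\delta(b)=\Delta(b)+\delta_{dR}^*\otimes\mathrm{Lie}_b$, with the final identification via $b\mapsto -b$. The only wording to tighten is the claim that \emph{all} cross relations between $\Omega_G$ and $\Dist(T[1]G)$ follow from Lemma \ref{Lem:DAemb}: for $[b,c]$ one must additionally observe, as the paper does, that the correction term $\delta_{dR}^*\otimes\mathrm{Lie}_b$ pairs trivially with the coproduct of $c\in\fg^*[-1]$.
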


\begin{proof}
    Since the coproduct of $\Dist (G)$ is unchanged, its commutation relation with $\CA_G$ follows from the commtutation relation in $D(\Omega_G)\subseteq A_{G/G_{ad}}$. The fact that $\delta_{dR}^*$ is central follows from the coproduct of $\CA_G$ and that $\delta_{dR}^*$ is primitive. Since $\Delta_\delta (b)$ is equal to $\Delta (b)+ \delta_{dR}^*\otimes \mathrm{Lie}_b$, and the correction term pairs trivially with coproducts of $c\in \fg^*[-1]$, the commutation relation between $b$ and $c$ is simply given by its commutation relation inside $D(\Omega_G)\subseteq A_{G/G_{ad}}$. 
    
    Let us now consider $f\in \CO_G$. Using the product rule of the double \eqref{eq:doubleprod}, we find
    \be
b\cdot f= fb+\delta^* f^{(2)}(f^{(1)}, S\mathrm{Lie}_b)=fb-\delta^*_{dR}\xi_b^L (f). 
    \ee
    Finally we have
    \be
    b\cdot \delta=-\delta\cdot b-\mathrm{Lie}_b(\delta_{dR},\delta^*_{dR}),
    \ee
    implying that $\{\delta, b\}=-\mathrm{Lie}_b$. Sending $b\mapsto -b$ gives the desired identification. 

\end{proof}

\begin{Cor}
The $R$-matrix of $D(\Omega_G)$ gives rise to an $R$-matrix of $A_{G/G_{ad}}$. 
\end{Cor}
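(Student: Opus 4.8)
The plan is to obtain the $R$-matrix on $A_{G/G_{ad}}$ by descent from the double, rather than checking the quasi-triangularity axioms directly. As the (topological) Drinfeld double $\CA_G\otimes\CA_G^{*,cop}$, the algebra $D(\CA_G)$ carries a canonical $R$-matrix $\CR=\sum_\alpha (-1)^{|e_\alpha|}\,e_\alpha\otimes e^\alpha$, where $\{e_\alpha\}$ is a homogeneous basis of $\CA_G$ and $\{e^\alpha\}$ the dual basis of $\CA_G^*$; its quasi-triangularity follows exactly as for $D(\Omega_G)$ from the general construction in Appendix \ref{app:double}. By Proposition \ref{Prop:DAGAG/G} the quotient $\pi\colon D(\CA_G)\to D(\CA_G)/(\delta_{dR}^*)=A_{G/G_{ad}}$ is a surjection of (topological) Hopf algebras. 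The first step is the general principle that such a surjection sends an $R$-matrix to an $R$-matrix: applying $\pi\otimes\pi$ (resp.\ $\pi^{\otimes 3}$) to the identities $\Delta^{op}=\CR\,\Delta\,\CR^{-1}$, $(\Delta\otimes 1)\CR=\CR_{13}\CR_{23}$ and $(1\otimes\Delta)\CR=\CR_{13}\CR_{12}$, and using that $\pi$ intertwines the coproducts and is multiplicative (so that $\pi\otimes\pi$ carries $\CR^{-1}$ to the inverse of $(\pi\otimes\pi)(\CR)$), yields the same identities for $(\pi\otimes\pi)(\CR)$ in $A_{G/G_{ad}}$.

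It then remains to identify the descended element. Using the mutually dual bases of Lemma \ref{Lem:CAGbasis} I would write
\be
\begin{aligned}
\CR=&\sum_{i,I}(-1)^{|f_i\omega^I|}\,f_i\omega^I\otimes f^i\omega_I\\
&+\sum_{i,I}(-1)^{|f_i\omega^I\delta_{dR}|+|f^i|+|\omega_I|}\,f_i\omega^I\delta_{dR}\otimes f^i\omega_I\delta_{dR}^*.
\end{aligned}
\ee
Every term in the second sum carries a factor $\delta_{dR}^*$ in its right tensor slot, hence lies in the Hopf ideal $(\delta_{dR}^*)$ and is annihilated by $\pi\otimes\pi$. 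The surviving first sum is exactly $\CR_\Omega=\sum_j(-1)^{|f_j|}f_j\otimes f^j$ for the basis $\{f_j\}=\{f_i\omega^I\}$ of $\Omega_G$ with dual basis $\{f^j\}=\{f^i\omega_I\}$ of $\Dist(T[1]G)$, i.e.\ the canonical $R$-matrix of $D(\Omega_G)$ pushed into $A_{G/G_{ad}}$ along the Hopf embedding of Lemma \ref{Lem:DAemb}. Combined with the previous step, this shows that $\CR_\Omega$ is an $R$-matrix of the whole of $A_{G/G_{ad}}$, and not merely of the subalgebra $D(\Omega_G)$.

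The routine parts are the sign-tracking in the displayed expansion and the formal verification that each axiom survives $\pi\otimes\pi$. The step needing genuine care is the topological one: $\CR$ lives not in the algebraic tensor product but in the completion $D(\CA_G)\wh\otimes D(\CA_G)$, so both the quasi-triangularity of the double and the compatibility of the quotient by $(\delta_{dR}^*)$ with the completed tensor product must be justified inside the pro-finite framework of Appendix \ref{app:double}. Granting that, the argument is purely formal, the only algebraic input beyond the general principle being that $\delta_{dR}^*$ is central and primitive, hence generates a Hopf ideal, as recorded in Proposition \ref{Prop:DAGAG/G}.
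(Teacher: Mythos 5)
Your proposal is correct and follows essentially the same route as the paper: both decompose the canonical $R$-matrix of the double $D(\CA_G)$ via the dual bases of Lemma \ref{Lem:CAGbasis} as $\CR_\CA=\CR_\Omega\cdot\exp(-\delta_{dR}\otimes\delta_{dR}^*)$ and observe that the quotient by the central primitive element $\delta_{dR}^*$ kills the second factor, leaving $\CR_\Omega$ as an $R$-matrix of $A_{G/G_{ad}}$. Your write-up merely makes explicit the general principle (a surjection of topological Hopf algebras carries an $R$-matrix to an $R$-matrix) that the paper compresses into ``sending $\delta_{dR}^*\to 0$ proves the claim.''
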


\begin{proof}

Let $\CR_\CA$ be the $R$-matrix of $D(\CA_G)$. Using the Hopf pairing, we can write it as
\be
\CR_\CA= \sum (-1)^{|\omega_I|}f_i\omega^I\otimes f^i\omega_I+ \sum (-1)^{|\omega_I|+1}(-1)^{|\omega_I|}f_i\omega^I\delta_{dR}\otimes f^i\omega_I\delta_{dR}^*=\CR_\Omega\cdot \exp (-\delta_{dR}\otimes \delta^*_{dR}). 
\ee
Sending $\delta_{dR}^*\to 0$ proves the claim. 

\end{proof}

\begin{Rem}
Alternatively, one just need to show that $[\Delta (\delta_{dR}), \CR_\Omega]=0$ in $A_{G/G_{ad}}$, which is equivalent to the condition that $(d_{dR}f, \varphi)=-(f, d_{dR}\varphi)$. 

\end{Rem}

\begin{Rem}\label{Rem:twist}

The twist $\theta$ of $D(\Omega_G)$ (whose existence is guaranteed by Proposition \ref{Prop:twist} applied to the graded group $T[1]G$) gives a twist for $A_{G/G_{ad}}$, since it commutes with $\delta_{dR}$.

\end{Rem}

At the moment the $R$-matrix $\CR_\Omega$ is still not explicit as it is given by an infinite sum of tensor products. However, it can be written in terms of the following Yetter-Drinfeld data (as in \cite[Section 8.4.1]{DN24}). Let $M, N$ be two modules of $A_{G/G_{ad}}$. In particular, they are both modules of $\Omega_G$, together with compatible comodule structures $\rho_M, \rho_N$, which are maps
\be
\rho_M: M\to \Omega_G\otimes M, ~\rho_N: N\to \Omega_G\otimes N. 
\ee
The action of $\CR_\Omega$ is the composition
\be\label{eq:RYD}
\btik
M\otimes N \rar{1\otimes \rho_N} & M\otimes \Omega_G\otimes N \rar{\sigma_{12}} & \Omega_G\otimes M\otimes  N\rar{\mathrm{act}_M} & M\otimes N. 
\etik
\ee
Similarly, the twist acts as
\be
\btik
M\rar{\rho_M} & \CO_G\otimes M\rar{S\otimes 1} & \CO_G\otimes M\rar{\mathrm{act}_M} & M.
\etik
\ee

\subsubsection{Hopf subalgebra and character sheaf}\label{subsubsec:CSH}

The Hopf algebra $D(\CA_G)$ (as well as $A_{G/G_{ad}}$ and $D(\Omega_G)$) contains a Hopf subalgebra generated by $\Dist (G)$ and $\CO_G$. From the construction, it is clear that this Hopf subalgebra is identified with the double $D(\CO_G)$ of $\CO_G$. As a double, it comes equipped with an $R$-matrix $\CR_G$, which is the identity map on $\CO_G$. The double is a semi-direct product $\Dist (G)\ltimes \CO_G$, and so a module of $D(\CO_G)$ is nothing but a module of $\CO_G$ that is equivariant with respect to conjugation $G$-action. Namely we have an equivalence
\be
D(\CO_G)\Mod\simeq \QCoh (G/G_{ad}).
\ee
The stack $G/G_{ad}$ is the stack of loop spaces $\mathrm{Maps}(S^1, BG)$ by \cite{ben2010integral}. In this work, the authors showed that the DG category $\QCoh (G/G_{ad})$ is the Drinfeld center (in the appropriate DG context) of $\QCoh (BG)=\mathrm{Rep}(G)$. The element $\CR_G$ provides a computable braiding, which can also be expressed in terms of Yetter-Drinfeld data, as in equation \eqref{eq:RYD}. 

As an element in $D(\CA_G)$, $\CR_G$ is not a braiding any more, but it still satisfies the cocycle condition. The following twist equation follows from the Yang-Baxter equation of $\CR_G$:
\be
\CR_G^{12}\Delta_\delta\otimes 1(\CR_G)=\CR_G^{12}\CR_G^{13}\CR_G^{23} = \CR_G^{23}\CR_G^{13}\CR_G^{12}=\CR_G^{23}1\otimes \Delta_\delta (\CR_G).
\ee
In particular, $\CR_G$ can be used to twist the Hopf algebra $D(\CA_G)$. Similarly, $\CR_{G}^{21, -1}$ is also a twist. Using this element $\CR_G$, we can further decompose $\CR_\CA$ as
\be
\CR_\CA=\CR_G\cdot \exp (-\sum c^i\otimes b_i)\exp (-\sum \delta_{dR}\otimes \delta^*_{dR}). 
\ee
From the decomposition, we see that if we restrict this braiding to the heart of the standard $t$-structure of $\CD_\hbar (G/G_{ad})^{\C^\times}$, namely to $\CM_\hbar (G/G_{ad})^{\C^\times}$, we are left with $\CR_G$, which is precisely the braiding constructed in \cite{BFOcharacter}. 

\subsubsection{The algebra $H_G$}

The subalgebra of $D(\CA_G)$ generated by $\Dist (G), \fg^*[-1]$ and $\delta_{dR}$ is easily seen to be isomorphic to $H_G$ (Example \ref{exp:G}). This is not an embedding of Hopf algebras. In the last section, we show that one can perform a Hopf twist to turn $D(\CA_G)$ into $D(H_G)$. 

\subsubsection{Cliffold algebra and matrix factorizations}

Let us now consider the quotient $A_{G/G_{ad}}$. In this quotient, we can consider the Hopf subalgebra generated by $\CO_G, \fg[1]$ and $\fg^*[-1]$. Let us denote this by $\mathrm{Cl}(G, \fg)$. The reason for this notation is that generators $\fg[1]$ and $\fg^*[-1]$ obey a Clifford algebra relation over $\CO_G$. Indeed, if we consider the vector bundle $G\times (\fg\oplus \fg^*)$ over $G$, and the quadratic function $W=((g^{-1}-1)b, c)$ for $(b, c)\in \fg\oplus \fg^*$, then
\be
\{b_i, c^j\}= \frac{\pd^2 W}{\pd b_i \pd c^j}.
\ee
This algebra appears in the study of topological gauge theories in 3d \cite[Section 9.6.2]{DN24}, and it was argued there that the derived category of $\mathrm{Cl}(G, \fg)\Mod^G$ ($G$-equivariant modules) should be equivalent to the category $\MF_G (G\times T^*\fg, W)$, the category of $G$-equivariant matrix factorizations. From the previous sections, the element $\delta$ gives rise to a deformation of this category compatible with the braided tensor structure. 

\subsection{Double of $H_G$}\label{subsec:doubleHG}

We will use the same method to calculate the double of $H_G$. Let us identify $H_G$ with $\bigwedge\!{}^* \fg^*[-1]\otimes \C[\delta]\otimes \Dist (G)$ via multiplication. Denote by $\omega^\vee\in \bigwedge\!{}^* \fg[1]$ the element in $H_G^*$ such that
\be
(\omega\cdot \varphi, \omega^\vee)=(\omega, \omega^\vee)\epsilon (\varphi), \qquad (\omega \cdot  \delta\cdot \varphi, \omega^\vee)=0. 
\ee
One can similarly define $f\in \CO_G$ and $\delta^*$ as elements in $H_G^*$. The following is true, whose proof is almost identical to that of Lemma \ref{Lem:CAGbasis}.

\begin{Lem}
The following are a set of dual basis
\be
\{\omega^If^i, \omega^I\delta f^i\},\qquad \{\omega_If_i, (-1)^{|\omega_I|}\omega_I\delta^*f_i\}
\ee
in $H_G$ and $H_G^*$.

\end{Lem}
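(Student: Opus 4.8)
The plan is to replay the proof of Lemma~\ref{Lem:CAGbasis} almost verbatim, interchanging the roles of $\CO_G$ and $\Dist(G)$ (and of $\bigwedge{}^*\fg^*[-1]$ and $\bigwedge{}^*\fg[1]$), the one structural difference being that here the twisted part of the coproduct is carried by $\delta$ rather than by $\fg^*[-1]$. First I would fix mutually dual bases $\{f_i\}\subseteq\CO_G$, $\{f^i\}\subseteq\Dist(G)$ and $\{\omega^I\}\subseteq\bigwedge{}^*\fg^*[-1]$, $\{\omega_I\}\subseteq\bigwedge{}^*\fg[1]$, and reduce the statement to evaluating the four pairings
\be
(\omega^J f^j,\ \omega_I f_i),\quad (\omega^J\delta f^j,\ \omega_I f_i),\quad (\omega^J f^j,\ \omega_I\delta^* f_i),\quad (\omega^J\delta f^j,\ \omega_I\delta^* f_i),
\ee
showing that the first and last reproduce $\delta^J_I\delta^j_i$ while the two mixed ones vanish. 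The prefactor $(-1)^{|\omega_I|}$ on $\omega_I\delta^* f_i$ is exactly what normalizes the last pairing; it is simpler than the sign $(-1)^{|f^i|+|\omega_I|}$ of Lemma~\ref{Lem:CAGbasis} because $\CO_G$ and $\Dist(G)$ sit in cohomological degree $0$.

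The computation is organized by the Hopf-pairing identity $(a,\xi\eta)=(\Delta a,\xi\otimes\eta)$ together with its iterate $(a,\xi\eta\zeta)=(\Delta^{(2)}a,\xi\otimes\eta\otimes\zeta)$, which turns each product in $H_G^*$ into the coproduct of the element of $H_G$ it is paired with. The only coproducts needed are: the standard one on $\Dist(G)$; the primitive coproduct $\Delta(c)=c\otimes 1+1\otimes c$ on $c\in\fg^*[-1]$, so that $\Delta(\omega^J)$ is a signed sum of deconcatenations $\omega^{J_1}\otimes\omega^{J_2}$; and $\Delta(\delta)=\delta\otimes 1+1\otimes\delta+\mathbf{r}$ with $\mathbf{r}=\sum_i x_i\otimes c^i$. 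The mechanism is that each dual generator detects exactly one type of tensor factor and annihilates the others: $\omega_I$ pairs nontrivially only with a factor of the form $\omega'\varphi$ (no $\delta$), extracting $(\omega',\omega_I)\epsilon(\varphi)$; $f_i\in\CO_G$ pairs only with a pure $\Dist(G)$ factor via $(\varphi,f_i)$, killing anything with $\fg^*[-1]$ or $\delta$ content; and $\delta^*$ pairs only with a factor containing $\delta$. Matching these constraints against the coproducts forces a single surviving term in each case, after which the counit axiom $\sum\epsilon(f^{j(1)})f^{j(2)}=f^j$ collapses the distribution part to $(f^j,f_i)=\delta^j_i$ and the exterior part to $(\omega^J,\omega_I)=\delta^J_I$.

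I expect the main obstacle to be controlling the term $\mathbf{r}=\sum_i x_i\otimes c^i$ in $\Delta(\delta)$; this is the analogue here of the $\rho(c)^{op}$ term that complicated the $\CA_G$ computation, and one must check it never contributes. In the last pairing the middle slot of $\Delta^{(2)}(\delta)$ must contain $\delta$ in order to meet $\delta^*$, and only $1\otimes\delta\otimes 1$ supplies this, forcing $\omega^{J_2}=\omega^{J_3}=1$ and collapsing everything to $(\omega^J,\omega_I)(\delta,\delta^*)(f^j,f_i)$; every piece arising from $\mathbf{r}$ instead deposits an $x_i\in\Dist(G)$ or a $c^i\in\fg^*[-1]$ into a slot where $\delta^*$ or $f_i$ demands the opposite type, and likewise the $\delta\otimes 1\otimes 1$ and $1\otimes 1\otimes\delta$ terms place $\delta$ where $\omega_I$ or $f_i$ kills it. In the mixed pairing $(\omega^J\delta f^j,\omega_I f_i)$ the $\delta$ has nowhere to go, since neither $\omega_I$ nor $f_i$ absorbs it, so it vanishes. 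The remaining care is the bookkeeping of Koszul signs when transposing $\delta^*$ and $\omega_I$ past the odd generators, which assemble into the stated factor $(-1)^{|\omega_I|}$. Once the four pairings are verified, mutual duality is immediate, and since the two families have the correct cardinality to exhaust $H_G=\bigwedge{}^*\fg^*[-1]\otimes\C[\delta]\otimes\Dist(G)$ and its continuous dual $\CO_G\otimes\bigwedge{}^*\fg[1]\otimes\C[\delta^*]$ — by the same profinite spanning argument as in Lemma~\ref{Lem:CAGbasis} — they are dual bases.
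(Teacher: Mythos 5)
Your proposal is correct and follows essentially the same route as the paper, which simply invokes the proof of Lemma \ref{Lem:CAGbasis} and isolates the one new point you also identify: in $\Delta(\delta)=\delta\otimes 1+1\otimes\delta+\sum x_i\otimes c^i$ the $\Dist(G)$-factor sits in the left slot and the $\fg^*[-1]$-factor in the right, so every term of the iterated coproduct lands an element of the wrong type (killed by the counit) in at least one slot of the mixed pairings. Your worked-out verification of the four pairings and of the sign $(-1)^{|\omega_I|}$ is just a more explicit rendering of that same argument.
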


\begin{proof}
The proof is identical to Lemma \ref{Lem:CAGbasis}, except that now we use the fact that in the coproduct of $\delta$, elements in $\Dist (G)$ appears to the left of $\bigwedge \!{}^*\fg^*[-1]$. This guarantees that $\omega^I\delta f^i$ pairs trivially with $\omega_If_i$. 

\end{proof}

We can now compute the commutation relation of $H_G^*$. 

\begin{Lem}

In $H_G^*$, the element $\delta^*$ is both central and primitive. Moreover, we have
\be
[b, f]=-\xi_b^L(f)\delta^*, \qquad \forall b\in \fg[1], ~f\in \CO_G. 
\ee

\end{Lem}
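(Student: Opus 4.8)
The plan is to read off the algebra structure of $H_G^*$ by dualizing the coproduct of $H_G$ through the Hopf pairing, in direct parallel with the computation of $\CA_G^*$ that precedes Proposition \ref{Prop:DAGAG/G}. The data I would feed in is the coproduct of $H_G$ on generators: $\Dist(G)$ carries its standard coproduct, the elements of $\fg^*[-1]$ are primitive, and by Theorem \ref{Thm:shiftedbialg} together with the Hopf embedding $H_\fg\hookrightarrow H_G$ one has
\be
\Delta(\delta)=\delta\otimes 1+1\otimes\delta+\mathbf r,\qquad \mathbf r=\sum_i\partial_{x_i}\otimes c^i,
\ee
where $\partial_{x_i}\in\fg\subseteq\Dist(G)$ and $\{c^i\}$ is dual to $\{x_i\}$. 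The product on $H_G^*$ is then $(a,\phi\psi)=(\Delta(a),\phi\otimes\psi)$, and throughout I would pin down an element of $H_G^*$ by pairing it against the basis $\{\omega^If^i,\ \omega^I\delta f^i\}$ of $H_G$ supplied by the preceding lemma.

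For the two structural claims about $\delta^*$ I would argue exactly as for $\delta_{dR}^*$ in the $\CA_G$ case. For primitivity, note that $H_G$ is graded by $\delta$-degree (the number of $\delta$ factors), a grading preserved under multiplication since $\delta^2=0$ and none of the defining relations produce a $\delta$ out of $\delta$-free generators. As $\delta^*$ is the functional dual to the degree-one element $\delta$, the only way a pure $\delta$ appears in a product $ab$ is from $(\delta\text{-part of }a)(1\text{-part of }b)$ or $(1\text{-part of }a)(\delta\text{-part of }b)$, giving $(ab,\delta^*)=(a,\delta^*)\epsilon(b)+\epsilon(a)(b,\delta^*)$, which is precisely primitivity of $\delta^*$. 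For centrality, $\delta^*$ is odd so $(\delta^*)^2=0$, and to see it commutes with $f\in\CO_G$ and $b\in\fg[1]$ I would pair $\delta^*f$ against $f\delta^*$ (and $\delta^*b$ against $b\delta^*$) on a general basis element: the only contributions come from the $\delta\otimes 1$ and $1\otimes\delta$ summands of $\Delta(\delta)$, which enter symmetrically because $\delta$ commutes with $\Dist(G)$ and the correction $\mathbf r$ carries no $\delta$, so the two orderings agree.

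The substantive identity is $[b,f]=-\xi_b^L(f)\delta^*$, which I would prove by evaluating $bf-fb$ against the basis of $H_G$. Since $-\xi_b^L(f)\delta^*$ pairs nontrivially only with elements $\varphi\delta$, $\varphi\in\Dist(G)$, where it returns $-(\varphi,\xi_b^L(f))$, it suffices to locate the $\varphi\delta$-component of $bf-fb$. The key input is that $\delta$ commutes with $\Dist(G)$, so $\varphi\delta=\delta\varphi$ and
\be
\Delta(\varphi\delta)=\Delta(\delta)\Delta(\varphi)=\sum\varphi^{(1)}\delta\otimes\varphi^{(2)}+\sum\varphi^{(1)}\otimes\delta\varphi^{(2)}+\sum_i\partial_{x_i}\varphi^{(1)}\otimes c^i\varphi^{(2)},
\ee
which is already normal-ordered. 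Because $b$ pairs only with the bare $\fg^*[-1]$-sector and $f$ only with $\Dist(G)$, the sole surviving term is the $\mathbf r$-contribution in $(\varphi\delta,fb)$, whose first leg $\partial_{x_b}\varphi^{(1)}$ pairs with $f$ as $(\partial_{x_b}\varphi^{(1)},f)$; recognizing $\sum\partial_{x_b}(f^{(1)})f^{(2)}$ as the right-invariant derivative, this equals $(\varphi,\xi_b^L(f))$, while $(\varphi\delta,bf)=0$. A routine check shows that on all $\delta$-free basis elements and on $c^I\delta\varphi$ with $|I|\geq1$ both terms vanish, matching the vanishing of $\xi_b^L(f)\delta^*$ there; collecting the Koszul signs yields $[b,f]=-\xi_b^L(f)\delta^*$.

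I expect the last computation to be the one obstacle. The care is entirely in the normal-ordering: one must use $[\delta,\Dist(G)]=0$ to write $\Delta(\varphi\delta)=\Delta(\delta)\Delta(\varphi)$ so that the generator $\partial_{x_i}$ coming from $\mathbf r$ sits to the \emph{left} of $\varphi^{(1)}$, since it is exactly this placement that turns the naive left-invariant derivative into the right-invariant field $\xi_b^L$ required by the statement. One must also keep track of the Koszul signs attached to the odd generators $b,\delta^*$ precisely enough to fix the overall minus sign, in contrast with the $+$ sign appearing for $\CA_G$ in Proposition \ref{Prop:DAGAG/G}.
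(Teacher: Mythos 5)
Your proposal is correct and follows essentially the same route as the paper: dualize the product through the Hopf pairing against the basis $\{\omega^I f^i,\ \omega^I\delta f^i\}$, get primitivity and centrality of $\delta^*$ from the fact that the $\delta\otimes 1$ and $1\otimes\delta$ terms enter symmetrically while any reordering of $\delta$ past $\fg^*[-1]$ raises wedge degree, and extract $[b,f]=-\xi_b^L(f)\delta^*$ from the $\mathbf r$-term of $\Delta(\delta)$ with $\partial_{x_i}$ placed to the left of $\varphi^{(1)}$, which is exactly the paper's computation $(x_i\varphi,f)(c^i,b)=(\varphi,\xi_b^L(f))$. Two cosmetic caveats: the $\delta$-count is a filtration rather than a grading (the relation $[\delta,c^i]=-\tfrac12 f^i_{jk}c^jc^k$ trades a $\delta$ for wedge degree, which is precisely why your argument still works), and on wedge-degree-one $\delta$-free elements the pairings with $bf$ and $fb$ are equal rather than individually zero.
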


\begin{proof}
Primitivity of $\delta^*$ follows from the fact that commutator with $\delta$ increases the wedge degree of $\omega^I$, and therefore any term involving a commutator with $\delta$ will pair trivially with $\delta^*$. To prove that $\delta^*$ is central, first note that $(\delta^*)^2=0$. For its commutator with $\bigwedge \fg[1]$, we have
\be
(\omega^J\cdot f^j, \delta^*\cdot \omega_I)=0= (\omega^J\cdot f^j,, \omega_I\cdot \delta^*),~(\omega^J\cdot\delta\cdot f^j, \delta^*\cdot \omega_I)=(-1)^{|\omega_I|}\delta_I^J\epsilon (f^j)= (\omega^J\cdot\delta\cdot f^j, \omega_I\cdot \delta^*). 
\ee
A similar result holds for $f\in \CO_G$. For the commutator between $b$ and $f$, we have
\be
(\omega\cdot \varphi, f\cdot b)=(\Delta (\omega)\Delta (\varphi), f\otimes b)=(\varphi, f)(\omega, b)=(\omega\cdot \varphi, b\cdot f),
\ee
whereas
\be
(\omega\cdot \delta\cdot \varphi, f\cdot b)=(\Delta (\omega)\Delta (\delta)\Delta (\varphi), f\otimes b).
\ee
For this to be non-zero, the only contribution from $\Delta (\delta)$ has to be $\sum x_i\otimes c^i$, and therefore $\omega=1$. The result is given by
\be
(x_i\varphi, f)(c^i, b)=(\varphi, \xi_b^L(f)). 
\ee
This implies that $[b, f]=-\xi_b^L(f)\delta^*$ as desired.

\end{proof}

It is clear that the coproduct on $\CO_G\subseteq H_G^*$ is given by the standard coproduct of $\CO_G$. We now compute the coproduct of $\fg[1]$.

\begin{Lem}
Let $b\in \fg[1]$, then 
\be
\Delta (b)=b\otimes 1+ S\otimes 1\rho^\vee(b),
\ee
where $\rho^\vee: \fg\to \CO_G\otimes \fg$ is the coaction of $\CO_G$ on $\fg$. 

\end{Lem}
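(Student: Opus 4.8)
The plan is to compute $\Delta(b)$ by dualizing the multiplication of $H_G$: since the coproduct of $H_G^*$ is dual to the product of $H_G$, one has $(\Delta(b), x\otimes y)=(b, xy)$ for all $x,y\in H_G$, and it suffices to evaluate the right-hand side on the dual basis of the previous lemma and read off the answer in $H_G^*\otimes H_G^*$. The key simplification is that, by the definition of the pairing, $b\in\fg[1]$ pairs nontrivially with $H_G$ only on the subspace $\fg^*[-1]\cdot\Dist(G)$, where $(\omega\cdot\varphi, b)=(\omega, b)\,\epsilon(\varphi)$, and it annihilates every monomial containing $\delta$. Thus I only need to locate the component of $xy$ lying in $\fg^*[-1]\cdot\Dist(G)$; in particular no $\delta^*$ can occur in $\Delta(b)$, consistent with the claimed formula.

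First I would use that $H_G$ is concentrated in non-negative cohomological degree, with degree-one part $\fg^*[-1]\cdot\Dist(G)\oplus\delta\cdot\Dist(G)$, so that only dual-basis pairs with $(\deg x,\deg y)\in\{(1,0),(0,1)\}$ can contribute to $(b, xy)$. Enumerating these: for $x=c\cdot\varphi_1$ with $c\in\fg^*[-1]$ and $y=\varphi_2\in\Dist(G)$ the product is $c\cdot(\varphi_1\varphi_2)$, which pairs with $b$ as $(c, b)\,\epsilon(\varphi_1)\epsilon(\varphi_2)$ and reproduces exactly the term $b\otimes 1$; the option $x=\delta\cdot\varphi_1$, $y=\varphi_2$ yields a $\delta$-monomial and is killed by $b$. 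Symmetrically, for $x=\varphi_1\in\Dist(G)$ and $y=c\cdot\varphi_2$ the product is $\varphi_1\,c\,\varphi_2$ and contributes the cross term, while $x=\varphi_1$, $y=\delta\cdot\varphi_2$ again produces only $\delta$-monomials (as $\delta$ is $G$-invariant, $\varphi_1\,\delta=\delta\,\varphi_1$) and drops out.

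It remains to extract the cross term from $\varphi_1\,c\,\varphi_2$. Commuting $\varphi_1\in\Dist(G)$ past $c\in\fg^*[-1]$ in the semidirect product $\Dist(G)\ltimes\Ce(\fg)$ invokes the coadjoint coaction on $\fg^*[-1]$, giving $\varphi_1\,c=(\varphi_1^{(1)}\rhd c)\,\varphi_1^{(2)}$, so that $(b, xy)=(\varphi_1\rhd c, b)\,\epsilon(\varphi_2)$. Pairing $b$ against the coadjoint action turns it, via the antipode, into the adjoint action on $\fg$, i.e. $(\varphi_1\rhd c, b)=(c, S\varphi_1\rhd b)$; writing the adjoint coaction as $\rho^\vee(b)=\sum_\alpha h_\alpha\otimes b_\alpha$ this is precisely the pairing of $x\otimes y$ with $\sum_\alpha S(h_\alpha)\otimes b_\alpha=(S\otimes 1)\rho^\vee(b)$, using $(S(h_\alpha),\varphi_1)=(h_\alpha, S\varphi_1)$. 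Combining the two contributions gives $\Delta(b)=b\otimes 1+(S\otimes 1)\rho^\vee(b)$. The hard part will be the bookkeeping in this last step: getting the antipode in the correct factor and correctly dualizing the coadjoint coaction on $\fg^*[-1]$ into the adjoint coaction $\rho^\vee$ on $\fg$ — the same manipulation that produces $\xi_b^L=(S\otimes 1)\rho^\vee(\xi_b^R)$ in the proof of Lemma~\ref{Lem:AGGcom} — together with the attendant Koszul signs.
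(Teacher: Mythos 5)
Your proposal is correct and follows essentially the same route as the paper: dualize the multiplication of $H_G$, observe that $b$ pairs only with the component of a product lying in $\fg^*[-1]\cdot\Dist(G)$ (so no $\delta^*$ term appears), and extract the cross term by commuting $\varphi_1$ past $c$ via the coadjoint coaction, with the antipode entering exactly as in the identity $\xi_b^L=(S\otimes 1)\rho^\vee(\xi_b^R)$. The paper's proof is terser (it rules out $\delta^*$ by noting that commutators with $\delta$ raise the wedge degree, and leaves the final dualization implicit), but the content is identical; your explicit degree bookkeeping and use of the cocommutativity of $\Dist(G)$ to write $\varphi\, c=(\varphi^{(1)}\rhd c)\varphi^{(2)}$ are both sound.
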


\begin{proof}

First of all, $\Delta (b)$ does not involve $\delta^*$. This is again because commutator with $\delta$ increases the wedge degree of $\fg^*[-1]$. For any $\varphi_i, \omega_i$ and $i=1, 2$, we have
\be
(\omega_1\varphi_1\otimes \omega_2\varphi_2, \Delta (b))=(\omega_1\varphi_1\omega_2\varphi_2, b)=(\omega_1\cdot \varphi_1\rhd\omega_2, b)\epsilon (\varphi_2). 
\ee
This proves the claim.

\end{proof}

With the calculations of the coproduct, we can now compute the double as an algebra. 

\begin{Prop}\label{Prop:DHDAiso}
The algebra $D(H_G)$ is canonically isomorphic to $D(\CA_G)$, by identifying the generators
\be
\delta\to \delta_{dR}, ~b\mapsto b, ~c\mapsto c, ~\delta^*\mapsto \delta^*,
\ee
as well as $\Dist (G)$ and $\CO_G$.

\end{Prop}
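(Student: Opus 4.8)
The plan is to compare the two doubles as \emph{algebras} via the explicit presentations that are now in hand: the algebra structure of $D(\CA_G)$ is recorded in Corollary \ref{Cor:coprodCAG}, Lemma \ref{Lem:DAemb} and Proposition \ref{Prop:DAGAG/G}, while that of $H_G$ is its definition and that of $H_G^*$ was computed in the preceding lemmas of this subsection. First I would check that both doubles have the same underlying topological vector space: by Lemma \ref{Lem:CAGbasis} and its $H_G$-analogue, multiplication identifies each of $D(\CA_G)$ and $D(H_G)$ with $\CO_G\,\wh\otimes\,\Dist(G)\otimes\bigwedge\!{}^*\fg^*[-1]\otimes\bigwedge\!{}^*\fg[1]\otimes\C[\delta]\otimes\C[\delta^*]$, and the assignment on generators in the statement extends to a linear isomorphism between these spaces. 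It then suffices to show that this linear isomorphism intertwines the two products, i.e.\ that the defining commutation relations are carried to one another.

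Conceptually, the reason this should work is that $D(\CA_G)$ admits two ``polarizations'': the pair $(\CA_G,\CA_G^*)$ and the pair $(H_G,H_G^*)$. Indeed the subalgebra of $D(\CA_G)$ generated by $\Dist(G),\fg^*[-1],\delta_{dR}$ is a copy of $H_G$, and the subalgebra generated by $\CO_G,\fg[1],\delta_{dR}^*$ is a copy of $H_G^*$---the latter being exactly the content of the relations ``$\delta_{dR}^*$ central'' and $[b,f]=-\xi_b^L(f)\delta_{dR}^*$ of Proposition \ref{Prop:DAGAG/G}, which coincide with the relations defining $H_G^*$ computed above. These two subalgebras span $D(\CA_G)$, and the multiplication map $H_G\otimes H_G^*\to D(\CA_G)$ is a linear isomorphism, so the comparison reduces to matching relations. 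The relations internal to either half are then automatic: the $H_G$-relations ($\Dist(G)$ acting on $\fg^*[-1]$ by the coadjoint $G$-action, $[\delta,c^i]=-\frac12 f^i_{jk}c^jc^k$, $\delta^2=0$) and the $H_G^*$-relations match the corresponding subalgebra relations, and the manifestly identical relations---the $\CO_G$--$\Dist(G)$ relation is the double $D(\CO_G)$ on both sides, and $\delta^*\mapsto\delta_{dR}^*$ is central on both sides---need no computation.

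The substance of the proof is to verify the \emph{cross} relations of $D(H_G)$---those pairing a generator of $H_G$ with one of $H_G^*$---against the relations of $D(\CA_G)$, noting that the labels ``internal'' and ``cross'' are interchanged between the two presentations. Using the double product formula \eqref{eq:doubleprod} with the coproducts $\Delta(c)=c\otimes1+1\otimes c$, $\Delta(\delta)=\delta\otimes1+1\otimes\delta+\mathbf r$ (where $\mathbf r=\sum x_i\otimes c^i$ with $x_i\in\Dist(G)$) from Theorem \ref{Thm:shiftedbialg}, and $\Delta(b)=b\otimes1+(S\otimes1)\rho^\vee(b)$ from the lemma above, I would compute the four nontrivial cross brackets in $D(H_G)$. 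For $[\delta,f]$ only the $\mathbf r$-term of $\Delta(\delta)$ contributes, pairing $x_i$ against $f$ and producing $\sum\xi_{x_i}^R(f)c^i$---precisely the de-Rham relation that is \emph{internal} to $\CA_G$ in the other double. For $\{\delta,b\}$ the $\mathbf r$-term again contributes, now pairing $c^i$ against $b$ and leaving $x_i$ to act as $-\mathrm{Lie}_b$, matching Proposition \ref{Prop:DAGAG/G}. For the $\Dist(G)$--$b$ bracket the nonprimitive part $(S\otimes1)\rho^\vee(b)$ reproduces the adjoint action of $\Dist(G)$ on $\fg[1]$, i.e.\ the relation internal to $\CA_G^*=\Dist(T[1]G)$. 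Finally $[b,c]$ is computed as in Lemma \ref{Lem:DAemb}: its primitive part gives $-(c,b)$ and the nonprimitive part of $\Delta(b)$ yields the group term $((-)^{-1}b,c)$, matching the $D(\Omega_G)$-relation. One checks separately that $c$ and $f$ commute and that $\delta$ commutes with $\delta^*$, exhausting the cross relations.

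The main obstacle will be the $\{\delta,b\}$ bracket, the one place where the shifted $r$-matrix $\mathbf r$ feeds into both a coproduct and the double formula at once. Because $\delta,\delta^*,b,c$ are all odd, the Koszul signs---together with the appearance of both $S$ and $S^{-1}$ in \eqref{eq:doubleprod}---must be tracked with care to land on $-\mathrm{Lie}_b$ rather than $+\mathrm{Lie}_b$; this sign is exactly what necessitated the substitution $b\mapsto -b$ already seen in Proposition \ref{Prop:DAGAG/G}, and I would fix the normalization once and for all and then confirm it is simultaneously consistent with the $[b,c]$ and $[b,f]$ brackets. Once the signs are pinned down, the remaining identities are routine applications of \eqref{eq:doubleprod}, and assembling them yields the claimed algebra isomorphism $D(H_G)\xrightarrow{\sim}D(\CA_G)$.
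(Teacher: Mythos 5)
Your proposal is correct and follows essentially the same route as the paper: the paper's proof is exactly the direct verification of the cross commutators $[\delta,f]$, $\{\delta,b\}$ (both via the $\mathbf r$-term of $\Delta(\delta)$), and $[b,c]$ using the double product formula \eqref{eq:doubleprod}, with the $\Dist(G)$-relations and the centrality of $\delta^*$ dismissed as clear. Your ``two polarizations'' framing is a nice organizing remark (and anticipates the $R$-matrix polarization argument the paper uses later for Theorem \ref{Thm:double}), but the computational content is the same.
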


\begin{proof}
This is a purely computational result. Clearly $\delta^*$ is central. We compute the rest of the commutation relations. 

Let us first consider $f\cdot \delta$. We have
\be
f\cdot \delta= (Sf^{(3)}, \delta^{(1)})\delta^{(2)}f^{(2)}(f^{(1)}, \delta^{(3)}).
\ee
Using the coproduct of $\delta$, this has two contributions of the form
\be
\delta \cdot f+(Sf^{(2)}, x_i)c^if^{(1)}=\delta \cdot f-c^i \xi_{x_i}^R(f),
\ee
and we find $[\delta, f]=[\delta_{dR}, f]$. Similarly, the double coproduct of $b$ is of the form
\be
1\otimes \Delta^{op}\Delta^{op}(b)=1\otimes 1\otimes b+ 1\otimes 1\otimes S\rho^\vee(b)^{op}+1\otimes \Delta^{op}(1\otimes S\rho^\vee(b)^{op}).
\ee
The only two terms that can pair non-trivially with each other are $1\otimes \delta\otimes 1$ with $1\otimes 1\otimes S\rho^\vee(b)^{op}$ and $1\otimes x_i\otimes c^i$ with $1\otimes 1\otimes S\rho^\vee(b)^{op}$, since in other terms in the coproduct of $\delta$ elements in $\Dist (G)$ appears to the left of $\bigwedge \!{}^*\fg^*[-1]$. Using this, we find that
\be
b\cdot \delta=-\delta\cdot b-x_i (c^i, b)=-\mathrm{Lie}_b. 
\ee
Finally, one can compute the commutator of $b, c$ which gives
\be
b\cdot c=-c\cdot b-(c, b)+((-)^{-1}b, c),
\ee
as desired. The commutation relations of $\varphi$ with other generators are clear. 

\end{proof}

\subsubsection{Shifted Poisson Lie groups and deformations}\label{subsubsec:ShiftedPoisson}

The Hopf subalgebra $H^*_G\subseteq D(H_G)$ is generated by $\CO_G, \fg[1]$ and $\delta^*_{dR}$ with commutation relations
\be
[b, f]=-\xi_b^L(f)\delta^*_{dR},\qquad b\in \fg[1], f\in \CO_G. 
\ee
Note that if we set $\delta^*_{dR}=0$, then the quotient $H^*_G/(\delta^*_{dR})$ can be identified with functions on $T^*[-1]G$. This is in fact an identification of Hopf algebras, if we identify $T^*[-1]G=G\ltimes \fg^*[-1]$ as the group extension of $G$ by $\fg^*[-1]$. It's Lie algebra is precisely $T^*[-1]\fg$, which was used to construct $H_\fg$. The 1-shifted Lie bialgebra structure on $T^*[-1]\fg$ gives rise to a 1-shfited Poisson structure $\{-, -\}$ on $T^*[-1]G$, making it a 1-shifted Poisson Lie group. This is very similar to the fact that a Lie bialgebra structure on $\fg$ is related to a Poisson Lie group structure on $G$. We see that $H^*_G$ can be viewed as a quantization, in the sense that
\be
[b, f]=-2\xi_b^L (f)\delta^*=-2\{\xi_b^L, f\}\delta^*,
\ee
This is analogous to the fact that the quantization of a Lie bialgebra provides a quantization of the dual Poisson Lie group. 

\subsection{Identifying the doubles}

We finish by proving the second part of Theorem \ref{Thm:double}. Let $\CF=\CR_G^{21, -1}$ be the twist, and denote by $D(\CA_G)^{\CF}$ the twist of $D(\CA_G)$ by $\CF$. 

\begin{Prop}
    The embedding $H_G\to D(\CA_G)^{\CF}$ is a map of Hopf algebras.
\end{Prop}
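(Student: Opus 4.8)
The plan is to exploit that, by Proposition \ref{Prop:DHDAiso}, the canonical inclusion $H_G\hookrightarrow D(H_G)$ becomes, under the algebra isomorphism $D(H_G)\cong D(\CA_G)$, the algebra embedding of $H_G$ onto the subalgebra of $D(\CA_G)$ generated by $\Dist(G)$, $\fg^*[-1]$ and $\delta_{dR}$ (with $\delta\mapsto\delta_{dR}$, $c\mapsto c$, $\varphi\mapsto\varphi$). Since a Hopf twist leaves the underlying algebra untouched, the map $H_G\to D(\CA_G)^{\CF}$ is automatically an algebra embedding, and it manifestly intertwines units and counits. Hence the whole content is to check that it intertwines coproducts, and because it is an algebra map it suffices to verify this on the generators $\varphi\in\Dist(G)$, $c\in\fg^*[-1]$ and $\delta_{dR}$. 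Writing $\Delta$ for the untwisted coproduct of $D(\CA_G)$, the twisted coproduct is $\Delta^{\CF}(x)=\CF\,\Delta(x)\,\CF^{-1}=\CR_G^{21,-1}\,\Delta(x)\,\CR_G^{21}$, and the task is to show that for each generator this equals the coproduct of $H_G$ computed in Theorem \ref{Thm:shiftedbialg} and Corollary \ref{Cor:coprodCAG}.

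First I would dispose of $\Dist(G)$. Its coproduct in $D(\CA_G)$ is the opposite of the standard coproduct of $\Dist(G)$, because $\Dist(G)$ sits inside the dual tensor factor $\CA_G^{*,cop}$, whereas in $H_G$ the coproduct is the standard one. The correction is exactly the one implemented by the $R$-matrix $\CR_G$ of the Hopf subalgebra $D(\CO_G)\subseteq D(\CA_G)$: the quasi-triangular axiom for $D(\CO_G)$ reads $\CR_G\,\Delta_{D(\CO_G)}(\varphi)\,\CR_G^{-1}=\Delta_{D(\CO_G)}^{op}(\varphi)$, and applying the flip to both sides yields $\CR_G^{21,-1}\,\Delta_{D(\CO_G)}(\varphi)\,\CR_G^{21}=\Delta_{D(\CO_G)}^{op}(\varphi)$. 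Since $\Delta_{D(\CO_G)}(\varphi)$ is the opposite of, and $\Delta_{D(\CO_G)}^{op}(\varphi)$ is, the standard $\Dist(G)$-coproduct, the left side is precisely $\Delta^{\CF}(\varphi)$ and the right side is the coproduct of $\varphi$ in $H_G$. Here one only uses that $\CR_G$ lies in $\CO_G\,\wh\otimes\,\Dist(G)$ and that $D(\CO_G)$ is a sub-Hopf-algebra.

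The substantive computations are for $c$ and $\delta_{dR}$, where the conjugation must genuinely change the coproduct. For $c$ one has $\Delta(c)=1\otimes c+\rho(c)^{op}$ by Corollary \ref{Cor:coprodCAG}. Because $c$ commutes with $\CO_G$ and $\CR_G^{21,-1}\CR_G^{21}=1$, the summand $1\otimes c$ is carried to $1\otimes c$; the work is to show that conjugating $\rho(c)^{op}$ produces $c\otimes 1$, i.e. that $\CR_G$ straightens the $\CO_G$-coaction. This is exactly the Yetter–Drinfeld content of $\CR_G$ recorded in \eqref{eq:RYD}: writing $\CR_G=\sum_i f_i\otimes f^i$, the defining duality between the $\Dist(G)$-action on $\fg^*[-1]$ and its $\CO_G$-coaction $\rho$ collapses the sum, giving $\Delta^{\CF}(c)=c\otimes 1+1\otimes c$, the primitive coproduct of $H_G$. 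For $\delta_{dR}$, which is primitive in $D(\CA_G)$, conjugation leaves $\delta_{dR}\otimes 1$ unchanged, since $\delta_{dR}$ commutes with $\Dist(G)$ and hence with the legs of $\CR_G^{21}$, while $1\otimes\delta_{dR}$ acquires a correction coming from $\{\delta_{dR},f_i\}=\sum_k\xi_{x_k}^R(f_i)c^k$ (Example \ref{exp:G}). Using again $\CR_G^{21,-1}\CR_G^{21}=1$ together with the duality that identifies the left-invariant field $\xi_{x_k}^R$ on $\CO_G$ with the primitive $x_k=\partial_{x_k}\in\Dist(G)$, this correction is exactly $\mathbf{r}=\sum_k x_k\otimes c^k$, so that $\Delta^{\CF}(\delta_{dR})=\delta_{dR}\otimes 1+1\otimes\delta_{dR}+\mathbf{r}$, matching the coproduct of $\delta$ in Theorem \ref{Thm:shiftedbialg}.

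The main obstacle is precisely this last identification for $\delta_{dR}$: one must check that conjugating the bare primitive coproduct by $\CF$ reproduces, on the nose, the $r$-matrix term $\sum_k x_k\otimes c^k$, which is the classical shifted-cobracket datum underlying $H_G$. Equivalently, this rests on the compatibility $(d_{dR}f,\varphi)=-(f,d_{dR}\varphi)$ of the de Rham differential with the Hopf pairing, the condition which (as observed earlier for $A_{G/G_{ad}}$) makes $[\Delta(\delta_{dR}),\CR_\Omega]=0$ and controls the single surviving commutator $[\,1\otimes\delta_{dR},\CR_G^{21}\,]$. Once the three generators are verified the map is a bialgebra embedding; since both sides are Hopf algebras it automatically commutes with antipodes, which proves the proposition. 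Together with the parallel computation on the dual generators, this is what upgrades the algebra isomorphism of Proposition \ref{Prop:DHDAiso} to the twist equivalence $D(H_G)\cong D(\CA_G)^{\CF}$ asserted in Theorem \ref{Thm:double}.
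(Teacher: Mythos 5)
Your proposal is correct and follows essentially the same route as the paper: conjugate the coproduct by $\CF=\CR_G^{21,-1}$ and check the three generators, using that $\CR_G^{21}$ commutes with $1\otimes c$ and $\delta_{dR}\otimes 1$, that conjugation by $\CR_G^{21}$ interchanges $c\otimes 1$ with the coaction term $\rho(c)^{op}$ (via the match between the $\Dist(G)$-action and the $\CO_G$-coaction, i.e.\ Proposition \ref{Prop:DAGAG/G}), and that the commutator of $1\otimes\delta_{dR}$ with $\CR_G^{21}$ produces exactly $\sum_i x_i\otimes c^i$. The only cosmetic difference is that for $\Dist(G)$ you invoke the flipped quasi-triangularity identity where the paper simply cites cocommutativity, and you summarize the collapse of the $R$-matrix sums by duality where the paper pairs explicitly with group elements $g\in G$; both are the same computation.
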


\begin{proof}
    We calculate $\CF\Delta_\delta \CF^{-1}$ explicitly and show that it agrees with $H_G$. This is clear for $\Dist (G)$ as it is cocommutative in $D(\CA_G)$ as well as $D (\CO_G)$. Let $c\in \fg^*[-1]$, we have
    \be
\CF\Delta_\delta (c)\CF^{-1}= 1\otimes S(\CR_{G}^{21}) \Delta_\delta (c)\CR_G^{21}. 
    \ee
   Recall that $\Delta_\delta (c)=1\otimes c+\rho (c)^{op}$ from Corollary \ref{Cor:coprodCAG}. Moreover, since $\CR_G^{21}$ commutes with $1\otimes c$, we simply need to show that
   \be
\CR_G^{21} (c\otimes 1)\CR_G^{21, -1}=\rho (c)^{op}. 
   \ee
   Both being elements in $D(\CA_G)\otimes \CO_G$, we pair them with an element $g\in G$. The RHS is $g\rhd c$, the adjoint action of $g$ on $c$. The LHS is equal to $g cg^{-1}$ computed in $D(\CA_G)$, which coincides with $g\rhd c$ thanks to Proposition \ref{Prop:DAGAG/G}. Therefore $\CF\Delta_\delta (c)\CF^{-1}=c\otimes 1+1\otimes c$. Consider now
   \be
   \CF\Delta_\delta (\delta_{dR})\CF^{-1}=1\otimes S(\CR_{G}^{21}) (\delta_{dR}\otimes 1+1\otimes \delta_{dR})\CR_G^{21}.
   \ee
   Again $\CR_G^{21}$ commutes with $\delta_{dR}\otimes 1$, we simply need to show that
   \be
1\otimes S(\CR_{G}^{21})(1\otimes \delta_{dR}) \CR_G^{21}=1\otimes \delta_{dR}+ \sum  x_i\otimes c^i,
   \ee
or equivalently, $[\CR_G^{21, -1},1\otimes \delta_{dR}]\CR_G^{21}= \sum x_i\otimes c^i$. Let us write $\CR_G=\CR^{(1)}\otimes \CR^{(2)}$, then 
\be
[\CR_G^{21, -1},1\otimes \delta_{dR}]\CR_G^{21}= -S(\CR^{(2)})\CR^{(2)'}\otimes \delta_{dR}(\CR^{(1)})\CR^{(1)'}= -S(\CR^{(2)})\CR^{(2)'}\otimes x_i(\CR^{(1)})\CR^{(1)'}c^i. 
\ee
The element $-S(\CR^{(2)})\CR^{(2)'}\otimes x_i(\CR^{(1)})\CR^{(1)'}$ belongs to $D(\CA_G)\otimes \CO_G$, we pair them with $g\in G$, which gives
\be
-S(gx_i)g=x_i g^{-1}g=x_i.
\ee
Therefore 
\be
 -S(\CR^{(2)})\CR^{(2)'}\otimes x_i(\CR^{(1)})\CR^{(1)'}=\sum x_i\otimes c^i,
\ee
as desired. 

\end{proof}

We can now finish the proof of Theorem \ref{Thm:double}. One could perform a similar calculation for the rest of the generators, but we prefer to use the polarization of $R$-matrix to achieve this. 

\begin{proof}[Proof of Theorem \ref{Thm:double}]

We show that $D(H_G^*)\cong D(\CA_G)^{\CF}$ using the polarization of the $R$-matrix. The $R$-matrix of $D(\CA_G)^{\CF}$ is 
\be
\CR_\CF= \CF^{21}\CR_\CA \CF^{-1}= \CR_G^{-1} \CR_\CA \CR_G^{21}. 
\ee
We have seen that $\CR_\CA$ can be written as $\CR_G\exp (-\sum c^i\otimes b_i)\exp (-\delta_{dR}\otimes\delta_{dR}^*)$, therefore
\be
\CR_\CF=\exp (-\sum c^i\otimes b_i)\exp (-\delta_{dR}\otimes\delta_{dR}^*)\CR_G^{21}.
\ee
In particular, $\CR_\CF\in  H_G\wh \otimes D(\CA_G)^{\CF}$. The polarization of the $R$-matrix implies that there exists a map 
\be
D(H_G)=H_G\otimes H_G^{*, cop}\longrightarrow D(\CA_G)^{\CF}.
\ee
Indeed, the map $H_G^{*, cop}\to D(\CA_G)^{\CF}$ is supplied by $\CR_\CF$ by $\CR_\CF (a\otimes 1)$ for $a\in H_G$. The fact that this is a map of Hopf algebras, and the fact that the images of $H_G^{*, cop}$ and $H_G$ obey the commutation relation of the double follow from the cocycle conditions of $\CR_\CF$. Moreover, $\CR_\CF$ can be viewed as an element in $H_G\otimes \overline{H_G^{*, cop}}$, where $ \overline{H_G^{*, cop}}$ is the image of $H_G^{*, cop}$ in $D(\CA_G)^{\CF}$. It is now clear from the calculation of $D(H_G)$ and its pairing in Section \ref{subsec:doubleHG} that the map induecd from $\CR_\CF$ is precisely the identification of Proposition \ref{Prop:DHDAiso}. We have now completed the proof.

\end{proof}

\appendix

\section{Drinfeld double of an infinite-dimensional Hopf algebra}\label{app:double}

In this appendix, we consider infinite-dimensional graded Hopf algebras and their double. This is to provide the necessary rigorous background for the constructions appearing in the main body of the paper. 

\subsection{Dual of an infinite dimensional Hopf algebra}

Let $H$ be a graded Hopf algebra with invertible antidote $S$, and assume that $H$ is concentrated in finitely many cohomological degrees. All the operations we will perform respect this grading. Let $H^*$ be the linear dual of $H$. It admits a pro-finite topology from finite-dimensional filtrations of $H$, and also admits a grading thanks to the fact that $H$ is concentrated in finitely many degrees. All the operations we define will respect this grading. 

Define the following maps $\nabla: H^*\wh\otimes H^*\to H^*$ and $\Delta: H^*\to H^*\wh\otimes H^*$ by
\be
h(a, \nabla (\alpha\otimes \beta))=h(\Delta(a), \alpha\otimes \beta), \qquad h(a\otimes b, \Delta (\alpha))=h(ab, \alpha),\qquad a,b\in H, \alpha,\beta\in H^*. 
\ee
The first map does extend to the completion since $H^*\wh\otimes H^*=(H\otimes H)^*$. 

\begin{Lem}
    The maps $\Delta, \nabla$ makes $H^*$ into a pro-finite Hopf algebra. 
\end{Lem}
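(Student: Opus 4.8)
The plan is to observe that both $\nabla$ and $\Delta$ are simply the transposes of the structure maps of $H$, and then to deduce every Hopf axiom for $H^*$ by dualizing the corresponding axiom for $H$. Concretely, I would write $H=\bigcup_i F_iH$ as the filtered union of its finite-dimensional subspaces; then $H^*=\varprojlim_i (F_iH)^*$ carries the pro-finite topology, and the defining identity $H^*\wh\otimes H^*=(H\otimes H)^*$ holds because $\varprojlim_{i,j}(F_iH\otimes F_jH)^*=(H\otimes H)^*$. Under this identification $\nabla=(\Delta_H)^{\trans}$ and $\Delta=(m_H)^{\trans}$, where $m_H,\Delta_H$ denote the multiplication and comultiplication of $H$. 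Transposition sends any $\C$-linear map of ind-finite-dimensional spaces to a continuous map of pro-finite spaces, so $\nabla$ and $\Delta$ are automatically well-defined and continuous; in particular $\Delta(\alpha)$ lands in $H^*\wh\otimes H^*$ rather than in some larger dual.

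The key structural input I would isolate is that the contravariant functor $V\mapsto V^*$, from graded ind-finite-dimensional vector spaces to graded pro-finite vector spaces, is monoidal, carrying $\otimes$ to $\wh\otimes$ and reversing the direction of arrows, and that it carries the (Koszul-signed) braiding $\sigma$ to the corresponding signed braiding on the duals. Granting this, every commutative diagram encoding an axiom of the Hopf algebra $H$ dualizes to a commutative diagram of continuous maps for $H^*$: coassociativity of $\Delta_H$ becomes associativity of $\nabla$, associativity of $m_H$ becomes coassociativity of $\Delta$, and the unit/counit axioms dualize to the statements that the counit $\epsilon_H\in H^*$ is the unit of $H^*$ and that evaluation at $1_H$ is its counit. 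The bialgebra compatibility (that $\Delta_H$ is an algebra map) dualizes to the assertion that $\Delta$ is an algebra map for $\nabla$; this is the step where $\sigma$ intervenes, and hence where the Koszul signs must be tracked.

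For the antipode I would set $S_{H^*}:=(S_H)^{\trans}$ and verify $\nabla(S_{H^*}\otimes 1)\Delta=\eta\epsilon=\nabla(1\otimes S_{H^*})\Delta$ by dualizing the two antipode axioms of $H$; continuity is again automatic, and invertibility of $S_{H^*}$ follows by dualizing $(S_H)^{-1}$, which exists by hypothesis. Thus $S_{H^*}$ is an invertible antipode and $H^*$ is a pro-finite Hopf algebra.

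The main obstacle is establishing the monoidal compatibility of dualization together with the correct behavior of the braiding: one must check that $(H\otimes H)^*$ genuinely equals the completed tensor product $H^*\wh\otimes H^*$ (using that each $F_iH$ is finite-dimensional and that $\Delta_H(F_iH)$ and $m_H(F_iH\otimes F_jH)$ land in finite-dimensional pieces, so the structure maps are continuous for the filtration), and that the dual of the signed swap is the signed swap on the duals. Once this bookkeeping is in place — in particular the Koszul signs in the compatibility axiom — the remaining verifications are purely formal dualizations of the axioms for $H$.
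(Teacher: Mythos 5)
Your proposal is correct and follows essentially the same route as the paper, which proves the lemma in one line by "dualizing the Hopf structure of $H$" using the identification $(H^*)^{\wh\otimes n}=(H^{\otimes n})^*$; you have simply spelled out the formal dualization (including the antipode and the Koszul-sign bookkeeping) in more detail.
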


\begin{proof}

    The statement of the lemma means that tensor products of $H^*$ are taken using the pro-finite topology of $H^*$. This is a simple consequence of dualizing the Hopf structure of $H$, taking into account of the fact that $(H^*)^{\wh\otimes n}=(H^{\otimes n})^*$. 
    
\end{proof}

A smooth module of $H^*$ is  a vector space $M$ with a map $\rho: H^*\to \End (M)$ such that the following two conditions hold.

\begin{enumerate}

    \item For any $m\in M$ the map $\rho (-)m$ factors through a finite-dimensional quotient of $H^*$.

    \item For any $\alpha, \beta\in H^*$, $\rho(\alpha\cdot \beta)m=\rho(\alpha)\rho (\beta)m$. 
    
\end{enumerate}

Note that these conditions do not imply that there exists a map $H^*\wh\otimes M\to M$, unless $M$ is finite-dimensional. However, they do imply that $M$ is filtered by finite-dimensional submodules, which are obviously smooth. By $H^*\Mod$ we always mean the category of smooth modules. 

\begin{Lem}

   The category $H^*\Mod$ is a monoidal category whose rigid objects are finite-dimensional modules. 
   
\end{Lem}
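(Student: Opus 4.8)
The plan is to obtain the monoidal structure on $H^*\Mod$ by dualizing the coproduct of $H^*$, and then to pin down the rigid objects by combining a duality built from the antipode $S$ with a finiteness argument for the coevaluation morphism.

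First I would construct the tensor product. Given smooth modules $M,N$, I put on the $\C$-vector space $M\otimes N$ the operation $\alpha\cdot(m\otimes n)=\sum(\alpha^{(1)}\cdot m)\otimes(\alpha^{(2)}\cdot n)$, where $\Delta(\alpha)=\sum\alpha^{(1)}\otimes\alpha^{(2)}$ and the Koszul signs are understood. Although $\Delta(\alpha)$ only lives in the completed tensor product $H^*\wh\otimes H^*$, smoothness is exactly the hypothesis that makes this unambiguous: the actions $\rho_M(-)m$ and $\rho_N(-)n$ factor through finite-dimensional quotients $H^*/I_m$ and $H^*/I_n$, so the composite $H^*\xrightarrow{\Delta}H^*\wh\otimes H^*\to (H^*/I_m)\otimes(H^*/I_n)$ is a finite expression whose image is a finite-dimensional quotient of $H^*$ through which $\alpha\mapsto\alpha\cdot(m\otimes n)$ factors. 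This simultaneously verifies that $M\otimes N$ is again smooth and that the formula is a genuine $H^*$-action, since $\Delta$ is an algebra map. The unit object is $\C$ with $H^*$ acting through the counit $\alpha\mapsto\alpha(1_H)$, and the associativity and unit constraints are the identity maps of the underlying vector spaces, valid because $\Delta$ is coassociative and counital (dually, because $m_H$ is associative and unital). Equivalently and more structurally, smooth $H^*$-modules are precisely $H$-comodules (a coaction $M\to M\otimes H$ being finite on each vector is the smoothness condition), and the above is the standard tensor product of comodules over the bialgebra $H$ defined using $m_H$.

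Next, for finite-dimensional $M$ I would produce a two-sided dual. The linear dual $M^*$ becomes a smooth module (automatically, being finite-dimensional) via $(\alpha\cdot\phi)(m)=\pm\,\phi((S\alpha)\cdot m)$, and together with the ordinary evaluation $\mathrm{ev}\colon M^*\otimes M\to\C$ and coevaluation $\mathrm{coev}\colon\C\to M\otimes M^*$, $1\mapsto\sum_i e_i\otimes e^i$, the zigzag identities hold as for plain graded vector spaces; the role of the antipode is exactly to make $\mathrm{ev}$ and $\mathrm{coev}$ into $H^*$-module maps, and invertibility of $S$ supplies the oppositely handed dual, so $M$ is rigid. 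The only genuine bookkeeping here is the Koszul signs in $\mathrm{ev}$, $\mathrm{coev}$ and in the dual action, but no new phenomenon occurs.

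The decisive step is the converse: a rigid object is finite-dimensional, which I would prove directly from the triangle identity, using nothing about $H$ beyond the monoidal structure already in hand. Suppose $M$ admits a left dual $M^\vee$ with $\mathrm{coev}\colon\C\to M\otimes M^\vee$ and $\mathrm{ev}\colon M^\vee\otimes M\to\C$. Since $M\otimes M^\vee$ is an honest tensor product of vector spaces, $\mathrm{coev}(1)=\sum_{i=1}^n m_i\otimes\phi_i$ is a finite sum. The identity $(\mathrm{id}_M\otimes\mathrm{ev})\circ(\mathrm{coev}\otimes\mathrm{id}_M)=\mathrm{id}_M$ then yields, for every $m\in M$,
\[
m=\sum_{i=1}^n m_i\,\mathrm{ev}(\phi_i\otimes m),
\]
so $M$ is spanned by $m_1,\dots,m_n$ and is therefore finite-dimensional. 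I expect this finiteness argument to be the crux: the monoidal structure and the existence of duals for finite-dimensional modules are transports of standard facts about comodules over a Hopf algebra with invertible antipode, whereas this last step is what matches the class of dualizable objects exactly with the finite-dimensional ones.
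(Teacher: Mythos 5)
Your proof is correct and follows essentially the same route as the paper's, which is much terser: the paper reduces to finite-dimensional modules via the filtration of smooth modules by finite-dimensional submodules, checks that $\Delta$ gives a well-defined action there, and dismisses the rigidity claims as clear. You supply exactly the details the paper omits — the factorization of the coaction through finite-dimensional quotients, the antipode-twisted dual for finite-dimensional objects, and the finiteness of $\mathrm{coev}(1)$ forcing any rigid object to be finite-dimensional — all of which are the standard and intended arguments.
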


\begin{proof}
    Let $M, N$ be smooth modules. We claim that $\Delta$ gives a well-defined smooth module structure on $M\otimes N$. Without loss of generality, we assume that $M, N$ are finite-dimensional, and therefore their tensor product is, and hence it is clear that $\Delta$ gives a well-defined action. The rest of the statements are clear. 
    
\end{proof}

The relation between modules and comodules remain true in the setting of infinite dimensional Hopf algebras.

\begin{Prop}
    There is an equivalence of monoidal categories
    \be
H^*\Mod\simeq H\mathrm{-coMod}.
    \ee
\end{Prop}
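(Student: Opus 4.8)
The plan is to write down two functors by hand, show they are mutually inverse, and then observe that the first is strictly monoidal. Throughout I use right comodules and the Sweedler notation $\rho_M(m)=\sum m_{(0)}\otimes m_{(1)}\in M\otimes H$ for the coaction. First I would define $\Phi\colon H\mathrm{-coMod}\to H^*\Mod$ by equipping a comodule $M$ with the action
\be
\alpha\cdot m=\sum m_{(0)}\,\alpha(m_{(1)}),\qquad \alpha\in H^*,\ m\in M.
\ee
The identity $(\alpha\beta)\cdot m=\alpha\cdot(\beta\cdot m)$ is precisely the definition of $\nabla$ as dual to $\Delta_H$ combined with coassociativity of $\rho_M$, and the counit axiom of $\rho_M$ makes the unit of $H^*$ act trivially. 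Smoothness is automatic: since $\rho_M(m)$ is a finite sum $\sum_i m_i\otimes h_i$, the map $\alpha\mapsto\alpha\cdot m$ depends only on the restriction of $\alpha$ to the finite-dimensional subspace spanned by the $h_i$, hence factors through a finite-dimensional quotient of $H^*$, which is condition (1) in the definition of a smooth module.

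For the inverse I would first record two structural facts. By the fundamental theorem of comodules every object of $H\mathrm{-coMod}$ is the filtered union of its finite-dimensional subcomodules, and dually condition (1) says every smooth $H^*$-module is the filtered union of its finite-dimensional submodules; so it suffices to build the inverse on finite-dimensional objects and pass to the colimit. Second, writing $H=\bigcup_\lambda H_\lambda$ as a union of finite-dimensional subspaces, the pro-finite topology presents $H^*=\varprojlim_\lambda H_\lambda^*$ as a linearly compact vector space whose continuous dual is canonically $H$ again. Given a smooth module $M$ and a finite-dimensional submodule $M_j$, the action factors through a finite-dimensional quotient algebra $H^*\twoheadrightarrow Q_j$; dualizing the resulting continuous map $H^*\otimes M_j\to M_j$ and using $\dim M_j<\infty$ together with the continuous duality produces a coaction $\rho_{M_j}\colon M_j\to H\otimes M_j$, explicitly $\rho(e_k)=\sum_l h_{lk}\otimes e_l$ where $\alpha\cdot e_k=\sum_l\alpha(h_{lk})e_l$ for a basis $\{e_k\}$. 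Coassociativity of $\rho$ is dual to associativity of the action and the counit axiom is dual to the unit axiom. These coactions are compatible with the inclusions $M_j\hookrightarrow M_{j'}$, so they glue to a coaction on $M$; this defines $\Psi\colon H^*\Mod\to H\mathrm{-coMod}$.

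That $\Psi\Phi$ and $\Phi\Psi$ are naturally isomorphic to the identity is then a direct unwinding of the two descriptions, reduced to the classical finite-dimensional module-comodule duality on each finite-dimensional piece and extended by compatibility with filtered colimits. For the monoidal statement, recall that the tensor product of right comodules uses the multiplication of $H$, $\rho_{M\otimes N}(m\otimes n)=\sum(m_{(0)}\otimes n_{(0)})\otimes m_{(1)}n_{(1)}$, while the tensor product of $H^*$-modules uses $\Delta_{H^*}$, which by definition is dual to $m_H$. Writing $\Delta_{H^*}(\alpha)=\alpha^{(1)}\otimes\alpha^{(2)}$, a one-line computation
\be
\alpha\cdot(m\otimes n)=\sum(m_{(0)}\otimes n_{(0)})\,\alpha(m_{(1)}n_{(1)})=\sum(\alpha^{(1)}\cdot m)\otimes(\alpha^{(2)}\cdot n)
\ee
shows that $\Phi(M\otimes N)=\Phi(M)\otimes\Phi(N)$ with identity coherence isomorphisms, and $\Phi$ sends the trivial comodule to the counit module, which is the monoidal unit. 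Hence $\Phi$ is a strict monoidal equivalence.

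The hard part will be the well-definedness of $\Psi$, that is, checking that the coaction really lands in the algebraic tensor product $H\otimes M$ rather than in a completion $H\wh\otimes M$. This is exactly where smoothness is indispensable: condition (1) guarantees that the action on each vector is finitary, and the continuous duality $(H^*)^*_{\mathrm{cont}}\cong H$ identifies the dualized action with an honest element of $H\otimes M_j$. Once this finiteness is secured on finite-dimensional submodules, everything else is a routine dualization of the bialgebra axioms and a colimit argument.
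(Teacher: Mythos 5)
Your proof is correct and follows essentially the same route as the paper: both arguments reduce to finite-dimensional objects via the filtration of comodules and smooth modules by finite-dimensional subobjects, and then invoke the finite-dimensional module--comodule duality (the paper phrases this as the adjunction $(V\to H\otimes V)\equiv(H^*\to\End(V))$, using that $\End(V)$ is finite-dimensional, which is the same continuity/finiteness point you isolate at the end). You simply spell out the explicit formulas and the colimit bookkeeping that the paper leaves implicit.
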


\begin{proof}
Objects in LHS and RHS are both filtered by finite-dimensional subobjects \cite[Section 4.c]{milne2017algebraic}, so we just need to prove the equivalence for finite-dimensional objects. Let $V$ be a finite-dimensional vector space. A comodule structure on $V$ is a map
\be
(V\to H\otimes V)\equiv (V\otimes V^*\to H)\equiv (H^*\to \End (V)).
\ee
It is easy to see that the resulting map $H^*\to \End (V)$ gives a module structure, and vice versa. Here we use the fact that $\End (V)$ is finite-dimensional. The statement about the monoidal structure is clear.

\end{proof}

\subsection{Doubling an infinite-dimensional Hopf algebra}\label{subsec:doubleappendix}

We now consider the double of $H$. Let $D(H):=H\otimes H^*$. An element in this is a finite sum of elements of the form $a\otimes \alpha$. It is important that we only allow finite sums. Since we are dealing with graded Hopf algebras, signs in the definition of the double are important. This was carefully treated in \cite{gould1993quantum} and \cite[Section 5.1]{DN24}, as we recall now. Define an algebra structure on $D(H)\otimes D(H)$ by
\be\label{eq:doubleprod}
(a\otimes \alpha)\cdot (b\otimes \beta)=(-1)^{\xi(\alpha, b)} (S^{-1}b^{(1)}, \alpha^{(1)})ab^{(2)}\otimes \alpha^{(2)}\beta(b^{(3)}, \alpha^{(3)})
\ee
for $a,b\in H$ and $\alpha, \beta\in H^*$. Here the sign is given by
\be
\xi(\alpha, a)= (\sum_{i\leq n\leq 3}|\alpha^{(i)}|)|a^{(i)}|.
\ee

\begin{Lem}
    Equation \eqref{eq:doubleprod} provides a well-defined algebra structure on $D(H)$. 
\end{Lem}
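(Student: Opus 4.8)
The plan is to verify in turn that \eqref{eq:doubleprod} is well-defined, unital, and associative, with essentially all of the genuinely new work concentrated in the first point; the algebra axioms themselves will turn out to be formally identical to the finite-dimensional graded case.

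First I would establish well-definedness, which is the only point special to the profinite setting. A priori the factors $\alpha^{(1)},\alpha^{(2)},\alpha^{(3)}$ in \eqref{eq:doubleprod} come from the iterated coproduct $\Delta^{(2)}\alpha\in (H^{\otimes 3})^*$, which need not be a finite sum. The crucial observation is that the two outer legs are always contracted against honest elements of $H$. By the earlier Lemma, $\Delta$ on $H^*$ is dual to the multiplication $\nabla$ on $H$, so that, up to the Koszul sign rule, for any $c\in H$
\be
\sum (S^{-1}b^{(1)}, \alpha^{(1)})\,(c, \alpha^{(2)})\,(b^{(3)}, \alpha^{(3)}) = (S^{-1}b^{(1)}\cdot c\cdot b^{(3)},\, \alpha).
\ee
Hence the assignment $\alpha \mapsto \sum (S^{-1}b^{(1)}, \alpha^{(1)})\,\alpha^{(2)}\,(b^{(3)}, \alpha^{(3)})$ is exactly the transpose of the linear endomorphism $c\mapsto \sum S^{-1}b^{(1)}\,c\,b^{(3)}$ of $H$, the sum running over the finitely many terms of $\Delta^{(2)}b$. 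This exhibits the result as a bona fide element of $H^*$. Since the $H$-leg $ab^{(2)}$ also ranges over the finitely many terms of $\Delta b$, the product of two finite sums in $D(H)=H\otimes H^*$ is again a finite sum in $H\otimes H^*$ and not merely in the completion. Because $H$ is concentrated in finitely many cohomological degrees, $H^*$ is graded and all coproduct components may be taken homogeneous, so the sign $\xi(\alpha,b)$ is well-defined term by term.

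Next I would check that $1_H\otimes\varepsilon_H$ is a two-sided unit, where $\varepsilon_H$ is the counit of $H$ (the unit of the convolution algebra $H^*$). Using $\Delta(\varepsilon_H)=\varepsilon_H\otimes\varepsilon_H$, the identity $(x,\varepsilon_H)=\varepsilon_H(x)$, and the counit axiom, both $(1\otimes\varepsilon_H)\cdot(b\otimes\beta)$ and $(a\otimes\alpha)\cdot(1\otimes\varepsilon_H)$ collapse to $b\otimes\beta$ and $a\otimes\alpha$; this is immediate and simultaneously confirms that $a\mapsto a\otimes\varepsilon_H$ and $\alpha\mapsto 1\otimes\alpha$ embed $H$ and $H^*$ as subalgebras, obtained by setting $b=1$ (resp. $\alpha=\varepsilon_H$) in \eqref{eq:doubleprod}.

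Finally, for associativity I would reduce to the known finite-dimensional graded computation. Once the finiteness principle above is in hand, every expression occurring in $\bigl((a\otimes\alpha)(b\otimes\beta)\bigr)(c\otimes\gamma)$ and in $(a\otimes\alpha)\bigl((b\otimes\beta)(c\otimes\gamma)\bigr)$ is a finite sum obtained by evaluating $\alpha,\beta,\gamma$ on products and coproducts of the fixed elements $a,b,c$, and no completed tensor product survives after the contractions are performed. The resulting identity is therefore purely formal in the Hopf structure of $H$ and its dual, and coincides with the associativity of the graded quantum double established, with the same sign convention $\xi$, in \cite{gould1993quantum} and \cite[Section 5.1]{DN24}. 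I expect the main obstacle to be exactly this interchange of completed tensor products: one must justify that the formal manipulations of $\Delta^{(2)}\alpha$ used in the associativity calculation commute with the contractions against $H$, so that the infinite-dimensional statement follows from the finite-dimensional one rather than demanding a separate convergence argument. The Koszul sign bookkeeping, while tedious, is routine once all degrees are fixed.
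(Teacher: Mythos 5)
Your proof is correct and follows essentially the same route as the paper: well-definedness is reduced to the finiteness of the iterated coproduct $1\otimes\Delta\,\Delta(b)$ in $H^{\otimes 3}$ (your transpose formulation of the contraction $\alpha\mapsto (S^{-1}b^{(1)},\alpha^{(1)})\,\alpha^{(2)}\,(b^{(3)},\alpha^{(3)})$ is a slightly more explicit way of saying the same thing), and the algebra axioms are then deferred to the finite-dimensional graded computation of \cite{gould1993quantum}. The unit check and the remark on interchanging contractions with the completed coproduct are welcome elaborations but do not change the argument.
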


\begin{proof}
    The well-definedness follows from the fact that for any $b\in H$, the triple coproduct $1\otimes \Delta\Delta (b)$ belong to a finite-dimensional subspace of $H^{\otimes 3}$, and therefore the sum
    \be
(-1)^{\xi(\alpha, b)} (S^{-1}b^{(1)}, \alpha^{(1)})b^{(2)}\otimes \alpha^{(2)}(b^{(3)}, \alpha^{(3)})
    \ee
    is finite. The fact that this is a well-defined algebra structure can be checked similar to \cite{gould1993quantum}. 
    
\end{proof}

The coproduct $\Delta:=\Delta_H\otimes \Delta_{H^*}^{op}$ can still be defined, but it has the annoying feature that it maps $H\to H\otimes H$ but $H^*\to H^*\wh\otimes H^*$.  However, this can be overcome by noticing that the image of $\Delta\otimes \Delta^{op}$ can be naturally identified with $D(H^{\otimes 2})$, which has an algebra structure defined again via equation \eqref{eq:doubleprod}. 

\begin{Prop}
    The map $\Delta$ makes $D(H)$ into a quasi-triangular Hopf algebra, whose $R$-matrix belongs to an appropriate algebra closure of $D(H^{\otimes 2})$:
    \be
\CR=\sum (-1)^{|f_i|} f_i\otimes f^i, \qquad \{f_i\}\subseteq H,~\{f^i\}\subseteq H^*, ~(f_i, f^j)=\delta_i^j.
    \ee
    In particular, the category of modules of $D(H)$ that are smooth as $H^*$ modules has the structure of a braided tensor category. 
    
\end{Prop}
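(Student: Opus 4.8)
The plan is to reduce every claim to the finite-dimensional graded Drinfeld double of \cite{gould1993quantum}, using the pro-finite topology to confine each computation to a finite-dimensional sub-quotient. The first point is to pin down in which object $\CR$ lives. The element $\CR=\sum_i (-1)^{|f_i|}f_i\otimes f^i$ is the image of the graded identity under $H\wh\otimes H^*\cong \End(H)$, so it is not a finite sum in $D(H)\otimes D(H)$; rather, as the text notes, $\Delta\otimes\Delta^{op}$ has image in $D(H^{\otimes 2})$ and $\CR$ lies in a completion of this algebra. The operative observation is that whenever $\CR$ is evaluated — either paired against elements of $H$ in the defining identities, or acting on a pair of modules smooth over $H^*$ — the dual factors $f^i$ are forced into a finite-dimensional quotient, so the sum becomes finite and every manipulation is legitimate. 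I would record as a preliminary lemma that on modules $M,N$ smooth over $H^*$ the operator $\CR$ is a finite sum, with inverse $\CR^{-1}=\sum_i (-1)^{|f_i|}(Sf_i)\otimes f^i$, the two being mutually inverse by the antipode axiom of $H$.

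Next I would check that $(D(H),\Delta)$ is a bialgebra. Coassociativity and counitality of $\Delta=\Delta_H\otimes\Delta_{H^*}^{op}$ are inherited from $H$ and from the pro-finite Hopf algebra $H^*$ of the earlier Lemma. The real content is that $\Delta$ is multiplicative for \eqref{eq:doubleprod}; I would verify this by fixing $a,b\in H$ and $\alpha,\beta\in H^*$ and noting that all iterated coproducts occurring in \eqref{eq:doubleprod} land in finite-dimensional subspaces, so the identity to be checked lives in a finite-dimensional sub-bialgebra, where it is exactly Drinfeld's computation with the Koszul signs of \cite{gould1993quantum}. The antipode $S_D(a\otimes\alpha)=(1\otimes S^{-1}\alpha)(Sa\otimes 1)$ is then determined, its axioms reducing in the same manner.

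The three quasi-triangularity axioms come next. The coproduct axioms $(\Delta\otimes 1)\CR=\CR_{13}\CR_{23}$ and $(1\otimes\Delta)\CR=\CR_{13}\CR_{12}$ are dual, respectively, to the facts that multiplication on $H^*$ is dual to $\Delta_H$ and that $\Delta_{H^*}$ is dual to multiplication on $H$; I would prove them by pairing both sides against test elements and using $(f_i,f^j)=\delta_i^j$ with the sign $(-1)^{|f_i|}$. Quasi-cocommutativity $\Delta^{op}(x)=\CR\Delta(x)\CR^{-1}$ for all $x\in D(H)$ unpacks, on generators $x\in H$ and $x\in H^*$, to precisely the crossed relation \eqref{eq:doubleprod}, so it holds by construction of the product; together with invertibility of $\CR$ this gives quasi-triangularity. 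The braided category statement is then formal: for $M,N$ smooth over $H^*$ the braiding $c_{M,N}=\tau\circ(\CR\cdot-)$, with $\tau$ the graded flip, is a well-defined morphism because $\CR$ acts by a finite sum, its naturality is quasi-cocommutativity, and the hexagon axioms are exactly the two coproduct axioms for $\CR$.

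The main obstacle is entirely the infinite-dimensional bookkeeping rather than any new algebra. One must make precise the completion of $D(H^{\otimes 2})$ in which $\CR$ sits and confirm that the identities $(\Delta\otimes 1)\CR=\CR_{13}\CR_{23}$, $\CR^{-1}\CR=1$, and conjugation by $\CR$ are genuine identities there, each reducing to a finite sum after testing against finite-dimensional data or acting on a smooth module. Once smoothness localizes every identity to a finite-dimensional quotient, the remaining content coincides with the graded finite-dimensional double, whose signs are already fixed by $\xi(\alpha,a)$ in \eqref{eq:doubleprod} and by $(-1)^{|f_i|}$ in $\CR$.
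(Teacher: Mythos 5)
Your proposal is correct and follows essentially the same route as the paper: reduce the Hopf-algebra axioms to the finite-dimensional graded double of Gould et al.\ (legitimate because all products in $D(H)$ are finite sums), verify the two cocycle conditions for $\CR$ by the duality between multiplication on $H^*$ and the coproduct on $H$ (the paper does this via the structure constants $c^{ij}_k$, which is the same computation as your ``pairing against test elements,'' with the added explicit check that $\CR^{13}\CR^{23}$ converges in the completion --- the point you correctly flag as the remaining bookkeeping), and interpret $\CR\Delta\CR^{-1}=\Delta^{op}$ on smooth modules where $\CR$ acts by a finite sum. No gap; the only differences are cosmetic (you additionally record the explicit antipode and the formula for $\CR^{-1}$).
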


\begin{proof}

The standard arguments for the quantum double of finite-dimensional Hopf algebras \cite{gould1993quantum} applies here without change, since all calculations are well-defined, except statements about the $R$-matrix. This is because $\CR\in H\wh\otimes H^*$ rather than $D(H^{\otimes 2})$, so we focus on this part. We need to show that $\CR$ satisfy cocycle condition and that $\CR\Delta\CR^{-1}=\Delta^{op}$. The cocycle conditions are
\be\label{eq:cocycleinf}
\Delta\otimes 1\CR=\CR^{13}\CR^{23}\in (H\otimes H)\wh\otimes H^*, \qquad 1\otimes \Delta (\CR)=\CR^{13}\CR^{12}\in H\wh \otimes (H^*\wh\otimes H^*).
\ee
Assuming that $f^if^j=\sum c^{ij}_k f^k$, then 
\be
\CR^{13}\CR^{23}=\sum (-1)^{|f_i|+|f^i||f_j|+|f_j|} f_i\otimes f_j\otimes f^if^j=\sum (-1)^{|f_i|+|f^i||f_j|+|f_j|}c^{ij}_k f_i\otimes f_j\otimes f^k. 
\ee
This is well-defined in $(H\otimes H)\wh\otimes H^*$ since for each $k$ there are only finitely many nonzero $c^{ij}_k$, thanks to the fact that
\be
c^{ij}_k=(f_k, f^if^j)=(\Delta (f_k), f^i\otimes f^j)~~\equiv~~\Delta (f_k)=\sum_{i,j} c^{ij}_k (-1)^{|f_i||f_j|} f_i\otimes f_j. 
\ee
Moreover, the above implies
\be
\Delta\otimes 1(\CR)=\sum (-1)^{|f_i||f_j|+|f_k|} c^{ij}_k f_i\otimes f_j\otimes f^k. 
\ee
The first equality of \ref{eq:cocycleinf} now follows since $|f_k|=|f_i|+|f_j|$. The second equality of \ref{eq:cocycleinf} can be proven similarly. 

Finally, we need to make sense of $\CR\Delta \CR^{-1}$, because in apriori $\CR$ does not belong to $H\otimes H^*$. For this, we simply restrict to a smooth module of $D(H)$. On such a module, $\CR$ is well-defined, and so is $\CR\Delta \CR^{-1}$. It now follows from the standard quantum double calculations  \cite{gould1993quantum} that $\CR\Delta \CR^{-1}=\Delta^{op}$ indeed holds on smooth modules. This completes the proof. 

\end{proof}

We have now defined the quantum double of $H$, which is $D(H)$ with a slight generalization of the quasi-triangular structure. One can obviously also defined the double of $D(H^*)$ to be $H^*\otimes H$ with a similar quasi-triangular structure. Of course $D(H^*)$ and $D(H)$ can be twisted into each other by the $R$-matrix. The $R$-matrix can be alternatively defined by the following map on smooth modules:
\be
\btik
M\otimes N\rar{\rho_N} & M\otimes H\otimes N\rar{\sigma^{12}} &H\otimes  M\otimes N\rar{\mathrm{act}_M} & M\otimes N
\etik
\ee
where $\sigma$ is the flip map. It is now rather straightforward to see that a smooth module of $D(H)$ is simply a module of $H$ together with a compatible comodule structure. The compatibility condition precisely leads to the category of Yetter-Drinfeld modules \cite[Section 7.15]{EGNO}. 

\begin{Prop}

    There is an equivalence of braided tensor categories
    \be
D(H)\Mod\simeq \CY\CD_H^H, 
    \ee
    where $\CY\CD_H^H$ stands for the category of Yetter-Drinfeld modules. 
    
\end{Prop}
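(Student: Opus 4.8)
The plan is to reduce the statement, via the equivalence $H^*\Mod\simeq H\mathrm{-coMod}$ established above, to the classical identification of Yetter--Drinfeld modules recorded in \cite[Section 7.15]{EGNO}, while controlling the pro-finite topology by working throughout with finite-dimensional subobjects. The conceptual content is that the two tensor factors of $D(H)=H\otimes H^*$ provide, respectively, a module and a comodule structure, and the mixed commutation relation of the double is exactly the compatibility condition defining $\CY\CD_H^H$.

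First I would unpack the structure of a smooth $D(H)$-module $M$. Since $H\otimes 1$ and $1\otimes H^*$ are subalgebras of $D(H)$, restriction equips $M$ with an $H$-module structure and a smooth $H^*$-module structure; by the preceding proposition the latter is the same datum as an $H$-comodule structure $\rho_M\colon M\to H\wh\otimes M$, which I write $\rho_M(m)=m_{(-1)}\otimes m_{(0)}$. The only remaining information in the $D(H)$-action is the interaction between these two structures, and this is governed entirely by the mixed terms of the product \eqref{eq:doubleprod}. Translating the commutation relation read off from \eqref{eq:doubleprod} through the module/comodule dictionary, I expect to obtain precisely the graded Yetter--Drinfeld compatibility
\be
\rho_M(h\cdot m)=h^{(1)}\,m_{(-1)}\,S(h^{(3)})\otimes h^{(2)}\cdot m_{(0)},
\ee
with all signs dictated by the Koszul rule already encoded in $\xi(\alpha,h)$. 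Conversely, a compatible module/comodule pair reassembles into a representation of the whole double, so this yields the underlying abelian equivalence $D(H)\Mod\simeq\CY\CD_H^H$.

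To make these manipulations rigorous in the infinite-dimensional setting I would reduce to finite-dimensional modules. As noted earlier, every smooth $H^*$-module, hence every smooth $D(H)$-module, is filtered by finite-dimensional submodules, and the same holds on the Yetter--Drinfeld side. On a finite-dimensional object the comodule map lands in the honest tensor product $H\otimes M$, the triple coproduct appearing in \eqref{eq:doubleprod} is automatically a finite sum, and every verification becomes a standard finite-dimensional computation as in \cite{EGNO}; the equivalence then extends to all smooth modules by passing to filtered colimits.

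Finally, for the braided structure I would compare the $R$-matrix braiding with the Yetter--Drinfeld braiding. The action of $\CR$ on a tensor product of smooth modules is by construction the composite
\be
M\otimes N\xrightarrow{\rho_N} M\otimes H\otimes N\xrightarrow{\sigma^{12}} H\otimes M\otimes N\xrightarrow{\mathrm{act}_M} M\otimes N,
\ee
which is exactly the canonical braiding $c_{M,N}$ on Yetter--Drinfeld modules; since the monoidal structures have already been matched, the two braided categories coincide. The main obstacle is bookkeeping rather than conceptual: one must verify that the mixed relation extracted from \eqref{eq:doubleprod} is \emph{precisely} the Yetter--Drinfeld axiom, including every Koszul sign, and that each ingredient—the comodule map $\rho_M$, the cross relation, and the action of $\CR$—is well-defined on smooth (rather than merely finite-dimensional) modules, which is exactly what the reduction to finite-dimensional subobjects secures.
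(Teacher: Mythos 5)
Your proposal is correct and follows essentially the same route the paper takes (the paper states this proposition without a formal proof, relying on the preceding discussion that a smooth $D(H)$-module is an $H$-module with a compatible $H$-comodule structure via $H^*\Mod\simeq H\mathrm{-coMod}$, that the mixed relation in \eqref{eq:doubleprod} is the Yetter--Drinfeld condition, and that the displayed composite describing the action of $\CR$ is the canonical Yetter--Drinfeld braiding). Your additional care in reducing to finite-dimensional subobjects to justify the manipulations is exactly the mechanism the paper uses elsewhere in the appendix, so the two arguments coincide in substance.
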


\begin{Rem}

Note that we can't define the double to be $H\wh\otimes H^*$, since this might not have an algebra structure. On smooth modules, any element in this vector space still acts, but the problem is that one can't change the order of the multiplication. 

\end{Rem}

\subsection{Example: algebraic groups}\label{app:OG}

We consider the example $H=\CO_G$ for $G$ an affine algebraic group. All the statements here hold true for DG groups with identical proofs (except that one needs to treat the signs  carefully). Let $\Dist (G)=\CO_G^*$ be the dual Hopf algebra. The equivalence
\be
\Dist (G)\Mod\simeq \CO_G\mathrm{-coMod}\simeq \mathrm{Rep}(G)
\ee
implies that $\Dist (G)$ is the correct pro-finite version of the group algebra of $G$. Let $\C \cdot G$ be the group algebra of $G$ and $U(\fg)$ the universal enveloping algebra of $\fg=\mathrm{Lie}(G)$. Define maps
\be\label{eq:DistGgA}
\begin{aligned}
& g\in G\mapsto \delta_g (f)=f(g), ~~\forall f\in \CO_G\\
& X\in \fg\mapsto \pd_X (f)=\xi_X^Rf(e), ~~\forall f\in \CO_G
\end{aligned}
\ee

\begin{Lem}
The maps in equation \eqref{eq:DistGgA} are maps of Hopf algebras. 

\end{Lem}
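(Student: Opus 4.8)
The plan is to treat the two maps separately, since $\C\cdot G$ and $U(\fg)$ are Hopf algebras of rather different flavour (group-like versus primitively generated), and in each case to reduce everything to the defining duality between $\Dist(G)$ and $\CO_G$. Recall that the product $\nabla$ on $\Dist(G)$ is dual to the coproduct $\Delta_{\CO_G}=m_G^*$ of $\CO_G$, while the coproduct $\Delta$ on $\Dist(G)$ is dual to the pointwise product of $\CO_G$; similarly the unit, counit and antipode of $\Dist(G)$ are dual to the counit, unit and antipode of $\CO_G$. Every verification below is then a one-line transposition of an identity in $\CO_G$.

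First I would handle $g\mapsto\delta_g$. That $\delta_g$ is group-like is immediate from multiplicativity of evaluation: for $f,h\in\CO_G$ one has $(f\otimes h,\Delta\delta_g)=(fh,\delta_g)=(fh)(g)=f(g)h(g)=(f\otimes h,\delta_g\otimes\delta_g)$, so $\Delta\delta_g=\delta_g\otimes\delta_g$, and $\epsilon(\delta_g)=(1,\delta_g)=1$. Multiplicativity of the map is dual to the coproduct of $\CO_G$: $(f,\nabla(\delta_{g_1}\otimes\delta_{g_2}))=(\Delta_{\CO_G}f,\delta_{g_1}\otimes\delta_{g_2})=f(g_1g_2)=(f,\delta_{g_1g_2})$, giving $\delta_{g_1}\delta_{g_2}=\delta_{g_1g_2}$, while $e\in G$ maps to $\delta_e=\epsilon_{\CO_G}$, the unit of $\Dist(G)$. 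Finally compatibility with the antipode reads $(f,S\delta_g)=(Sf,\delta_g)=f(g^{-1})=(f,\delta_{g^{-1}})$, matching $S(g)=g^{-1}$ in the group algebra. Thus $\C\cdot G\to\Dist(G)$ is a map of Hopf algebras.

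For $X\mapsto\partial_X$ the key reduction is that $U(\fg)$ is the primitively generated enveloping Hopf algebra on $\fg$: a bialgebra map out of $U(\fg)$ is the same as a Lie algebra map from $\fg$ into the primitives of the target. So it suffices to check that (i) each $\partial_X$ is primitive and (ii) $X\mapsto\partial_X$ is a homomorphism of Lie algebras for the commutator bracket of $\Dist(G)$. Primitivity is exactly the Leibniz rule for the left-invariant vector field $\xi_X^R$: one computes $(f\otimes h,\Delta\partial_X)=(fh,\partial_X)=\xi_X^R(fh)(e)=\xi_X^R(f)(e)\,h(e)+f(e)\,\xi_X^R(h)(e)$, which equals $(f\otimes h,\partial_X\otimes\delta_e+\delta_e\otimes\partial_X)$, so $\Delta\partial_X=\partial_X\otimes 1+1\otimes\partial_X$, and $\epsilon(\partial_X)=\xi_X^R(1)(e)=0$.

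The hard part is (ii): one must identify the convolution product $\nabla(\partial_X\otimes\partial_Y)$ of the two point-distributions supported at $e$ with the composition of left-invariant differential operators. Unwinding the definitions, $(f,\nabla(\partial_X\otimes\partial_Y))=(\Delta_{\CO_G}f,\partial_X\otimes\partial_Y)=\tfrac{d}{dt}\big|_{0}\tfrac{d}{ds}\big|_{0}\,f(e^{tX}e^{sY})=\xi_X^R\xi_Y^R f(e)$, where the last equality uses left-invariance of $\xi_Y^R$ to differentiate first in $s$ and then in $t$. Antisymmetrizing and invoking the classical bracket identity $[\xi_X^R,\xi_Y^R]=\xi_{[X,Y]}^R$ for left-invariant vector fields yields $\nabla(\partial_X\otimes\partial_Y)-\nabla(\partial_Y\otimes\partial_X)=\partial_{[X,Y]}$, which is exactly (ii); equivalently this is the classical identification of $U(\fg)$ with the algebra of distributions supported at the identity (as in the references cited in the paper). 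This is the only step requiring more than a formal dualization of the Hopf structure of $\CO_G$, and is therefore the main obstacle. With (i) and (ii) in hand, the universal property of $U(\fg)$ produces the algebra map $U(\fg)\to\Dist(G)$, which is automatically a coalgebra map since it is determined on its primitive generators, and hence a map of Hopf algebras.
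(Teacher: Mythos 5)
Your proposal is correct and, for the group-algebra half, is exactly the paper's argument: dualize each Hopf-algebra identity of $\CO_G$ and evaluate against functions. The paper dismisses the $U(\fg)$ half with ``the statement is similar,'' whereas you actually carry it out --- primitivity of $\partial_X$ via the Leibniz rule, identification of the convolution product with composition of left-invariant vector fields, and the universal property of $U(\fg)$ --- which is a clean and complete way to fill in the omitted case.
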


\begin{proof}

Let us consider $\C\cdot G$. The statement for $U(\fg)$ is similar. We have
\be
\delta_{g}\cdot \delta_h (f)=\delta_g\otimes \delta_h (\Delta (f))=f(gh)=\delta_{gh}(f),
\ee
and
\be
(\Delta (\delta_g), f_1\otimes f_2)=(\delta_g, f_1\cdot f_2)=f_1(g)f_2(g)=(\delta_g\otimes \delta_g, f_1\otimes f_2),
\ee
as desired. 

\end{proof}

Note that the map $\C\cdot G\to \Dist (G)$ is dense, since for any $f\in \CO_G$, if $f(g)=0$ for all $g$ then $f=0$. In particular, the restriction functor
\be
\Dist (G)\Mod\longrightarrow \C\cdot G\Mod
\ee
is fully-faithful. The image is precisely $\Rep (G)$, the category of algebraic representations of $G$. 

Let us consider $D(\CO_G)=\CO_G\otimes \Dist (G)$. This now has the structure of a quasi-triangular Hopf algebra in the sense of Appendix \ref{subsec:doubleappendix}. 

\begin{Prop}

Conjugation action induces an action of $\Dist (G)$ on $\CO_G$ and $D(\CO_G)=\Dist (G)\ltimes \CO_G$. 

\end{Prop}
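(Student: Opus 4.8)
The plan is to read off the algebra structure of $D(\CO_G) = \CO_G \otimes \Dist(G)$ directly from the double product \eqref{eq:doubleprod} specialized to $H = \CO_G$. Since $\CO_G$ is concentrated in cohomological degree $0$, every sign $(-1)^{\xi(\alpha,b)}$ is trivial and $S^{-1} = S$ (a commutative Hopf algebra has involutive antipode), which strips away the bookkeeping that makes \eqref{eq:doubleprod} unwieldy. First I would record that the two evident inclusions are algebra maps: setting $\alpha = \beta = 1$ collapses all pairings to counits and gives $(a \otimes 1)(b \otimes 1) = ab \otimes 1$, so $\CO_G \otimes 1$ is a subalgebra isomorphic to $\CO_G$; setting $a = b = 1$ gives $(1 \otimes \alpha)(1 \otimes \beta) = 1 \otimes \alpha\beta$, so $1 \otimes \Dist(G)$ is a subalgebra isomorphic to $\Dist(G)$. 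Thus $D(\CO_G)$ is generated by these two subalgebras, and it remains only to compute the single cross relation $(1 \otimes \alpha)(f \otimes 1)$.

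For this cross relation I would exploit the density of $\C \cdot G \hookrightarrow \Dist(G)$ established above: since both sides are continuous for the pro-finite topology, it suffices to evaluate on a group-like element $\alpha = \delta_g$, where $\Delta \delta_g = \delta_g \otimes \delta_g$. Plugging $b = f$, $\beta = 1$, $\alpha = \delta_g$ into \eqref{eq:doubleprod} gives
\[
(1 \otimes \delta_g)(f \otimes 1) = (S f^{(1)}, \delta_g)(f^{(3)}, \delta_g)\, f^{(2)} \otimes \delta_g,
\]
and since $(Sf^{(1)}, \delta_g) = f^{(1)}(g^{-1})$ and $(f^{(3)}, \delta_g) = f^{(3)}(g)$, the $\CO_G$-factor is the function $x \mapsto f^{(1)}(g^{-1}) f^{(2)}(x) f^{(3)}(g) = f(g^{-1} x g)$. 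Writing $g \rhd f$ for the conjugation action $x \mapsto f(g^{-1}xg)$, this reads $(1 \otimes \delta_g)(f \otimes 1) = ((g \rhd f) \otimes 1)(1 \otimes \delta_g)$, exactly the straightening relation of a semidirect product. Extending by density (equivalently, running the same computation for general $\alpha$ and using that $\Dist(G)$ is cocommutative to rewrite the triple-coproduct expression $(Sf^{(1)}, \alpha^{(1)})(f^{(3)}, \alpha^{(3)}) f^{(2)} \otimes \alpha^{(2)}$ as $\sum (\alpha^{(1)} \rhd f) \otimes \alpha^{(2)}$, where $\alpha \rhd f$ is the conjugation coaction paired with $\alpha$) identifies the cross relation with the conjugation action and yields $D(\CO_G) \simeq \Dist(G) \ltimes \CO_G$.

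The only genuinely delicate points are topological rather than algebraic. One must check that the cross relation is \emph{smooth}, i.e. descends to finite-dimensional quotients, so that $\Dist(G) \ltimes \CO_G$ is a well-defined topological algebra and the density reduction is legitimate; this holds because for fixed $f$ the iterated coproduct $1 \otimes \Delta\Delta(f)$ lands in a finite-dimensional subspace of $\CO_G^{\otimes 3}$, precisely as in the well-definedness of \eqref{eq:doubleprod}. One also verifies that $g \rhd (-)$ is an algebra automorphism of $\CO_G$, which is immediate since conjugation by $g$ is a group automorphism and is what makes the semidirect structure consistent. I expect the main obstacle to be purely organizational: keeping the pairing and antipode conventions of \eqref{eq:doubleprod} aligned so the action emerges as $f \mapsto f(g^{-1}(-)g)$ rather than its inverse, and confirming that the resulting $G$-action is the adjoint one appearing in the identification $D(\CO_G)\Mod \simeq \QCoh(G/G_{ad})$.
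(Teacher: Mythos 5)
Your proposal is correct and follows essentially the same route as the paper: both evaluate the cross relation of \eqref{eq:doubleprod} on the dense group-like elements $\delta_g$, read off the conjugation action $f\mapsto f(g^{-1}(-)g)$, and extend to general $\varphi\in\Dist(G)$ via the formula $\varphi\rhd f= f^{(1)}(S\varphi^{(1)}) f^{(2)}f^{(3)}(\varphi^{(2)})$, whose well-definedness rests on the pro-finiteness of the pairing. Your additional remarks (triviality of signs in degree $0$, cocommutativity of $\Dist(G)$ being what lets one rewrite the triple-coproduct expression in semidirect-product form) are correct and only make explicit what the paper leaves implicit.
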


\begin{proof}

The simplest way to show this is to use the fact that $\C\cdot G\to \Dist (G)$ is dense. Using equation \eqref{eq:doubleprod}, we find the following relation in $D(\CO_G)$. 
\be
g\cdot f= f^{(1)}(g^{-1})f^{(2)}\cdot g f^{(3)}(g)=\mathrm{Ad}_g (f) g. 
\ee
Generally, let $\varphi\in \Dist (G)$, one can define
\be
\varphi\rhd f= f^{(1)}(S\varphi^{(1)}) f^{(2)}f^{(3)}(\varphi^{(2)}).
\ee
This is well-defined thanks to the pro-finiteness of the pairing, and is easily checked to define an action of $\Dist (G)$ on $\CO_G$. As we see, this is precisely the conjugation action.

\end{proof}

This is the statement that we have seen in Section \ref{subsubsec:CSH}, that a module of $D(\CO_G)$ is a quasi-coherent sheaf on $G/G_{ad}$. The braiding here is identical to the one defined by \cite{boyarchenko2014character} in the setting of quasi-coherent sheaves. 

\begin{Exp}
Assume that $G$ is reductive, then
\be
\CO_G=\bigoplus_{V\in G-\mathrm{Irred}} V\otimes V^*,\qquad \Dist (G)=\prod_{V\in G-\mathrm{Irred}} \End (V).
\ee
The action of $\Dist (G)$ on $\CO_G$ here is the obvious one. Multiplication on $\Dist (G)$ is the multiplication on $\End (V)$. The comultiplication maps $\varphi\in \End (V)$ to $\End (V_1\otimes V_2)$ for any $V\subseteq V_1\otimes V_2$. The antipode $S$ maps $\varphi$ to its conjugate, viewed as an element in $\End (V^*)$. For general $G$, one replaces the direct sum by an injective limit and the product by a projective limit.

\end{Exp}

The category of finite-dimensional modules $D(\CO_G)\Mod_f$ of $D(\CO_G)$ is a rigid braided monoidal category. Objects in this category are coherent sheaves supported at finite orbits of $G$. We show that the category of finite-dimensional modules admit a pivotal structure, via an explicit ribbon element. 

\begin{Prop}\label{Prop:twist}
The category $D(\CO_G)\Mod$ admits a twist given by
\be
\theta= \nabla 1\otimes S(\CR), \qquad \Delta (\theta)=(\CR^{21}\CR)^{-1}\theta\otimes \theta.
\ee
Consequently, the category of finite-dimensional modules admit a pivotal structure, by a work of Deligne (see \cite{yetter1993framed}). 

\end{Prop}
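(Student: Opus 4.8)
The plan is to read $\theta=\nabla(1\otimes S)(\CR)$ not as a literal element of $D(\CO_G)$ — which it is not, since $\CR$ lives only in the completion $\CO_G\wh\otimes\Dist(G)$ — but as the explicit natural endomorphism of the identity functor on smooth modules displayed just after \eqref{eq:RYD}. Concretely, for a smooth module $M$ with comodule structure $\rho_M(m)=\sum m_{(-1)}\otimes m_{(0)}$, $m_{(-1)}\in\CO_G$, I would set
\be
\theta_M(m)=\sum \big(S(m_{(-1)})\big)\cdot m_{(0)},
\ee
the dot being the $\CO_G$-module structure of $M$. Since $\rho_M(m)$ is a finite sum for each $m$ (smoothness), this is well-defined, and the pro-finite subtleties obstructing a naive reading of the infinite sum $\nabla(1\otimes S)(\CR)$ never arise; this is the same device used in Appendix \ref{subsec:doubleappendix} to make sense of $\CR\Delta\CR^{-1}=\Delta^{op}$. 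First I would check invertibility of $\theta_M$, with inverse given by the analogous formula with $S$ replaced by $S^{-1}$, and naturality of $\theta$ in $M$, i.e.\ centrality.

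The conceptual reason the axioms hold is that $\theta$ is (the antipode of) the Drinfeld element $u$ of the quasi-triangular Hopf algebra $D(\CO_G)$. For any quasi-triangular Hopf algebra, Drinfeld's computation — valid verbatim on smooth modules, as in \cite{gould1993quantum} — gives $\Delta(u)=(\CR^{21}\CR)^{-1}(u\otimes u)$ and $S^2(a)=uau^{-1}$, which already yields the stated coproduct $\Delta(\theta)=(\CR^{21}\CR)^{-1}\theta\otimes\theta$. The extra input special to our situation is that both $\CO_G$ (commutative) and $\Dist(G)$ (cocommutative) have involutive antipode, so that $S^2=\mathrm{id}$ on the double $D(\CO_G)$; I would verify this directly from the antipode of the double determined by \eqref{eq:doubleprod}. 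Involutivity forces $u$ to be central — so that $\theta$ is genuinely a natural transformation of the identity — and forces $S(\theta)=\theta$, making $\theta$ an honest ribbon (balancing) element rather than merely a grouplike-corrected relative of $u$. As a robust fallback, both the balancing identity and $S$-invariance can instead be checked directly on modules: one computes $\theta_{M\otimes N}$ from the comodule structure $\rho_{M\otimes N}(m\otimes n)=\sum m_{(-1)}n_{(-1)}\otimes(m_{(0)}\otimes n_{(0)})$ and compares it with $(\CR^{21}\CR)^{-1}(\theta_M\otimes\theta_N)$ expressed through the double-braiding, the two agreeing by the Yetter-Drinfeld compatibility and the commutativity of $\CO_G$.

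With $\theta$ in hand as a central, invertible, $S$-invariant element satisfying the balancing axiom, $D(\CO_G)\Mod$ becomes a ribbon braided tensor category. Restricting to finite-dimensional modules, which are rigid (as in Appendix \ref{app:double}), the compatibility of the twist with the rigid duality produces, by the theorem of Deligne recorded in \cite{yetter1993framed}, a pivotal structure, given explicitly by combining $\theta$ with the braiding and the evaluation/coevaluation of the duals. This is the asserted conclusion.

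The main obstacle I anticipate is not any single identity but the bookkeeping forced by the topological and graded setting: $\theta$ is an infinite sum, so every manipulation of the Drinfeld element must be phrased as an operator on a fixed finite-dimensional module, and the Koszul signs in \eqref{eq:doubleprod} must be tracked carefully. The linchpin is establishing $S^2=\mathrm{id}$ on $D(\CO_G)$; once the centrality and $S$-invariance of $u$ follow from it, the remaining structure is produced by the general quantum-double formalism.
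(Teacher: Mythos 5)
Your fallback route --- defining $\theta_M(m)=\sum S(m_{(-1)})\cdot m_{(0)}$ directly on smooth modules and verifying centrality and the balancing identity through the comodule structure --- is sound and is essentially what the paper does: it regards $\theta$ and the relevant products as elements of $\CO_G\wh\otimes \Dist(G)$, i.e.\ as $\Dist(G)$-valued functions on $G$, and checks both identities by pairing against group elements $g$ (resp.\ $g\otimes h$), using that $\C\cdot G$ is dense in $\Dist(G)$; one finds that $\theta$ is the function $g\mapsto g^{-1}$ and that both sides of $\Delta(\theta)=(\CR^{21}\CR)^{-1}\theta\otimes\theta$ send $g\otimes h$ to $h^{-1}g^{-1}\otimes h^{-1}g^{-1}$.

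Your primary route, however, has a genuine gap. It rests on the Drinfeld element $u=\nabla(S\otimes 1)(\CR^{21})$ and the standard identities $S^2=u(-)u^{-1}$, $\Delta(u)=(\CR^{21}\CR)^{-1}(u\otimes u)$. But in this pro-finite setting $u$ is \emph{not} a well-defined operator on smooth modules (the paper states this explicitly just before the centrality computation): writing $u=\sum_i S(f^i)f_i$ puts the $\Dist(G)$-factors on the left, and reordering the infinite sum into the normal form $\CO_G\otimes\Dist(G)$ does not converge; acting on $m$, the inner sum $\sum_i f_i\cdot m$ already produces infinitely many distinct elements before $S(f^i)$ can be applied. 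So Drinfeld's computation is not available ``verbatim on smooth modules,'' and centrality and the coproduct formula for $\theta$ must be checked by hand, as in your fallback. Relatedly, your assertion that $S(\theta)=\theta$ contradicts the remark immediately following the proposition in the paper, which states that $S\theta=\theta$ does \emph{not} hold and that $S\theta$ need not even be defined on infinite-dimensional modules; note this proposition is also invoked for the graded group $T[1]G$, where the Koszul signs in \eqref{eq:doubleprod} matter. Fortunately $S(\theta)=\theta$ is not needed for the statement: the proposition only claims a twist together with the resulting pivotal structure on finite-dimensional modules, and the paper handles the duality compatibility by modifying the rigidity structure rather than by proving $S$-invariance of $\theta$.
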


\begin{proof}

Although $\CR$ lives in a completion of $H\otimes H^*$, it is easy to see that $\theta$ is well-defined when acting on any smooth module of $D(\CO_G)$. On the other hand, the natural   element $u=\nabla S\otimes 1(\CR^{21})$ is not. Let us first show that $\theta$, when acting on smooth modules, indeed commutes with the action of $D(\CO_G)$. Let first $\varphi\in \Dist (G)$, we have
\be
 \varphi\theta=\varphi \CR^{(1)}S\CR^{(2)}=\varphi^{(1)}\rhd\CR^{(1)}\varphi^{(2)}S\CR^{(2)}.
\ee
This can be veiwed as an element in $\CO_G\wh\otimes \Dist (G)$ (functions on $G$ valued in $\Dist (G)$), we pair it with an element $g\in G$, which gives
\be
\CR^{(1)}(S\varphi^{(11)}g\varphi^{(12)})\varphi^{(2)} S\CR^{(2)}=\varphi^{(2)} S\left(S\varphi^{(11)}g\varphi^{(12)} \right)=\varphi^{(3)}S\varphi^{(2)}g^{-1}\varphi^{(1)}=g^{-1}\varphi. 
\ee
It is clear that $\theta\varphi$ is an element in $\CO_G\wh\otimes \Dist (G)$ with the same pairing with $g$. A proof for $f\in \CO_G$ is identical. 

We now need to verify the identity 
\be
\Delta (\theta)=(\CR^{21}\CR)^{-1}\theta\otimes \theta. 
\ee
Note that $\theta$, as an element in $\CO_G\wh\otimes \Dist (G)$, maps $g\to g^{-1}$. The element $\Delta (\theta)$ can be viewed as an element in $\CO_G^{\otimes 2}\wh\otimes \Dist(G)^{\wh\otimes 2}$ mapping $g\otimes h$ to $\Delta (h^{-1}g^{-1})=h^{-1}g^{-1}\otimes h^{-1}g^{-1}$. Let us show that the RHS has the same property. We have
\be
(\CR^{21}\CR)^{-1}\theta\otimes \theta = (S\CR^{(1)}\otimes \CR^{(2)})\cdot (\CR^{(2)'}\otimes S\CR^{(1)'})\cdot (\theta^{(1)}\otimes \theta^{(1)'})\cdot (\theta^{(2)}\otimes \theta^{(2)'})
\ee
in which $\theta^{(1)}\otimes\theta^{(1)'}\in \CO_G\otimes \CO_G$ and $\theta^{(2)}\otimes\theta^{(2)'}\in \Dist (G)\otimes \Dist (G)$. We need to commute $\CR^{(1)}$ pass $\theta^{(1)}$ in the above, to interprete this as an element in $\CO_G^{\otimes 2}\wh\otimes \Dist(G)^{\wh\otimes 2}$. This is easy to do, and is given by
\be
(\CR^{21}\CR)^{-1}\theta\otimes \theta =\lp S\CR^{(1)}\CR^{(21)'}\rhd \theta^{(1)}\otimes (\CR^{(21)}\rhd S\CR^{(1)'}\theta^{(1)'})\rp \cdot \lp \CR^{(22)'}\theta^{(2)}\otimes \CR^{(22)}\theta^{(2)'}\rp. 
\ee
Now we can pair this with an element of the form $g\otimes h$, which gives
\be
\begin{aligned}
&\lp (ghg^{-1})^{-1}\rhd \theta^{(1)}(g)\otimes \lp g^{-1}\rhd \theta^{(1)'}(h)\rp \rp \cdot \lp (gh^{-1}g^{-1})\theta^{(2)}\otimes g^{-1}\theta^{(2)'}\rp\\
& = \lp \theta^{(1)}(ghgh^{-1}g^{-1})\otimes \theta^{(1)'}(ghg^{-1})\rp \cdot \lp  (g^{-1}h^{-1}g)\theta^{(2)}\otimes g^{-1}\theta^{(2)'}\rp \\
&=(gh^{-1}g^{-1}) ghg^{-1}h^{-1}g^{-1}\otimes g^{-1} g h^{-1}g^{-1}=h^{-1}g^{-1}\otimes h^{-1}g^{-1}
\end{aligned}
\ee
This perfectly match the result from $\Delta (\theta)$. Therefore $\Delta (\theta)=(\CR^{21}\CR)^{-1}\theta\otimes \theta$, and the proof is complete.

\end{proof}

\begin{Rem}
    Note that the relation $S\theta=\theta$ does not hold. In fact, if the module $V$ of $D(\CO_G)$ is infinite dimensional, then $S\theta$ might not be well-defined. For finite-dimensional modules, the existence of $\theta$ allows one to define a different rigidity structure, so that $\theta_{V^*}=\theta_V^*$ does hold. 
\end{Rem}

\bibliographystyle{alpha}

\bibliography{HC}

\end{document}